\newcommand{\ad}{\operatorname{ad}}
\newcommand{\Hom}{\operatorname{Hom}}
\newcommand{\End}{\operatorname{End}}
\newcommand{\Frob}{\operatorname{Frob}}
\newcommand{\ext}{\operatorname{Ext}}
\newcommand{\id}{\operatorname{id}}
\newcommand{\Ind}{\operatorname{Ind}}
\newcommand{\N}{\operatorname{N}}
\newcommand{\ind}{\operatorname{ind}}
\newcommand{\isom}{\stackrel{\sim}{\rightarrow}}
\newcommand{\im}{\operatorname{im}}
\newcommand{\SL}{\mathrm{SL}}
\newcommand{\GL}{\mathrm{GL}}
\newcommand{\R}{\mathrm{R}}
\newcommand{\val}{\mathrm{val}}
\newcommand{\surj}{\twoheadrightarrow}
\newcommand{\inj}{\hookrightarrow}
\newcommand{\comment}[1]{}
\renewcommand{\(}{\left(}
\renewcommand{\)}{\right)}
\newtheorem{thm}{Theorem}[section]
\numberwithin{thm}{subsection}
\newtheorem{lem}[thm]{Lemma}
\newtheorem{prop}[thm]{Proposition}
\newtheorem{corr}[thm]{Corollary}
\newtheorem{ax}[thm]{Axiom}
\theoremstyle{remark}
\newtheorem{remark}[thm]{Remark}
\theoremstyle{definition}
\newtheorem{example}[thm]{Example}
\numberwithin{equation}{subsection}
\def\NN{\mathbf N}
\def\ZZ{\mathbf Z}
\def\QQ{\mathbf Q}
\def\FF{\mathbf{F}}
\def\AA{\mathbf{A}}
\def\GG{\mathbf{G}}
\def\BB{\mathbf{B}}
\def\TT{\mathbf{T}}
\def\UU{\mathbf{U}}
\def\rarrow{\rightarrow}
\def\cS{\mathcal{S}}
\def\cO{\mathcal{O}}
\def\cG{\mathcal{G}}
\def\cM{\mathcal{M}}
\def\cE{\mathcal{E}}
\def\cH{\mathcal{H}}
\def\rbar{\overline{r}}
\def\rhobar{\overline{\rho}}
\def\psibar{\overline{\psi}}
\def\pibar{\overline{\pi}}
\def\taubar{\overline{\tau}}
\def\chibar{\overline{\chi}}
\def\etabar{\overline{\eta}}
\def\Pbar{\overline{P}}
\def\Qbar{\overline{\mathbf{Q}}}
\def\tld{\widetilde}
\DeclareMathAlphabet{\mathpzc}{OT1}{pzc}{m}{it}
\title{Lattices in the cohomology of $U(3)$ arithmetic manifolds}
\author{Daniel Le}
\begin{document}

\begin{abstract}
Under hypotheses required for the Taylor--Wiles method, we prove for forms of $U(3)$ which are compact at infinity that the lattice structure on upper alcove algebraic vectors or on principal series types given by the $\lambda$-isotypic part of completed cohomology is a local invariant of the Galois representation attached to $\lambda$ when this Galois representation is residually irreducible locally at places dividing $p$.
As a crucial input, we establish corresponding mod $p$ multiplicity one results.
Our main innovation is the combination of integral Hecke theory and the Taylor--Wiles method.
\end{abstract}

\maketitle

\section{Introduction}

\subsection{Background and the main result}

Our goal is to prove a $p$-adic local-global compatibility result.
Let $K/\QQ_p$ be a finite extension and $\rho:G_K \rarrow \GL_n(\Qbar_p)$ a regular de Rham Galois representation.
Let $\pi$ be the smooth $\GL_n(K)$-representation corresponding to the Weil--Deligne representation $\mathrm{WD}(\rho)$ via the local Langlands correspondence, and let $V$ be the algebraic $\GL_n(K)$-representation corresponding to the Hodge--Tate weights of $\rho$.
If $n=2$ and $K=\QQ_p$, the $p$-adic Langlands correspondence for $\GL_2(\QQ_p)$, initiated in \cite{Breuilstart} and established in  \cite{MR2642409,MR3150248,CDP}, attaches to $\rhobar$ a $p$-adic Banach space completion of $\pi \otimes V$.
Moreover, the correspondence satisfies \emph{$p$-adic local-global compatibility} by \cite{Emerton}, that is, the above completion coincides with the completion found in the $r$-part of completed cohomology of modular curves if $r:G_\QQ \rarrow \GL_2(\Qbar_p)$ is a modular Galois representation whose restriction $r|_{D_p}$ is isomorphic to $\rho$.

If $n>2$ or $K\neq\QQ_p$, there is increasing evidence, though no definitive conjecture, for an analogous correspondence for $\GL_n(K)$ (see \cite{MR2827792}).
However, if $\rho$ is the local restriction $r|_{D_v}$ of a modular Galois representation $r:G_F \rarrow \GL_n(\Qbar_p)$ of a CM field $F$, then the completed cohomology of $U(n)$ arithmetic manifolds gives a natural integral structure, and thus a norm, on $\pi \otimes V$.
Though this norm is of global origin, it is natural to hope for $p$-adic local-global compatibility results relating this norm to $\rho$, and thus informing our understanding of the hypothetical correspondence.
In recent years, significant progress on $p$-adic local-global compatibility has been made in the case when $n=2$ and $K/\QQ_p$ is an unramified extension, most notably the proof in \cite{1305.1594} of \cite[Conjecture 1.2]{breuil}.
If $\cO_K$ denotes the ring of integers of $K$, then one can study $\GL_n(K)$-representations by studying certain $\GL_n(\cO_K)$-subrepresentations called $\GL_n(\cO_K)$-types.
This is a powerful tool in the smooth representation theory of $p$-adic groups (see \cite{Bushnell}), and it is natural to import it to the study of $p$-adic representations.
\cite{1305.1594} proves a $p$-adic local-global compatibility result for the integral structure on generic tame $\GL_n(\cO_K)$-types.
We prove analogous results for generic tame principal series $\GL_3(\ZZ_p)$-types in supersingular cases, to our knowledge, the first local-global compatibility results in the $p$-adic Langlands program for a group of semisimple rank greater than one.

We now explain our generalization in more detail.
We now take $n$ to be $3$.
Let $F$ be a CM field with totally real subfield $F^+$ and assume that $F/F^+$ is unramified at all finite places.
Suppose that $p\neq 2$ splits completely in $F$.
Let $E$ be a finite extension of $\QQ_p$ with ring of integers $\cO_E$, uniformizer $\varpi_E$, and residue field $k_E$ such that $r$ is defined over $E$.
Suppose for simplicity that $r$ is unramified outside of $p$ (see Theorems \ref{mainalg} and \ref{mainps} for weaker hypotheses).
Let $\lambda$ be the Hecke eigensystem corresponding to $r$.
Consider an integral model $\UU_{/\cO_{F^+}}$, which is reductive at all split places, for a form of $U(3)$ over $F^+$ that splits over $F$, is quasisplit at all finite places, and is compact at infinity. 
Let $\widehat{H}^0$ be the completed cohomology with $\mathcal{O}_E$-coefficients of the associated arithmetic manifold (with tame level as described in \S \ref{aut}) and assume that $\widehat{H}^0[\lambda] \neq 0$.
For each place $v|p$ of $F^+$, choose a place $\tld{v}|v$ of $F$.
Let $V_{\tld{v}}$ be the algebraic representation corresponding to the Hodge--Tate weights of $r|_{D_{\tld{v}}}$ and $\tau_{\tld{v}} \subset \pi_{\tld{v}}$ be the $\GL_3(\cO_{F_{\tld{v}}})$-type and $\GL_3(F_{\tld{v}})$-representation, respectively, corresponding to $\mathrm{WD}(r|_{D_{\tld{v}}})$ by the (inertial) local Langlands correspondence as in \cite[Proposition 6.5.3]{MR2656025}.
Since
\[\Hom(\otimes_{v|p} \pi_{\tld{v}}\otimes V_{\tld{v}}, \widehat{H}^0[\lambda] \otimes_{\cO_E} E) \cong \Hom(\otimes_{v|p} \tau_{\tld{v}} \otimes V_{\tld{v}},\widehat{H}^0[\lambda] \otimes_{\cO_E} E)\]
(as $\prod_{v|p} \UU(F^+_v)$ and $\prod_{v|p} \UU(\cO_{F^+_v})$-representations, respectively, under the natural isomorphism $\iota_{\tld{v}}: \UU(F_v^+) \isom \GL_3(F_{\tld{v}})$) is one-dimensional over $E$ by \cite[Theorems 5.4 and 5.9]{MR2856380},
the intersection $(\otimes_{v|p} \tau_{\tld{v}} \otimes V_{\tld{v}} \otimes_{\QQ_p} E) \cap \widehat{H}^0[\lambda]$ gives a homothety class of lattices in $\otimes_{v|p} \tau_{\tld{v}}\otimes V_{\tld{v}} \otimes_{\QQ_p} E$.
In this paper we will study the following two situations: either $r|_{D_{\tld{v}}}$ is crystalline with upper alcove Hodge--Tate weights, in which case $\tau_{\tld{v}}$ is trivial and $V_{\tld{v}}$ is what we will call an upper alcove algebraic representation (see \S \ref{notation}); or
else $r|_{D_{\tld{v}}}$ is potentially crystalline of principal series Galois type, in which case $\tau_{\tld{v}}$ is a
principal series type and $V_{\tld{v}}$ is trivial.
The following is our main result.

\begin{thm} \label{mainthm}
Suppose that for all places $v|p$ of $F^+$, $r|_{D_{\tld{v}}}$ is crystalline of Hodge--Tate weights $(c_{\tld{v}}+p+1,b_{\tld{v}}+1,a_{\tld{v}}-p+1)$ or potentially crystalline at all places $v|p$ of Hodge--Tate weights $(0,1,2)$ and type $\eta^{-a_{\tld{v}}} \oplus \eta^{-b_{\tld{v}}} \oplus \eta^{-c_{\tld{v}}}$.
Suppose that $\rbar$ satisfies the Taylor--Wiles conditions, 
$\rbar|_{G_{F_{\widetilde{v}}}}$ is irreducible for places $v|p$, and that $a_{\tld{v}}-b_{\tld{v}}>6$, $b_{\tld{v}}-c_{\tld{v}}>6$, and $a_{\tld{v}}-c_{\tld{v}}<p-5$.
Then the homothety class of the lattice $(\otimes_{v|p} \tau_{\tld{v}} \otimes V_{\tld{v}} \otimes_{\QQ_p} E) \cap \widehat{H}^0[\lambda]$ in $\tau_{\tld{v}}\otimes V_{\tld{v}} \otimes_{\QQ_p} E$ can be described in terms of the crystalline Frobenius eigenvalues of $r|_{D_{\tld{v}}}$ $($see Theorems \ref{mainalg} and \ref{mainps} for the precise description$)$.
\end{thm}

Note that in the crystalline case with lower alcove Hodge--Tate weights, there is only one lattice up to homothety, and so the analogous result is trivial.
In the $\GL_2(\QQ_p)$ crystalline case, an analogous relationship between lattices in algebraic vectors and Hecke eigenvalues is used in \cite{BG,BG13,BG15,BGR} to compute the reduction of two-dimensional crystalline representations.
See Corollary \ref{corr:val} for a partial result in this direction for $\GL_3(\QQ_p)$.
In the unramified $\GL_2$ tame principal series case, \cite{BP,breuil} use analogous results to study the representations occuring in the mod $p$ cohomology of Shimura curves.
It is natural to hope for a similar theory for the $U(3)$ arithmetic manifolds that we study.

\subsection{Methods and an overview of the paper} \label{overview}

The first step, which is the subject of \S \ref{sec:lattice}, is to classify lattices in $\tau_{\tld{v}}\otimes V_{\tld{v}}$.
For the relevant algebraic representations, this follows from \cite[Part II]{MR2015057}.
For generic tame principal series types, there are partial classifications of lattices in the literature, e.g. in \cite[\S 8]{MR0396731} and  \cite{MR759299}, using classical intertwiners for principal series.
To get a full classification, it turns out to be sufficient to compute the submodule structure of the reduction of natural lattices in principal series types, which we compute using results of \cite{MR2845621}.
The next step, which is the subject of \S \ref{hecke}, is to isolate certain integral Hecke operators and show how they determine invariants of lattices.
The computation of these integral Hecke operators can be seen as an extension of some results of \cite{MR2845621}.
Finally, we combine our representation theoretic results with recent modifications of the Taylor--Wiles method (\cite{MR1333036}).
Kisin's local-global modification (\cite{MR2600871}) provides a natural setting to compare the local and global Langlands correspondences integrally and in families.
More specifically, we prove Theorem \ref{mainthm} by studying a patching functor applied to maps between lattices.
The modification in \cite{1310.0831} allows us to describe these maps in terms of Hecke operators.
We describe the necessary results of \cite{1310.0831} in \S \ref{patching}.
In \S \ref{cyclic}, we take the geometric perspective of \cite{MR3134019} and \cite{1305.1594} and prove Theorem \ref{mainthm} in families where one can use that patched modules are Cohen--Macaulay.
The Cohen--Macaulay property is used in two essential ways.
First, we use our representation theoretic results to augment (see Theorems \ref{cyclicalg} and \ref{cyclicps}) the argument of \cite{MR1440309,math/0602606} that uses the Auslander--Buchsbaum formula to prove mod $p$ multiplicity one results.
Second, we use that a generically vanishing submodule of a Cohen--Macaulay module is in fact zero to show that a single Hecke operator determines lattices in the crystalline case.
Finally, in \S \ref{lattice}, we use classical local-global compatibility to compute Hecke eigenvalues, and thus lattices in cohomology, from crystalline Frobenius eigenvalues of $r|_{D_{\tld{v}}}$.

\subsection{Acknowledgments}

This work grew out of my thesis.
I am deeply indebted to my adviser Matthew Emerton for guidance, support, and numerous conversations about patching and the $p$-adic Langlands program.
I thank Florian Herzig for correspondence regarding Proposition \ref{submodule}.
I thank Christophe Breuil for sharing a letter about lattice conjectures for $\GL_3(\QQ_p)$ which in addition to \cite{breuil}
inspired this paper.
This paper also owes a debt to ideas (some unpublished and some in \cite{MR3079258,1305.1594,1310.0831}) of Ana Caraiani, Matthew Emerton, Toby Gee, David Geraghty, Florian Herzig, Vytautas Paskunas, David Savitt, and Sug Woo Shin, and it is a pleasure to acknowledge this.
We thank Matthew Emerton, Toby Gee, Florian Herzig, and Bao V. Le Hung for comments on previous drafts of this paper.
Finally, we heartily thank the referee for several comments which improved the exposition and accuracy of the paper.

\subsection{Notation} \label{notation}

For a field $k$, $G_k$ denotes the absolute Galois group of $k$. We denote the cyclotomic character by $\epsilon$. Hodge--Tate weights are normalized so that $\epsilon$ has weight $-1$.
We denote by $F^+ \subset F$ the maximal totally real subfield of a CM field.
The letter $v$ is used to denote places of $F^+$ while the letter $w$ is used to denote places of $F$.
Further, $\tld{v}$ denotes a place of $F$ dividing a place $v$ of $F^+$.
We use $E$ to denote a finite extension of $\QQ_p$ with ring of integers $\cO_E$, uniformizer $\varpi_E$, and residue field $k_E$.

Compact induction is denoted $\ind_H^G$, while usual induction is denoted $\Ind_H^G$. The symbols $(\cdot)^\vee$ and $(\cdot)^d$ are used to denote the Pontriagin dual and Schikhof dual, respectively (see \cite[\S 1.8]{1310.0831}). The symbol $(\cdot)^*$ is used to denote the contragredient of a representation.

Let $\BB$ and $\TT\subset {\GL_n}_{/\ZZ}$ be the algebraic subgroups of upper triangular and diagonal matrices, respectively.
As usual, the group of characters (resp.~cocharacters) of $\TT$ are denoted by $X^*(\TT)$ (resp.~$X_*(\TT)$).
These groups are identified with $\ZZ^n$ in the usual way.
Further subscripts with $+$ (resp.~$-$) denote dominant (resp.~antidominant) characters and cocharacters.
For a dominant character $\nu \in X^*(\TT)_+$, we say that $\nu$ is $p$-restricted if $0\leq \langle \nu,\alpha^\vee\rangle<p$ for all simple positive roots $\alpha$.
If $\nu\in X^*(\TT)_+$ is a dominant and $p$-restricted character, let $W(\nu)$ be the $\ZZ_p$-points of the algebraic $\GL_n$-representation $\Ind_\BB^{\GL_n} w_0 \nu$, where $w_0$ denotes the longest element of $S_n$.
Let $\overline{W}(\nu)$ be the reduction of $W(\nu)$ modulo $p$, and let $F(\nu)$ be the (irreducible) socle of $\overline{W}(\nu)$.
These characteristic $p$ representations factor through $\GL_n(\FF_p)$.
Every (absolutely) irreducible representation of $\GL_n(\FF_p)$ is isomorphic to $F(\nu)$ for some $\nu$, and the representations $F(\mu)$ and $F(\nu)$ are isomorphic if and only if $\mu \equiv \nu \pmod{p-1}$ (see \cite[Theorem 3.10]{herzig}).
The representations $F(\nu)$ are called (\emph{Serre}) \emph{weights} (of $\GL_n(\FF_p)$).

Usually, we will take $n$ above to be $3$.
We label the elements of $S_3$ as $e=()$, $s_1 = (12)$, $s_2 = (23)$, $r_1 = (123)$, $r_2 = (132)$, and $w_0 = (13)$.
The ordered pair $(i,j)$ often denotes some permutation of the ordered pair $(1,2)$.
Let $(x,y,z) \in X^*(\TT)_+$.
The character $(x,y,z)$ is in the \emph{lower alcove} (resp.~\emph{upper alcove}) if $x-z<p-2$ (resp.~$x-y,y-z<p-1$ and $p-2<x-z$) (see the paragraph following \cite[Corollary 4.8]{herzigthesis}).
We say that $F(\nu)$ is a lower (resp.~upper) alcove Serre weight if $\nu$ is a lower (resp.~upper) alcove character.

The triple $(a,b,c)$ with $a>b>c$ will be used to denote the highest weight of an algebraic representation of $\GL_3(\FF_p)$.
We will assume throughout (with the exception of \S \ref{latps}) that $a-c < p$ and usually that $a-c < p-1$.
In \S \ref{lattice}, we assume that $a-b>6$, $b-c>6$, and $a-c< p-5$ (the {\it strong genericity} hypothesis of \cite{MR3079258}) to apply \cite[Theorem 7.5.5]{MR3079258}.

By \cite[Theorem 2.8 and \S 4]{MR916175}, we have the following results.
Note that \cite{MR916175} describes $\SL_3(\FF_p)$-representations, but the proofs carry over verbatim to the $\GL_3(\FF_p)$-setting.
Alternatively, the results in the $\GL_3(\FF_p)$-setting can be easily deduced from the $\SL_3(\FF_p)$-setting via the inflation-restriction exact sequence.
If $\sigma$ and $\sigma'$ are weights, then 
\[\dim_{\FF_p} \ext^1_{\GL_3(\FF_p)}(\sigma',\sigma) = \dim_{\FF_p} \ext^1_{\GL_3(\FF_p)}(\sigma,\sigma') \leq 1.\]
If $F(x,y,z)$ is a weight, let $\cE(x,y,z)$ be the set of triples $(a,b,c)$ such that 
\[\ext^1_{\GL_3(\FF_p)}(F(x,y,z),F(a,b,c)) \neq 0.\]
If $(x,y,z)$ is a lower alcove character, then 
\begin{align*}
\cE(x,y,z) := \{&(y+p-1,x,z), (x,z,y-p+1), (z+p-2,y,x-p+2), \\
&(x+1,z-1,y-p+1), (y+p-1,x+1,z-1),(x,z-1,y-p+2), \\
&(y+p-2,x+1,z) \}.
\end{align*}
Moreover, $\ext^1_{\GL_3(\FF_p)}(\sigma',\sigma) = 0$ if both $\sigma$ and $\sigma'$ are upper alcove weights.

\section{Lattices in locally algebraic representations} \label{sec:lattice}
Let $\tau$ be either an upper alcove algebraic representation or a sufficiently generic principal series
type over E.
Then $\tau$ is residually multiplicity free in the sense that the semisimplification $\taubar^{\mathrm{ss}}$ of the reduction of any lattice in $\tau$ is multiplicity free.
By \cite[Lemma 4.1.1]{1305.1594}, this condition implies that for each Jordan--H\"older factor $\sigma$ of $\taubar^{\mathrm{ss}}$, there is up to homothety a unique lattice $\tau^\sigma$ in $\tau$ with cosocle isomorphic to $\sigma$.
In this section we make a detailed study of the lattices $\tau^\sigma$ (and the relations between them for varying $\sigma$) for upper alcove algebraic representations (\S \ref{algvect}) and principal series types (\S \ref{latps}). In \S \ref{sec:morita}, we use Morita theory to explain how this information leads to a classification of lattices in these representations.

\subsection{Lattices in algebraic vectors} \label{algvect}

In this section, we define two natural integral structures on upper alcove algebraic vectors.
We follow the notation of \cite[\S 3]{herzig}.
Let $K$ be $\GL_3(\ZZ_p)$ and $K_1$ be the kernel of the reduction map $\GL_3(\ZZ_p) \surj \GL_3(\FF_p)$.

Fix $a>b>c$ integers such that $a-c<p$. Note that $(a-1,b,c+1)$ is a lower alcove character.
Let $\nu$ be the ($p$-restricted) upper alcove character $(c+p-1,b,a-p+1)$.
Throughout this paper, we will denote $W(\nu)$ and $W(-w_0 \nu)^*$ by $W$ and $W^0$, respectively, where $\cdot^*$ denotes the contragredient representation.
By Serre duality for $\GL_3/\BB$, there is an isomorphism $W^0 \cong\R^3 \Ind_\BB^{\GL_3} (\nu - 2\rho)(\ZZ_p)$ where $2\rho$ is the sum of the positive roots of $\GL_3$ (see \cite[Chapter II.4(8)]{MR2015057}).

Let $V = W[p^{-1}] = \Ind_\BB^{\GL_3} w_0 \nu(\QQ_p)$.
By the Borel-Weil-Bott theorem, $V \cong \R^3 \Ind_\BB^{\GL_3} (\nu - 2\rho)(\QQ_p) \cong W^0[p^{-1}]$ (see \cite[II.5.3]{MR2015057}), and so $W^0$ is (isomorphic to) a lattice in $V$.
The next proposition shows that the lattices $W$ and $W^0$ are the two lattices up homothety with irreducible cosocle. 
Let $\overline{W}$ and $\overline{W}^0$ be the reductions modulo $p$ of $W$ and $W^0$, respectively. Note that $K_1$ acts trivially on $\overline{W}$ and $\overline{W}^0$.

\begin{prop} \label{socle}
There are nonsplit exact sequences
\[ 0 \rarrow F(c+p-1,b,a-p+1) \rarrow \overline{W} \rarrow F(a-1,b,c+1) \rarrow 0\]
and
\[ 0 \rarrow F(a-1,b,c+1) \rarrow \overline{W}^0 \rarrow F(c+p-1,b,a-p+1) \rarrow 0\]
of $K$-representations.
\begin{proof}
The first exact sequence follows from \cite[Proposition 4.9]{herzigthesis}. The nonsplitness follows from \cite[Theorem 4.1]{herzigthesis}.
The second nonsplit exact sequence follows similarly after applying contragredients, where we use that the contragredient of $F(x,y,z)$ is $F(-z,-y,-x)$.
\end{proof}
\end{prop}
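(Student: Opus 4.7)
The plan is to reduce the proposition to standard facts about dual Weyl modules for $\GL_3$ in characteristic $p$, collected in Herzig's thesis. First I would note that by construction $\overline{W}$ is the reduction mod $p$ of $\Ind_\BB^\GG s_0\nu(\ZZ_p)$ for $\nu = (c+p-1, b, a-p+1)$, which is a dual Weyl module of $\GG/_{\FF_p}$. Under the hypothesis $a-c < p$, the weight $(a-1,b,c+1)$ is lower alcove for $\GL_3$ in the sense following Corollary 4.8 of \cite{herzigthesis}, and $(c+p-1,b,a-p+1)$ is precisely the reflection of this lower alcove weight across the upper wall of the lowest $p$-restricted alcove. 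So the two weights appearing in the statement form a single linkage class meeting the dominant chamber in exactly two points.

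Next I would apply Proposition 4.9 of \cite{herzigthesis}, which describes the Jordan-Hölder constituents of the mod $p$ dual Weyl module with upper alcove highest weight in this range: these are exactly $F(c+p-1,b,a-p+1)$ and $F(a-1,b,c+1)$, each appearing with multiplicity one. Since $\overline{W}$ is a dual Weyl module, its socle is simple of the same highest weight, namely $F(c+p-1,b,a-p+1)$ (one can alternately read this off directly from the linkage-class description). The existence and nonsplitness of the first exact sequence are therefore immediate: the socle is simple and must be the subobject, and the quotient is then forced to be the remaining composition factor $F(a-1,b,c+1)$. Alternatively, nonsplitness can be read off from the computation of $\ext^1$-groups between these specific weights in Theorem 4.1 of \cite{herzigthesis}.

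Finally, for the second exact sequence I would invoke duality. By definition $W^0 = W(-s_0\nu)^*$, so applying the first part of the proposition to the weight $-s_0\nu$ produces a nonsplit sequence whose socle is the simple module of top weight $-s_0\nu$ modulo $p-1$. Taking $\FF_p$-linear duals (which reverses the direction of the sequence) and using the identity $F(x,y,z)^* \cong F(-z,-y,-x)$ to translate weights, one arrives at the stated sequence for $\overline{W}^0$ with $F(a-1,b,c+1)$ as socle and $F(c+p-1,b,a-p+1)$ as cosocle. The main obstacle here is just careful bookkeeping: matching conventions for $s_0$-twists, checking that the reflection sending the lower alcove weight to its upper alcove partner is preserved by the duality $(x,y,z)\mapsto (-z,-y,-x)$, and confirming that nonsplitness survives dualization (which it automatically does, as the dual of a nonsplit extension is nonsplit).
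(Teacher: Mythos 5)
Your proposal is correct and takes essentially the same route as the paper: the first exact sequence and its two composition factors come from Proposition 4.9 of \cite{herzigthesis}, nonsplitness from the simplicity of the socle of the dual Weyl module (equivalently Theorem 4.1 of \cite{herzigthesis}), and the second sequence by dualizing and using $F(x,y,z)^* \cong F(-z,-y,-x)$. The additional alcove/linkage bookkeeping you spell out is just an expanded version of the same argument.
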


Fix an injection $i: W^0\inj W$ that is nonzero modulo $p$ (see \cite[II.8.13]{MR2015057} for particular choices) and let $i^\vee:W \inj W^0$ be the unique map that is nonzero modulo $p$ such that $i\circ i^\vee$ is a power of $p$.

\begin{prop} \label{alglat}
The composition $i\circ i^\vee$ is $p$.
\begin{proof}
This follows from \cite[II.8.15 (3)(4)]{MR2015057}.
We take $G$ to be $\GL_3$ so that $H^i$ in \emph{loc.~cit.}~is $\R^i \Ind_\BB^{\GL_3}$.
We take $\mu$ to be $w_0\nu$ so that $w_0 \cdot \mu = \nu - 2\rho$.
Let $\alpha_1$ and $\alpha_2$ be $(1,-1,0)$ and $(0,1,-1)$, respectively.
Then $s_{\alpha_i}$ in \emph{loc.~cit.}~is $s_i$.
Since $\tld{T}_\alpha(s_\alpha\cdot\mu) \circ \tld{T}_\alpha(\mu)$ is multiplication by $(\langle\mu,\alpha^\vee\rangle)!$ for all $\mu$, we see that $\tld{T}_{\alpha_1}(\mu)$ and $\tld{T}_{\alpha_1}(s_2s_1 \cdot \mu)$ are isomorphisms while $\tld{T}_{\alpha_2}(s_2 s_1\cdot \mu) \circ \tld{T}_{\alpha_2}(s_1\cdot \mu)$ is, up to a unit, multiplication by $p$.
As $\tld{T}_\alpha(\mu)$ is nonzero for all $\mu$, we can take $i = \tld{T}_{\alpha_1}(s_1 \cdot \mu) \circ \tld{T}_{\alpha_2}(s_2s_1 \cdot \mu) \circ \tld{T}_{\alpha_1}(w_0 \cdot \mu)$ and $i^\vee$ to be a unit multiple of $\tld{T}_{\alpha_1}(s_2s_1 \cdot \mu) \circ \tld{T}_{\alpha_2}(s_1\cdot \mu) \circ \tld{T}_{\alpha_1}(\mu)$.
This gives the desired result.
\end{proof}
\end{prop}

\subsection{Lattices in principal series types} \label{latps}

In this section, we classify lattices in generic principal series types which are residually multiplicity free.

\subsubsection{Principal series types in characteristic $p$}\label{subsubsec:mod p}

We describe the submodule structure of principal series types in characteristic $p$.
We begin with the submodule structure of inductions from the minimal parabolic subgroups
$P_1 = \Bigl\{\Bigl(\begin{smallmatrix}
*&*&*\\ *&*&* \\ 0&0&*
\end{smallmatrix} \Bigr)\Bigr\}, P_2 = \Bigl\{\Bigl(\begin{smallmatrix}
*&*&*\\ 0&*&* \\ 0&*&*
\end{smallmatrix} \Bigr)\Bigr\}$ of $\GL_3(\FF_p)$, strengthening \cite[Lemma 6.1.1]{MR3079258}.

For $s \in S_3$, let $s(a,b,c)$ be a dominant $p$-restricted character congruent mod $p-1$ to the $s$-permutation of $(a,b,c)$ (e.g.~if $(a,b,c)$ is $p$-restricted, then $r_1(a,b,c) = (c+p-1,a,b)$).

\begin{prop}\label{prop:parind}
Let $a,b,$ and $c$ be integers such that $0<a-b<p-1$ and $0<b-c<p-1$.
\begin{enumerate}
\item The socle and cosocle of $\Ind_{P_1}^{\GL_3(\FF_p)} F(a,b)\otimes F(c)$ are isomorphic to $F(r_1(a,b,c))$ and $F(a,b,c)$, respectively.
The socle and cosocle filtration have three nonzero graded pieces.
\item The socle and cosocle of $\Ind_{P_2}^{\GL_3(\FF_p)} F(a)\otimes F(b,c)$ are isomorphic to $F(r_2(a,b,c))$ and $F(a,b,c)$, respectively.
The socle and cosocle filtration have three nonzero graded pieces.
\end{enumerate}
\end{prop}
\begin{proof}
We prove only the first item as the proof of the second item is similar.
For $i = 1$ and $2$, let $N_i \subset P_i$ be the maximal normal $p$-subgroup, and $M_i = P_i/N_i$.
We begin by calculating the cosocle of $\Ind_{P_1}^{\GL_3(\FF_p)} F(a,b)\otimes F(c)$.
By Frobenius reciprocity, 
\[\Hom_{\GL_3(\FF_p)}(\Ind_{P_1}^{\GL_3(\FF_p)} F(a,b)\otimes F(c),F(x,y,z)) \cong \Hom_{M_1} (F(a,b)\otimes F(c),F(x,y,z)^{N_1}).\]
As $F(x,y)\otimes F(z) \subset F(x,y,z)^{N_1}$, and the latter is irreducible by \cite[Lemma 2.5(i)]{MR2845621}, we have that $F(x,y,z)^{N_1}\cong F(x,y)\otimes F(z)$.
Thus the $\Hom$-space above is nonzero if and only if it is one-dimensional and $F(x,y,z) \cong F(a,b,c)$.

The socle computation is similar.
By Frobenius reciprocity, 
\[\Hom_{\GL_3(\FF_p)}(F(x,y,z),\Ind_{P_1}^{\GL_3(\FF_p)} F(a,b)\otimes F(c)) \cong \Hom_{M_1} (F(x,y,z)_{N_1},F(a,b)\otimes F(c)).\]
By \cite[Lemma 2.5(ii)]{MR2845621}, $F(x,y,z)_{N_1} \cong F(x,y,z)^{\overline{N}_1}$.
Since $r_1 \overline{N}_1 r_1^{-1}=N_2$ and $F(x,y,z)^{N_2} \cong F(x)\otimes F(y,z)$, we see that $F(x,y,z)^{\overline{N}_1} \cong F(y,z)\otimes F(x)$.
Arguing as in the previous paragraph, we see that $(a,b,c) \equiv (y,z,x) \mod{(p-1)}$ and that $F(x,y,z) = F(c+p-1,a,b)$.

The remaining Jordan--H\"older factors of $\Ind_{P_1}^{\GL_3(\FF_p)} F(a,b)\otimes F(c)$ can be determined by the analogue of \cite[Lemma 6.1.1]{MR3079258}.
The crucial observation is that the remaining Jordan--H\"older factors do not admit extensions between them since they all lie in the same alcove.
Thus the socle and cosocle filtrations have exactly three nonzero graded pieces.
\end{proof}

Let $I \subset K$ (resp. $I_1\subset K$) be the inverse image of $\BB(\FF_p) \subset \GL_3(\FF_p)$ (resp. maximal $p$-subgroup of $\BB(\FF_p)$) under the reduction map $K \surj \GL_3(\FF_p)$.
Let $\etabar: \FF_p^\times \rarrow \FF_p^\times$ be the identity character.
For integers $a,b,$ and $c$, we consider the character $\chibar = \etabar^a \otimes \etabar^b \otimes \etabar^c$ of $(\FF_p^\times)^3$, which we view as a character of $I$ by inflation via the usual map $I \rarrow I/I_1 \isom (\FF_p^\times)^3$.
Let $\taubar$ be the $K$-representation $\ind_I^K \chibar$ over $\FF_p$.
We suppose that $a,b,$ and $c$ are integers such that $0<a-b<p-1$, $0<b-c<p-1$, and $a,b,$ and $c$ are distinct modulo $p-1$.
The following proposition describes the submodule structure of $\taubar$. Its proof was explained to us by Florian Herzig.

\begin{prop} \label{submodule} 
If $a-c<p-1$ or $a-c>p-1$, then the socle and cosocle filtrations of $\taubar$ agree and have associated graded pieces 
\begin{center}
\[F(a,b,c)\] \\
\[F(a,c,b-p+1)\oplus F(b+p-1,a,c)\] \\
\[F(c+p-1,a,b)\oplus F(b-1,c,a-p+2) \oplus F(a-1,b,c+1)\]
\[\oplus F(c+p-2,a,b+1) \oplus F(b,c,a-p+1) \] \\
\[F(c+p-1,b,a-p+1),\]
\end{center}
or
\begin{center}
\[F(a,b,c)\] \\
\[F(a,c+p-1,b)\oplus F(b+p-2,a,c+p) \oplus F(c+p-2,b,a-p+2)\]
\[\oplus F(a-p,c,b-p+2) \oplus F(b,a-p+1,c)\] \\
\[F(c+2p-2,a,b) \oplus F(b,c,a-2p+2) \] \\
\[F(c+p-1,b,a-p+1),\]
\end{center}
respectively, where any number of bottom rows are the graded pieces of a submodule. Furthermore, all nontrivial extensions that can occur do occur (see \S \ref{notation}).
\begin{proof}
As $a,b,$ and $c$ are distinct mod $p-1$, the Jordan--H\"older factors of $\tau^e$ are distinct by \cite[Proposition 3.1, Theorems 4.1 and 5.1]{herzigthesis} and Proposition \ref{socle}.
Exactly as in the proof of Proposition \ref{prop:parind}, one can use Frobenius reciprocity and \cite[Lemma 2.5]{MR2845621}, to show that the socle and cosocle of $\taubar$ are $F(c+p-1,b,a-p+1)$ and $F(a,b,c)$, respectively.
Hence, $F(c+p-1,b,a-p+1)$ and $F(a,b,c)$ are pieces of the associated graded for both the socle and cosocle filtrations.

We now assume that $a-c<p-1$.
The other case can be treated similarly, or by using duality.
By \cite[Lemma 6.1.1]{MR3079258}, there is a natural inclusion $\Ind_{P_1}^{\GL_3(\FF_p)} F(b+p-1,a) \otimes F(c) \subset \taubar$.
From Proposition \ref{prop:parind}, we see that in the socle filtration, $F(b-1,c,a-p+2),F(a-1,b,c+1), F(c+p-2,a,b+1),$ and $F(b,c,a-p+1)$ are in the layer above the socle $F(c+p-1,b,a-p+1)$, and $F(b+p-1,a,c)$ is in the next layer up as taking socles respects inclusions.
Furthermore, all possible nontrivial extensions between these weights occur.
The same argument applies to $\Ind_{P_2}^{\GL_3(\FF_p)} F(a) \otimes F(c,b-p+1)$.
The only remaining weight is the cosocle $F(a,b,c)$, which must be in the final layer of the socle filtration.
Furthermore, since the cosocle is isomorphic to $F(a,b,c)$, this weight must extend $F(b+p-1,a,c)$ and $F(a,c,b-p+1)$.
Finally, we observe that every irreducible factor of $\taubar$, aside from $F(a,b,c)$, is extended nontrivially by at least one irreducible
factor in row of the socle filltration immediately above it, which implies that the cosocle filtration coincides with the socle filtration.
\end{proof}
\end{prop}

\subsubsection{Integral structure on principal series types} \label{intps}
We begin by defining natural integral structures on principal series.
Let $\eta: \FF_p^\times \rarrow \ZZ_p^\times$ be the Teichmuller character.
For integers $a,b,$ and $c$, we consider the character $\chi = \eta^a \otimes \eta^b \otimes \eta^c$ of $(\FF_p^\times)^3$, which we view as a character of $I$ by inflation via the map $I \rarrow I/I_1 \isom (\FF_p^\times)^3$.
Let $\tau^e$ be the $K$-representation $\ind_I^K \chi$ over $\ZZ_p$, and let $\tau = \tau^e \otimes_{\ZZ_p} \QQ_p$.
As in \S \ref{subsubsec:mod p}, suppose that $a,b,$ and $c$ are distinct modulo $p-1$, so that in particular $\tau$ is absolutely irreducible.
By reordering $a,b,$ and $c$ and adding multiples of $p-1$, let us now assume without loss of generality that $a>b>c$ and $a-c<p-1$.
Let $\chi^s$ be the character with factors permuted by $s \in S_3$.
Let $e$ be the identity, $w_0 = (13)$, $r_1 = (123)$, $r_2 = (132)$, $s_1 = (12)$, and $s_2 = (23)$.
Then for example $\chi^{r_1} = \eta^c \otimes \eta^a \otimes \eta^b$ and $\chi^{r_2} = \eta^b \otimes \eta^c \otimes \eta^a$.

For each $s\in S_3$, consider the $K$-representation $\tau^s = \ind_I^K \chi^s$ over $\ZZ_p$.
The representations $\tau^s$ are lattices in the principal series types $\tau^s \otimes_{\ZZ_p} \QQ_p \cong \tau$.
Then by Proposition \ref{submodule} and \cite[Lemma 4.1.1]{1305.1594}, for each $s\in S_3$, the lattice $\tau^s$ is the unique lattice up to homothety with cosocle $F(s(a,b,c))$.
Having described the reductions of lattices in $\tau$ with cosocle $F(s(a,b,c))$ in Proposition \ref{submodule}, we now describe the submodule structure of $\taubar^1$, $\taubar^2$, and $\taubar^3$ where $\tau^1$, $\tau^2,$ and $\tau^3 \subset \tau$  are the unique lattices up to homothety with cosocle $F(b-1,c,a-p+2), F(a-1,b,c+1),$ and $F(c+p-2,a,b+1)$, respectively.

\begin{prop} \label{submodule1'}
For $i = 1,2,$ or $3$, the cosocle filtration of $\taubar^i$ has three associated graded pieces.
The quotient is an irreducible lower alcove weight, the next layer is the direct sum of the upper alcove weights in $\mathrm{JH}(\taubar^i)$, and the final layer is the direct sum of the remaining lower alcove weights in $\mathrm{JH}(\taubar^i)$.
For example, for $\taubar^1$, the associated graded pieces are
\begin{center}
\[ F(b-1,c,a-p+2)\] \\
\[F(b+p-1,a,c) \oplus F(a,c,b-p+1) \oplus F(c+p-1,b,a-p+1)\] \\
\[ F(a,b,c) \oplus F(b,c,a-p+1) \oplus F(c+p-1,a,b)\]
\[\oplus F(c+p-2,a,b+1) \oplus F(a-1,b,c+1).\]
\end{center}
\begin{proof}
We will prove the proposition for $\taubar^1$, the other cases being similar.
The weight $F(b-1,c,a-p+2)$ is in the top layer for both filtrations by construction.
Recall from the proof of \cite[Lemma 4.1.1]{1305.1594}, $\tau^1 \subset \tau^{w_0}$ is the minimal submodule such that the quotient $\tau^{w_0}/\tau^1$ does not contain $F(b-1,c,a-p+2)$ as a Jordan--H\"older factor.
We claim that the image of $\taubar^1$ in $\taubar^{w_0}$ is the minimal submodule $M$ of $\taubar^{w_0}$ containing $F(b-1,c,a-p+2)$ as a Jordan--H\"older factor.
It clearly must contain it.
On the other hand, the quotient of $\tau^{w_0}$ by the preimage of $M$ does not contain $F(b-1,c,a-p+2)$ as a Jordan--H\"older factor, and so the preimage of $M$ must contain $\tau^1$ by minimality of $\tau^1$.

Proposition \ref{submodule} shows that $F(a,c,b-p+1)$ and $F(b+p-1,a,c)$ are in the second layer of the cosocle filtration of $\im \taubar^1$.
Since formation of the radical submodule commutes with quotients, $F(a,c,b-p+1)$ and $F(b+p-1,a,c)$ are in the second layer of the cosocle filtration of $\taubar^1$.
Similarly, looking at the inclusion $\tau^1 \subset \tau^{s_1}$, we see that $F(c+p-1,b,a-p+1)$ is also in the second layer of the cosocle filtration of $\taubar^1$.
In fact, $F(a,c,b-p+1) \oplus F(b+p-1,a,c) \oplus F(c+p-1,b,a-p+1)$ is the second layer of the cosocle filtration since these weights are the only Jordan--H\"older factors of $\taubar^1$ that nontrivially extend $F(b-1,c,a-p+2)$ (see \S \ref{notation}).
The other five weights are all in the third layer of the cosocle filtration since there are no nontrivial extensions between them (see \S \ref{notation}).
\end{proof}
\end{prop}

For $s,s' \in S_3$, the intertwiner $\iota_{s's}^s:\ind_{I}^{K} \chi^{s's} \inj \ind_{I}^{K} \chi^s$ from the classical representation theory of $\GL_3(\FF_p)$ is nonzero modulo $p$ and induces isomorphisms $\tau^{s's} \otimes_{\ZZ_p} \QQ_p \isom \tau^s \otimes_{\ZZ_p} \QQ_p$, as we now recall.

Let $v^s \in \ind_{I}^{K} \chi^s$ be the function supported on $I$ defined by extending $\chi^s$ by $0$.
By Frobenius reciprocity, the intertwiner $\iota_{s's}^s$ is determined by the image of $v^{s' s}$.
Identifying $S_3$ with the group of permutation matrices in $\GL_3$, the intertwiner is given by
\begin{equation} \label{inter}
\iota_{s's}^s: v^{s ' s} \mapsto \sum_{g\in I_1/(s' I_1 s'^{-1} \cap I_1)} gs' v^s,
\end{equation}
which is easily checked to be nonzero modulo $p$.
Let $\ell$ denote the length function on $S_3$.

\begin{prop} \label{intercomp}
The composition $\iota_{s's}^{s''s's} \circ \iota_s^{s's}: \ind_{I}^{K} \chi^s \inj \ind_{I}^{K} \chi^{s''s's}$ is $p^{\frac{1}{2}(\ell(s'')+\ell(s') - \ell(s''s'))} \iota_s^{s''s's}$.
\begin{proof}
A more general result can be obtained from \cite[Propositions 3.6 and 3.10]{MR0396731}.
We provide a short summary.
Let the ordered pair $(i,j)$ be a permutation of $(1,2)$.
By induction, it suffices to show that $\iota_{s_i s}^{s_j s_i s} \circ \iota_s^{s_i s}=\iota_s^{s_js_is}$ and $\iota_{s_i s}^{s_i s_i s}\circ \iota_s^{s_i s} =p$.

Note that $\cup_{g\in I_1/(s' I_1 s'^{-1} \cap I_1)} gs'I_1 = I_1s'I_1$.
To prove that $\iota_{s_i s}^{s_j s_i s} \circ \iota_s^{s_i s}=\iota_s^{s_js_is}$, it suffices to show that the convolution of $\mathbf{1}_{I_1 s_j I_1}$ and $\mathbf{1}_{I_1 s_i I_1}$ is $\mathbf{1}_{I_1 s_j s_i I_1}$.
Since $I_1 s_j s_i I_1 \subset I_1 s_j I_1 \cdot I_1 s_i I_1$, we have that $\mathbf{1}_{I_1 s_j s_i I_1} \leq \mathbf{1}_{I_1 s_j I_1} \ast \mathbf{1}_{I_1 s_i I_1}$.
As $\#I_1 s_j s_i I_1/I_1 = \# I_1s_i I_1/I_1 \cdot \# I_1s_jI_1/I_1$, $\mathbf{1}_{I_1 s_j s_i I_1}$ and $\mathbf{1}_{I_1 s_j I_1} \ast \mathbf{1}_{I_1 s_i I_1}$ have the same integrals for any Haar measure.
We conclude that they are equal.
The identity $\iota_{s_i s}^{s_i s_i s}\circ \iota_s^{s_i s}=p$ reduces to a calculation for the $\GL_2$ minor coming from the $i$-th and $i+1$-th rows and columns.
This identity then follows from the computation in the proof of \cite[Lemme 2.2]{breuil}.
\end{proof}
\end{prop}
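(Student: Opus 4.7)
The plan is to prove the formula by induction on $\ell(s'')$, reducing it to the case where $s''$ is a simple reflection. In the base case $s''=e$ the claim is trivial, so assume $\ell(s'')\geq 1$ and write $s''=s_i\cdot \widetilde{s}''$ for a simple reflection $s_i$ with $\ell(\widetilde{s}'')=\ell(s'')-1$. By functoriality, decompose
\[
\iota_{s's}^{s''s's}\circ\iota_{s}^{s's}
= \bigl(\iota_{\widetilde{s}''s's}^{s_i\widetilde{s}''s's}\circ\iota_{s's}^{\widetilde{s}''s's}\bigr)\circ\iota_s^{s's},
\]
apply the inductive hypothesis to the inner composition $\iota_{s's}^{\widetilde{s}''s's}\circ\iota_s^{s's}$, and feed the result into the outer composition. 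A short calculation shows that the powers of $p$ add correctly, using $\ell(s_i\widetilde{s}'')=\ell(\widetilde{s}'')+1$, so the problem is reduced to showing
\[
\iota_{s_i s}^{s_j s_i s}\circ \iota_s^{s_i s} \;=\; p^{\frac{1}{2}(\ell(s_j)+\ell(s_i)-\ell(s_js_i))}\,\iota_s^{s_j s_i s}
\]
for simple reflections $s_i,s_j$. In $S_3$ the exponent is $0$ when $i\neq j$ and $1$ when $i=j$, so there are exactly two subcases.

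The first subcase is $i\neq j$, where one must show the composition equals $\iota_s^{s_js_is}$ on the nose. By Frobenius reciprocity it suffices to track the image of $v^s$, and using (\ref{inter}) this image is encoded by the convolution of the indicator functions $\mathbf{1}_{I_1 s_j I_1}$ and $\mathbf{1}_{I_1 s_i I_1}$. I would establish the identity
\[
\mathbf{1}_{I_1 s_j I_1}\ast \mathbf{1}_{I_1 s_i I_1} \;=\; \mathbf{1}_{I_1 s_j s_i I_1}.
\]
The containment $I_1 s_j s_i I_1\subset (I_1 s_j I_1)\cdot(I_1 s_i I_1)$ is immediate, giving one inequality of nonnegative functions; equality then follows by integrating against Haar measure and invoking the multiplicativity of the double coset cardinalities $\#I_1 s_j s_i I_1/I_1 = \#I_1 s_j I_1/I_1\cdot \#I_1 s_i I_1/I_1$, which holds precisely because $\ell(s_js_i)=\ell(s_j)+\ell(s_i)$.

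The second subcase is $s_j=s_i$, so that $s_j s_i=e$ and the desired identity reads
\[
\iota_{s_i s}^{s_i s_i s}\circ\iota_s^{s_i s} \;=\; p\cdot\iota_s^{s}.
\]
This is the classical rank-one fact: inside $\GL_3$ the intertwiners at play involve only the $\SL_2$-factor associated to the simple root of $s_i$, and the statement reduces to the corresponding computation for principal series of $\GL_2(\FF_p)$, which is exactly Lemme 2.2 of \cite{breuil}. I would simply quote this.

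The main obstacle is not any single one of these steps, all of which are elementary, but rather the bookkeeping in the inductive step: one must check that the exponent $\frac{1}{2}(\ell(s_i)+\ell(\widetilde{s}''s')-\ell(s_i\widetilde{s}''s'))$ coming from the outer composition, added to the exponent $\frac{1}{2}(\ell(\widetilde{s}'')+\ell(s')-\ell(\widetilde{s}''s'))$ from the inductive hypothesis, yields $\frac{1}{2}(\ell(s'')+\ell(s')-\ell(s''s'))$. This is a mechanical length identity in the Coxeter group $S_3$, but it is where some care is needed to make the induction work for arbitrary $s',s''$.
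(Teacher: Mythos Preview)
Your approach is the paper's: reduce by induction to the two simple-reflection subcases, handle $i\neq j$ by the convolution/counting argument on $\mathbf{1}_{I_1 s_j I_1}\ast\mathbf{1}_{I_1 s_i I_1}$, and handle $i=j$ by quoting Lemme~2.2 of \cite{breuil}. Those two computations match the paper exactly.

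There is, however, a gap in your reduction. Your induction is on $\ell(s'')$ alone, and in the inductive step you need the ``outer composition'' $\iota_{\widetilde{s}''s's}^{s_i\widetilde{s}''s's}\circ\iota_s^{\widetilde{s}''s's}$, which is an instance of the proposition with first argument $s_i$ of length $1$ and second argument $\widetilde{s}''s'$ of \emph{arbitrary} length. Your inductive hypothesis only covers this when the current $\ell(s'')\geq 2$; at $\ell(s'')=1$ you have $\widetilde{s}''=e$ and the step is vacuous, so the induction bottoms out at the case $s''=s_j$ simple, $s'$ arbitrary --- not at the case where both are simple reflections. Your two subcases therefore do not cover the base of your induction. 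The fix is standard: either run a nested induction on $\ell(s')$ inside the case $\ell(s'')=1$, or induct on $\ell(s')+\ell(s'')$ from the start. The length bookkeeping you flag in your last paragraph then goes through. The paper glosses over this with ``by induction'', so you are not being held to a higher standard, but as written your reduction does not land where you claim it does.
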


We now use Proposition \ref{submodule1} to complete the classification of maps between lattices with irreducible cosocle.
For $i = 1,2,3$, $s \in S_3$, let $\iota_i^s: \tau^i \rarrow \tau^s$ be fixed inclusions of lattices that are nonzero modulo $p$.
For $i$ and $j$ in the set $\{1,2,3\}$ with $i<j$, let $\iota_i^j: \tau^i \rarrow \tau^j$ be fixed inclusions of lattices that are nonzero modulo $p$.
These inclusions are unique up to unit scalar by \cite[Lemma 4.1.1]{1305.1594}.
Let $\iota_s^i: \tau^s \rarrow \tau^i$ and $\iota_j^i: \tau^j \rarrow \tau^i$ be the unique inclusions of lattices that are nonzero modulo $p$ such that $\iota_s^i \circ \iota_i^s$ and $\iota_j^i \circ \iota_i^j$ are powers of $p$.

\begin{prop} \label{prop:gauge}
The compositions of inclusions of lattices are given as follows:
 \begin{enumerate}
  \item $\iota_j^i \circ \iota_i^j = p^2$;
  \item if $s = s_1,s_2,$ or $w_0$, then $\iota_i^s \circ \iota_s^i = p$; and
  \item if $s = e,r_1,$ or $r_2$, then $\iota_i^s \circ \iota_s^i = p^2$.
 \end{enumerate}
\begin{proof}[Proof of $(2)$]
By relabelling $a,b,$ and $c$, we can assume without loss of generality that $i = 1$.
We use the symbol $\sim$ to denote equality up to a unit.
Let $s'$ be one of $s_1,s_2,$ or $w_0$ with $s'\neq s$.
By Proposition \ref{submodule1'}, the minimal submodule of $\taubar^1$ containing $F(s'(a,b,c))$ as a Jordan--H\"older factor does not contain $F(s(a,b,c))$ as a Jordan--H\"older factor.
Thus, the image of the composition $\iota_{s'}^1 \circ \iota_{s}^{s'}$ is zero modulo $p$, and hence a positive power of $p$ times $\iota_{s}^i$ up to a unit.
Then again using Proposition \ref{submodule}, we have $p^2 = \iota_{s'}^{s} \circ \iota_{s}^{s'} \sim \iota_1^{s} \circ \iota_{s'}^1 \circ \iota_{s}^{s'}$ so that $p^2$ is a positive power of $p$ times $\iota_1^{s}\circ \iota_{s}^1$.
We conclude that $\iota_1^{s}\circ \iota_{s}^1 = p$.
\end{proof}
\end{prop}

To prove (1) and (3), we need the following result.

\begin{prop} \label{submodule1}
For $i = 1,2,$ or $3$, the socle and cosocle filtrations of $\taubar^i$ agree and have three associated graded pieces $($see Proposition \ref{submodule1'}$)$.
Moreover, all nontrivial extensions between pieces that can occur do occur $($see \S \ref{notation}$)$.
\end{prop}
\begin{proof}
We first show the second statement.
Clearly all possible nontrivial extensions between the top two rows occur.
We now show that all possible nontrivial extensions between the bottom two rows occur.
Since the composition $\iota_1^{s_1}\circ \iota_{s_1}^1= p$ by Proposition \ref{prop:gauge}(2), the image of $\iota_1^{s_1}: \taubar^1 \rarrow \taubar^{s_1}$ and the cokernel of $\iota_{s_1}^1:\taubar^{s_1} \rarrow \taubar^1$ have the same Jordan--H\"older factors, namely $F(b-1,c,a-p+2)$, $F(a,c,b-p+1), F(c+p-1,b,a-p+1),$ and $F(c+p-1,a,b)$ by Proposition \ref{submodule}.
Hence all possible nontrivial extensions of $F(b+p-1,a,c)$ occur in $\taubar^1$ since they occur in $\im(\taubar^{s_1} \rarrow \taubar^1)$ by Proposition \ref{submodule}.
All other nontrivial extensions are established analogously.
The first statement follows from Proposition \ref{submodule1'} and the second statement.
\end{proof}

\begin{proof}[Proof of Proposition \ref{prop:gauge}$(1)$ and $(3)$]
We now turn to (1).
We continue to use the symbol $\sim$ to denote equality up to a unit.
By Propositions \ref{submodule} and \ref{submodule1}, we have $\iota_{s_1}^{w_0} \sim  \iota_i^{w_0} \circ \iota_{s_1}^i$, $\iota_{w_0}^{s_1} \sim \iota_j^{s_1} \circ \iota_{w_0}^j$, $\iota_j^i \sim \iota_{s_1}^i \circ \iota_j^{s_1}$, and $\iota_i^j \sim \iota_{w_0}^j \circ \iota_i^{w_0}$.
Hence $\iota_i^{w_0} \circ \iota_j^i \circ \iota_i^j \sim \iota_i^{w_0} \circ \iota_{s_1}^i \circ \iota_j^{s_1} \circ \iota_{w_0}^j \circ \iota_i^{w_0} \sim \iota_{s_1}^{w_0} \circ \iota_{w_0}^{s_1} \circ \iota_i^{w_0} \sim  p^2 \iota_i^{w_0}$ by Proposition \ref{intercomp}, and so $\iota_j^i \circ \iota_i^j = p^2$.

For (3), by relabelling $a,b,$ and $c$, we can assume without loss of generality that $s = e$.
From Propositions \ref{submodule} and \ref{submodule1}, we see that $\iota_e^i\sim\iota_{s_1}^i \circ \iota_e^{s_1}$ and $\iota_i^e \sim \iota_{s_1}^e \circ \iota_i^{s_1}$.
Hence $\iota_i^e \circ \iota_e^i \sim \iota_{s_1}^e \circ \iota_i^{s_1} \circ \iota_{s_1}^i \circ \iota_e^{s_1}=p^2$ by Proposition \ref{intercomp} and (2).
\end{proof}

\subsection{Lattices via Morita theory}\label{sec:morita}

In this subsection, we use Morita theory to explicitly describe the moduli of lattices in a given residually multiplicity
free $K$-representation.
The idea to use Morita theory to describe the moduli of lattices was suggested by David Helm (see \cite[\S 1.6]{1305.1594}).

\subsubsection{The abelian category generated by lattices}
Let $\tau$ be a continuous irreducible (finite dimensional) $K$-representation over $\QQ_p$, which is residually multiplicity free.
Let $\mathcal{L}$ be the category of all finitely generated $\ZZ_p$-modules with a $K$-action which are isomorphic to subquotients of $\ZZ_p$-lattices in $\tau^{\oplus n}$ for some $n\in \NN$.
The irreducible objects of $\mathcal{L}$ are $\sigma$ where $\sigma$ is a Jordan--H\"older factor of $\taubar$.
Let $\tau^\sigma \subset \tau$ be a lattice (unique up to homothety by \cite[Lemma 4.1.1]{1305.1594}) with cosocle $\sigma$.

Let $\Gamma$ be the group $\GL_3(\FF_p)$.
Let $P^\sigma\rarrow \sigma$ be a projective envelope in the category of $\ZZ_p[\Gamma]$-modules.
Then there exists a surjection $\varphi:P^\sigma \surj \tau^\sigma$, which we fix.
The following lemma appears in the proof of \cite[Proposition 4.2.1]{1305.1594}.

\begin{lem} \label{lemma:generator}
The space $\Hom_{\ZZ_p[\Gamma]}(P^\sigma,\tau^\sigma)$ is a free $\ZZ_p$-module generated by $\varphi$.
\begin{proof}
The space is clearly free over $\ZZ_p$, being finitely generated and torsion-free.
By projectivity of $P^\sigma$, we have
\[\Hom_{\ZZ_p[\Gamma]}(P^\sigma,\tau^\sigma)/p\Hom_{\ZZ_p[\Gamma]}(P^\sigma,\tau^\sigma) \cong \Hom_{\FF_p[\Gamma]}(\Pbar^\sigma,\taubar^\sigma) \cong \Hom_{\FF_p[\Gamma]}(\Pbar^\sigma,\taubar^{\mathrm{ss}}),\]
where the last isomorphism follows from projectivity of $\Pbar^\sigma$ as an $\FF_p[\Gamma]$-module.
Since $\Hom_{\FF_p[\Gamma]}(\Pbar^\sigma,\taubar^{\mathrm{ss}})$ is one-dimensional and generated by the image of $\varphi$, the result now follows from Nakayama's lemma.
\end{proof}
\end{lem}

\begin{lem}\label{lemma:quotient}
For any $M \in \mathcal{L}$, any $\ZZ_p[\Gamma]$-homomorphism $P^\sigma \rarrow M$ factors through $\varphi$.
\begin{proof}
Suppose that $M$ is a quotient of a lattice $N \subset \tau^{\oplus n}$.
Then the map $P^\sigma \rarrow M$ lifts to a map $P^\sigma \rarrow N$.
For some embedding $\tau^\sigma\subset \tau$, we have $N \subset (\tau^\sigma)^{\oplus n}$.
So we can assume without loss of generality that $M = (\tau^\sigma)^{\oplus n}$.
Since by Lemma \ref{lemma:generator} $\Hom_{\ZZ_p[\Gamma]}(P,(\tau^\sigma)^{\oplus n})$ is generated by maps which factor through $\varphi$, any such map factors through $\varphi$.
\end{proof}
\end{lem}

\begin{prop} \label{projcover}
The lattices $\tau^\sigma$ are projective objects in $\mathcal{L}$.
\begin{proof}
We show that $\tau^\sigma$ satisfies the lifting property.
Let $M\surj N$ be a surjection in $\mathcal{L}$.
We wish to lift a map $\tau^\sigma \rarrow N$ to a map $\tau^\sigma \rarrow M$.
The composition $P^\sigma \surj \tau^\sigma \rarrow N$ with $\varphi$ lifts to $P^\sigma \rarrow M$ by projectivity of $P^\sigma$, which factors through $\varphi$ by Lemma \ref{lemma:quotient}. 
Since $\varphi$ is surjective, this gives the required lifting.
\end{proof}
\end{prop}

Let $\mathcal{P} = \oplus_\sigma \tau^\sigma$ where $\sigma$ runs over the distinct Jordan--H\"older factors of $\taubar$.
Let $\mathcal{E} = \End_K(\mathcal{P})$.

\begin{prop} \label{prop:morita}
The functor $M \mapsto \Hom_K(\mathcal{P},M)$ gives an equivalence of categories $\mathcal{L} \rarrow \textrm{f.g. } \mathcal{E}^{\textrm{op}}$-modules.
\begin{proof}
By Proposition \ref{projcover}, $\mathcal{P}$ is projective generator of the category $\mathcal{L}$. The equivalence follows from Morita theory with quasi-inverse given by $(\cdot) \otimes_{\mathcal{E}^{\textrm{op}}} \mathcal{P}$.
\end{proof}
\end{prop}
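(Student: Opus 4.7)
The approach is standard Morita theory: I would verify that $\mathcal{P}$ is a finitely generated projective generator of $\mathcal{L}$ and then invoke the classical Morita equivalence. Projectivity of $\mathcal{P}$ is immediate from Proposition \ref{projcover}, since a finite direct sum of projectives is projective, and finite generation over $\ZZ_p[K]$ is clear from the construction of $\mathcal{P}$.

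The key step is to show that $\mathcal{P}$ generates $\mathcal{L}$: every $M\in\mathcal{L}$ is a quotient of $\mathcal{P}^n$ for some $n$. For this, note that any $M\in\mathcal{L}$ is finitely generated over $\ZZ_p$ and so admits a cosocle (maximal semisimple quotient) whose simple constituents are by construction among the Jordan-Holder factors $\sigma$ of $\taubar$, say $\bigoplus_\sigma \sigma^{n_\sigma}$. Projectivity of each $\tau_\sigma$ lifts the canonical surjection $\bigoplus_\sigma \tau_\sigma^{n_\sigma}\surj \bigoplus_\sigma \sigma^{n_\sigma}$ to a map into $M$, and Nakayama's lemma upgrades this to a surjection $\bigoplus_\sigma \tau_\sigma^{n_\sigma}\surj M$, exhibiting $M$ as a quotient of a direct sum of copies of $\mathcal{P}$.

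Once $\mathcal{P}$ is known to be a finitely generated projective generator, the classical Morita theorem yields the equivalence: the adjoint pair $\Hom_K(\mathcal{P},-)$ and $(-)\otimes_{\mathcal{E}^{\textrm{op}}}\mathcal{P}$ are mutually quasi-inverse. The unit and counit are isomorphisms because they reduce tautologically to the identity on the generators $\mathcal{E}^{\textrm{op}}$ and $\mathcal{P}$ via $\End_K(\mathcal{P})=\mathcal{E}$; the general case follows by presenting any object on either side as the cokernel of a map between direct sums of copies of $\mathcal{E}^{\textrm{op}}$ (respectively $\mathcal{P}$), together with right-exactness of $\Hom_K(\mathcal{P},-)$ (by projectivity) and of the tensor product.

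The main (and rather minor) obstacle is to check that the quasi-inverse preserves the finite-generation conditions in play here. This is handled on one side by observing that a cokernel of $\mathcal{P}^m\rarrow \mathcal{P}^n$ is a subquotient of a lattice in $\tau^{\oplus N}$ and so lies in $\mathcal{L}$ by definition; on the other side by noting that $\Hom_K(\mathcal{P},M)$ is finitely generated over $\mathcal{E}^{\textrm{op}}$ because, having presented $M$ as a quotient of some $\mathcal{P}^n$, projectivity of $\mathcal{P}$ makes $\Hom_K(\mathcal{P},-)$ carry such a presentation to a presentation of $\Hom_K(\mathcal{P},M)$ by copies of $\mathcal{E}$.
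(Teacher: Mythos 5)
Your proposal is correct and follows essentially the same route as the paper: establish that $\mathcal{P}$ is a projective generator of $\mathcal{L}$ (via Proposition \ref{projcover}) and then invoke classical Morita theory with quasi-inverse $(\cdot)\otimes_{\mathcal{E}^{\textrm{op}}}\mathcal{P}$. The only difference is that you spell out the generator property (cosocle plus Nakayama) and the finite-generation bookkeeping, which the paper leaves implicit.
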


\subsubsection{Moduli of lattices in algebraic vectors} \label{subsubsec:moduli}
We use the notation $V,W,$ and $W^0$ from \S \ref{algvect}.
Let $\mathcal{E}^\mathrm{alg}:= \End_K(W^0\oplus W)$.
Using Proposition \ref{alglat}, it is easy to calculate that there is an isomorphism
\[ \mathcal{E}^\mathrm{alg} \isom \left( \begin{array}{cc}
\ZZ_p & p\ZZ_p  \\
\ZZ_p & \ZZ_p  \end{array} \right),\]
where $\bigl(\begin{smallmatrix}
1&0\\ 0&0
\end{smallmatrix} \bigr)$ and
$\bigl(\begin{smallmatrix}
0&0\\ 0&1
\end{smallmatrix} \bigr)$
restrict to the identity on $W^0$ and $W$, respectively, and
$\bigl(\begin{smallmatrix}
0&p\\ 0&0
\end{smallmatrix} \bigr)$
and
$\bigl(\begin{smallmatrix}
0&0\\ 1&0
\end{smallmatrix} \bigr)$ correspond to $i^\vee$ and $i$, respectively.

We now use Proposition \ref{prop:morita} to describe the moduli space of lattices in $V$ (with rigidifications). Let $\cM$ be the functor which takes a $\ZZ_p$-algebra $R$ to the set
\begin{align*}
\{(\mathcal{P}_R,\psi_1,\psi_2)|&\mathcal{P}_R \textrm{ is a free } R\textrm{-module with an } \mathcal{E}^\mathrm{alg} \textrm{-action},\\
&\psi_1: R\isom\bigl(\begin{smallmatrix}
1&0\\ 0&0
\end{smallmatrix} \bigr)\mathcal{P}_R,
\psi_2:R \isom \bigl(\begin{smallmatrix}
0&0\\ 0&1
\end{smallmatrix} \bigr)\mathcal{P}_R\}/\cong
\end{align*}
We consider $\cE^\mathrm{alg}$-modules rather than $\cE^{\mathrm{alg},\textrm{op}}$-modules because of the variance of the patching construction in \S \ref{patching}.

\begin{prop} \label{rep}
The functor $\cM$ is represented by the ring $\ZZ_p[x,y]/(xy-p)$ where the map $\ZZ_p[x,y]/(xy-p) \rarrow R$ classifying $(\mathcal{P}_R,\psi_1,\psi_2)$ is given by $x \mapsto \psi_1^{-1} \circ \bigl(\begin{smallmatrix}
0&p\\ 0&0
\end{smallmatrix} \bigr) \circ \psi_2$ and $y \mapsto \psi_2^{-1} \circ \bigl(\begin{smallmatrix}
0&0\\ 1&0
\end{smallmatrix} \bigr) \circ \psi_1$.
\begin{proof}
Let $A = \ZZ_p[x,y]/(xy-p)$. Then we define $\mathcal{P}_A$ to be $A^2$ where $\mathcal{E}^\mathrm{alg} \rarrow M_2(A)$ is given by
\begin{align*}
\bigl(\begin{smallmatrix}
1&0\\ 0&0
\end{smallmatrix} \bigr) &\mapsto \bigl(\begin{smallmatrix}
1&0\\ 0&0
\end{smallmatrix} \bigr) \\
\bigl(\begin{smallmatrix}
0&0\\ 0&1
\end{smallmatrix} \bigr) &\mapsto \bigl(\begin{smallmatrix}
0&0\\ 0&1
\end{smallmatrix} \bigr) \\
\bigl(\begin{smallmatrix}
0&p\\ 0&0
\end{smallmatrix} \bigr) &\mapsto \bigl(\begin{smallmatrix}
0&x\\ 0&0
\end{smallmatrix} \bigr) \\
\bigl(\begin{smallmatrix}
0&0\\ 1&0
\end{smallmatrix} \bigr) &\mapsto \bigl(\begin{smallmatrix}
0&0\\ y&0
\end{smallmatrix} \bigr)
\end{align*}
The triple $(P_A,\id_A,\id_A)$ is universal since $\psi_1 \oplus \psi_2 \colon (R^2,\id_R,\id_R) \rarrow (\mathcal{P}_R,\psi_1,\psi_2)$ is an isomorphism. Here the $\mathcal{E}^\mathrm{alg}$-action of $R^2$ is given by the map $A \rarrow R$ described in the statement of the proposition.
\end{proof}
\end{prop}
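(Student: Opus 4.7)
The plan is to recognize that $\mathcal{E}^{\mathrm{alg}}$ admits a simple presentation by generators and relations, so that the data classified by $\cM(R)$ reduces to a pair of scalars satisfying a single polynomial relation.

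First I would record the algebra structure of $\mathcal{E}^{\mathrm{alg}}$. As a $\ZZ_p$-algebra it is generated by the orthogonal idempotents $e_1 = \bigl(\begin{smallmatrix}1&0\\0&0\end{smallmatrix}\bigr)$, $e_2 = \bigl(\begin{smallmatrix}0&0\\0&1\end{smallmatrix}\bigr)$ summing to $1$, together with the off-diagonal elements $\alpha = \bigl(\begin{smallmatrix}0&p\\0&0\end{smallmatrix}\bigr)$ and $\beta = \bigl(\begin{smallmatrix}0&0\\1&0\end{smallmatrix}\bigr)$, subject to $e_1 \alpha e_2 = \alpha$, $e_2 \beta e_1 = \beta$, $\alpha \beta = p e_1$, and $\beta \alpha = p e_2$. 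These relations are complete, which one checks by comparing free $\ZZ_p$-ranks of each piece $e_i \mathcal{E}^{\mathrm{alg}} e_j$ with those produced by the presentation.

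Next I would unwind the functor $\cM$. Given a triple $(\mathcal{P}_R,\psi_1,\psi_2)$, the idempotent decomposition yields $\mathcal{P}_R = e_1 \mathcal{P}_R \oplus e_2 \mathcal{P}_R \cong R \oplus R$ via $(\psi_1,\psi_2)$. The operators $\alpha, \beta$ therefore become $R$-linear maps $R\to R$, i.e.\ scalars $x = \psi_1^{-1} \circ \alpha \circ \psi_2(1)$ and $y = \psi_2^{-1}\circ \beta \circ \psi_1(1)$, and the relation $\alpha\beta = pe_1$ translates to $xy = p$ (the relation $\beta\alpha = pe_2$ gives the same equation in a commutative ring). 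This defines the candidate map $A = \ZZ_p[x,y]/(xy-p) \to R$. Conversely, given a homomorphism $A \to R$, I construct $\mathcal{P}_R = R^2$ with $e_i$ acting as the standard projectors and $\alpha, \beta$ acting by the matrices $\bigl(\begin{smallmatrix}0&x\\0&0\end{smallmatrix}\bigr)$ and $\bigl(\begin{smallmatrix}0&0\\y&0\end{smallmatrix}\bigr)$; the relation $xy = p$ in $R$ is precisely what is needed so that these matrices satisfy the defining relations of $\mathcal{E}^{\mathrm{alg}}$ inside $M_2(R)$. Taking $\psi_1 = \psi_2 = \id_R$ gives a triple, and the two constructions are manifestly inverse.

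Universality then reduces to observing that any isomorphism of triples must be compatible with the rigidifications, and hence is forced to be the identity on each $e_i\mathcal{P}_R$; thus the classifying ring map is unique. The main thing to be careful about is not to conflate the matrix entry $p$ appearing in $\alpha$ with the coordinate $x$ on the moduli space: the relation $xy=p$ comes from the fact that $\alpha\beta$ equals $p$ times a \emph{generator} of $e_1 \mathcal{E}^{\mathrm{alg}} e_1 \cong \ZZ_p$, so it is only after trivializing $e_1\mathcal{P}_R$ by $\psi_1$ that the composition $\alpha\circ\beta$ records the product $xy$ rather than an unrelated scalar.
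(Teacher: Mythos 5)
Your proof is correct and follows essentially the same route as the paper: both arguments rest on the idempotent decomposition $\mathcal{P}_R = e_1\mathcal{P}_R \oplus e_2\mathcal{P}_R$ trivialized by $\psi_1 \oplus \psi_2$, so that the two off-diagonal generators act by scalars $x,y$ constrained exactly by $xy=p$ via $i\circ i^\vee = p$ (Proposition \ref{alglat}). You merely spell out the presentation of $\mathcal{E}^\mathrm{alg}$ and the mutual inverseness of the two constructions, which the paper leaves implicit in its one-line universality check.
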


\section{Hecke algebras} \label{hecke}

In this section, we describe integral Hecke operators for some locally algebraic types, and relate them to maps between lattices.
We give some general facts about Hecke algebras in \S \ref{heckered} that we specialize to extensions
of weights in \S \ref{heckealg} and to principal series types in \S \ref{heckeps}.

\subsection{Hecke algebras for split reductive groups} \label{heckered}

We first describe integral Hecke algebras for general representations of the $\ZZ_p$-points of split reductive groups.
Let $\GG$ be a split reductive group over $\ZZ_p$, and $\TT \subset \BB$ a maximal torus and a Borel subgroup.
Let $X_*(\TT)$ (resp. $X_*(\TT)_-$) denote the group of cocharacters (resp. antidominant cocharacters) of $\TT$. Let $K = \GG(\ZZ_p)$ be a maximal compact open hyperspecial subgroup of $G = \GG(\QQ_p)$ ~and
\[K_1 = \ker(\GG(\ZZ_p) \rarrow \GG(\FF_p)).\]
For $K$-representations $W_1$, and $W_2$ over $\ZZ_p$, define the Hecke bimodule
\[\mathcal{H}(W_1,W_2):= \Hom_G (\ind_K^G W_1,\ind_K^G W_2),\]
which by Frobenius reciprocity is isomorphic to the space of compactly supported functions $f :G\rarrow \Hom_{\ZZ_p}(W_1,W_2)$ such that $f(k_2gk_1) = k_2 \circ f(g) \circ k_1$ for all $k_1,k_2 \in K$ and $g\in G$.
If $W$ is a $K$-representation, let $\mathcal{H}(W) = \mathcal{H}(W,W)$ be the Hecke algebra of $W$.
If $\pi$ is a $G$-representation over $\ZZ_p$, then $\Hom_K(W,\pi)$ is a right $\mathcal{H}_G(W)$-module.

Recall the Cartan decomposition
\[G = \sqcup_{\mu \in X_*(\TT)_-} K t_\mu K \]
where $t_\mu = \mu(p)$. This gives the decomposition
\begin{equation} \label{cartan}
\ind_{K}^G W = \bigoplus_{\mu \in X_*(\TT)_-} \Ind_K^{Kt_\mu K} W
\end{equation}
as $K$-representations
where
\[\Ind_K^{Kt K} W = \{f:KtK \rarrow W\big{|} f(kx) = kf(x) \hspace{2mm} \forall k\in K,x\in Kt K \} \]
and the $K$-action is the right regular action.
We can simplify this as follows:
\begin{equation} \label{herzighecke}
\Ind_K^{Kt K} W \cong \Ind_{K \cap tKt^{-1}}^{tK} W \cong \Ind_{t^{-1}Kt \cap K}^K W^{(t)}
\end{equation}
where $W^{(t)}$ is identified with $W$ as vector spaces, but the superscript denotes the twisted action on $W^{(t)}$ defined by $k w^{(t)} = (tkt^{-1} w)^{(t)}$ for $w^{(t)} \in W^{(t)}$ and $k \in t^{-1} K t$.


Given a coweight $\mu \in X_*(\TT)_-$, let $\mathcal{H}(W_1,W_2)_\mu = \Hom_K(W_1, \Ind_K^{Kt_\mu K} W_2) \subset \mathcal{H}(W_1,W_2)$ denote the subspace of elements supported on the double coset $Kt_\mu K$.
Let $\widetilde{N}_\mu$ and $\widetilde{P}_\mu$ be $t_\mu^{-1} K_1 t_\mu \cap K$ and $t_\mu^{-1} K t_\mu \cap K$, respectively.
The following proposition is contained in the proof of \cite[Theorem 1.2]{MR2771132}.

\begin{prop} \label{satake1}
Suppose that $K_1$ acts trivially on $W_1$ and $W_2$.
We have a natural isomorphism 
$\mathcal{H}(W_1,W_2)_\mu \cong \Hom_{\widetilde{P}_\mu} ((W_1)_{\widetilde{N}_\mu}, (W_2^{\widetilde{N}_{-\mu}})^{(t_\mu)})$.
\begin{proof}
We have a natural injection 
\begin{align*}
\Hom_{\widetilde{P}_\mu}((W_1)_{\widetilde{N}_\mu},(W_2^{\widetilde{N}_{-\mu}})^{(t_\mu)}) \inj \Hom_{\widetilde{P}_\mu}(W_1,W_2^{(t_\mu)}) &\cong \Hom_K(W_1,\Ind_{\widetilde{P}_\mu}^K W_2^{(t_\mu)}) \\
&\cong \mathcal{H}(W_1,W_2)_\mu
\end{align*}
where the first isomorphism follows from Frobenius reciprocity and the second isomorphism follows from (\ref{herzighecke}).
It suffices to show surjectivity of the first map. Let $f \in \Hom_{\widetilde{P}_\mu}(W_1,W_2^{(t_\mu)})$.
Since $W_1$ is $K_1$-invariant, the image of $f$ is contained in \[\(W_2^{(t_\mu)}\)^{K_1 \cap t_\mu^{-1} K t_\mu} = \(W_2^{t_\mu K_1 t_\mu^{-1}\cap K}\)^{(t_\mu)} = \(W_2^{\widetilde{N}_{-\mu}}\)^{(t_\mu)}.\]
Similarly, since $W_2$ is $K_1$-invariant, $W_2^{(t_\mu)}$ is $\widetilde{N}_\mu$-invariant.
Therefore the map $f$ factors through $(W_1)_{\widetilde{N}_\mu}$.
\end{proof}
\end{prop}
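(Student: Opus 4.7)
The plan is to reduce, via the isomorphism (\ref{herzighecke}) and Frobenius reciprocity, to identifying $\Hom_{\widetilde{P}_\mu}(W_1,W_2^{(t_\mu)})$ with $\Hom_{\widetilde{P}_\mu}((W_1)_{\widetilde{N}_\mu},(W_2^{\widetilde{N}_{-\mu}})^{(t_\mu)})$, and then to extract this identification from the hypothesis that $K_1$ acts trivially on both $W_1$ and $W_2$. Concretely, (\ref{herzighecke}) rewrites the definition as $\mathcal{H}(W_1,W_2)_\mu \cong \Hom_K(W_1,\Ind_{\widetilde{P}_\mu}^K W_2^{(t_\mu)})$, and Frobenius reciprocity for the induction $\Ind_{\widetilde{P}_\mu}^K$ collapses this to $\Hom_{\widetilde{P}_\mu}(W_1,W_2^{(t_\mu)})$.

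The natural map from the coinvariant/invariant hom space into $\Hom_{\widetilde{P}_\mu}(W_1,W_2^{(t_\mu)})$, given by precomposition with $W_1 \surj (W_1)_{\widetilde{N}_\mu}$ and postcomposition with $(W_2^{\widetilde{N}_{-\mu}})^{(t_\mu)} \inj W_2^{(t_\mu)}$, is clearly injective. For surjectivity I would take an arbitrary $\widetilde{P}_\mu$-equivariant $f \colon W_1 \to W_2^{(t_\mu)}$ and verify two claims. First, $f$ factors through $(W_1)_{\widetilde{N}_\mu}$: for $n \in \widetilde{N}_\mu$ one has $t_\mu n t_\mu^{-1} \in K_1$ by definition, so $n$ acts trivially on $W_2^{(t_\mu)}$ under the twisted action, and equivariance forces $f(nw) = n \cdot f(w) = f(w)$, so $f$ kills the relations defining the coinvariants. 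Second, the image of $f$ lies in $(W_2^{\widetilde{N}_{-\mu}})^{(t_\mu)}$: the subgroup $K_1 \cap t_\mu^{-1} K t_\mu$ lies in both $K_1$ and $\widetilde{P}_\mu$, hence acts trivially on $W_1$, so by equivariance acts trivially on the image of $f$ in $W_2^{(t_\mu)}$; the $t_\mu$-conjugate of this subgroup is exactly $\widetilde{N}_{-\mu} = t_\mu K_1 t_\mu^{-1} \cap K$, whose $W_2$-fixed vectors give $(W_2^{\widetilde{N}_{-\mu}})^{(t_\mu)}$ after retwisting.

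The main subtlety, more bookkeeping than difficulty, is tracking the twisted $(t_\mu)$-action and using the correct conjugate subgroup in each step: for the coinvariant factoring one uses $\widetilde{N}_\mu$ directly (which acts trivially on $W_2^{(t_\mu)}$ via the twist), while for the invariant image one applies $K_1 \cap t_\mu^{-1} K t_\mu$ (which acts trivially on $W_1$ directly) and then conjugates by $t_\mu$ to land in $\widetilde{N}_{-\mu}$. Normality of $K_1$ in $K$ yields $\widetilde{N}_\mu \triangleleft \widetilde{P}_\mu$, so $(W_1)_{\widetilde{N}_\mu}$ inherits a $\widetilde{P}_\mu$-action, and analogously $(W_2^{\widetilde{N}_{-\mu}})^{(t_\mu)}$ is a $\widetilde{P}_\mu$-subrepresentation of $W_2^{(t_\mu)}$, which makes the resulting bijection $\widetilde{P}_\mu$-equivariant. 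Once these matchings are in hand the proof is formal.
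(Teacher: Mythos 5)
Your proposal is correct and follows essentially the same route as the paper: the same reduction via (\ref{herzighecke}) and Frobenius reciprocity to $\Hom_{\widetilde{P}_\mu}(W_1,W_2^{(t_\mu)})$, and the same surjectivity argument using triviality of the $K_1$-action, with $\widetilde{N}_\mu$ acting trivially on $W_2^{(t_\mu)}$ through the twist (giving the factoring through coinvariants) and $K_1 \cap t_\mu^{-1} K t_\mu$ acting trivially on $W_1$ (forcing the image into $(W_2^{\widetilde{N}_{-\mu}})^{(t_\mu)}$). The extra bookkeeping you spell out about which conjugate subgroup is used at each step is exactly what the paper's proof does implicitly.
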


We can rephrase Proposition \ref{satake1} as follows.
Given a $\widetilde{P}_\mu$-morphism $(W_1)_{\widetilde{N}_\mu} \rarrow (W_2^{\widetilde{N}_{-\mu}})^{(t_\mu)}$, we obtain a $K$-morphism $\Ind_{\widetilde{P}_\mu}^K (W_1)_{\widetilde{N}_\mu} \rarrow \Ind_{\widetilde{P}_\mu}^K (W_2^{\widetilde{N}_{-\mu}})^{(t_\mu)}$.
By Frobenius reciprocity, we have a natural map $W_1 \rarrow \Ind_{\widetilde{P}_\mu}^K (W_1)_{\widetilde{N}_\mu}$. 
The composition
\[ W_1 \rarrow \Ind_{\widetilde{P}_\mu}^K (W_1)_{\widetilde{N}_\mu} \rarrow \Ind_{\widetilde{P}_\mu}^K (W_2^{\widetilde{N}_{-\mu}})^{(t_\mu)}
\subset \Ind_{\widetilde{P}_\mu}^K W_2^{(t_\mu)} \cong \Ind_K^{Kt_\mu K} W_2\]
is the corresponding element of $\cH(W_1, W_2)_\mu$.

We can further simplify this when $\mu$ is minuscule, which we now suppose.
See \S 3 and particularly \cite[Proposition 3.8]{MR2771132} for a more general context.
Let $N_{\mu}$ and $P_{\mu}$ be the images of $\widetilde{N}_{\mu}$ and $\widetilde{P}_{\mu}$ in $\GG(\FF_p)$, respectively.
Then $N_{\mu}$ and $P_{\mu}$ are the usual unipotent and parabolic subgroups in $\GG(\FF_p)$, respectively, corresponding to $\mu$.
Let $M_\mu = P_\mu/N_\mu$.
Note that $t_\mu \widetilde{P}_\mu t_\mu^{-1} = \widetilde{P}_{-\mu}$ and $t_\mu \widetilde{N}_\mu K_1 t_\mu^{-1} = \widetilde{N}_{-\mu} K_1$, and so conjugation by $t_\mu$ gives an isomorphism $M_\mu \cong M_{-\mu}$.

\begin{prop} \label{satake}
Suppose that $\mu$ is minuscule and that $K_1$ acts trivially on $W_1$ and $W_2$.
We have a natural isomorphism 
$\mathcal{H}(W_1,W_2)_\mu \cong \Hom_{M_\mu} ((W_1)_{N_\mu}, W_2^{N_{-\mu}})$
where $M_\mu$ acts on $W_2^{N_{-\mu}}$ through the isomorphism $M_\mu \isom M_{-\mu}$ given by conjugation by $t_\mu$.
\begin{proof}
Since $\mu$ is minuscule, $K_1 \subset \widetilde{P}_\mu$. Using Proposition \ref{satake1} we have
\begin{align*}
\mathcal{H}(W_1,W_2)_\mu &\cong
\Hom_{\widetilde{P}_\mu} ((W_1)_{\widetilde{N}_\mu}, (W_2^{\widetilde{N}_{-\mu}})^{(t_\mu)})\\
&\cong \Hom_{\widetilde{P}_\mu} ((W_1)_{\widetilde{N}_\mu K_1}, (W_2^{\widetilde{N}_{-\mu}K_1})^{(t_\mu)}) \\
&\cong \Hom_{M_\mu} ((W_1)_{N_\mu}, W_2^{N_{-\mu}}).
\end{align*}
\end{proof}
\end{prop}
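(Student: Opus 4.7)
The plan is to bootstrap from Proposition \ref{satake1}, which already gives the identification
\[\mathcal{H}(W_1,W_2)_\mu \cong \Hom_{\widetilde{P}_\mu}\bigl((W_1)_{\widetilde{N}_\mu},(W_2^{\widetilde{N}_{-\mu}})^{(t_\mu)}\bigr),\]
and then reduce this to the desired description by exploiting the two hypotheses: (i) $\mu$ is minuscule and (ii) $K_1$ acts trivially on $W_1$ and $W_2$. The essential point is that the first hypothesis lets us absorb $K_1$ into $\widetilde{P}_\mu$, and the second hypothesis then lets us replace $\widetilde{N}_\mu$ and $\widetilde{N}_{-\mu}$ by $\widetilde{N}_\mu K_1$ and $\widetilde{N}_{-\mu} K_1$ without changing the Hom space. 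After quotienting by $K_1$, the groups $\widetilde{P}_\mu/(\widetilde{N}_\mu K_1)$ and $\widetilde{P}_{-\mu}/(\widetilde{N}_{-\mu} K_1)$ become the finite Levi $M_\mu \cong M_{-\mu}$, and the twisted action via $t_\mu$ becomes the natural $M_\mu$-action on $W_2^{N_{-\mu}}$.

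The first step is to verify that $K_1 \subset \widetilde{P}_\mu = t_\mu^{-1} K t_\mu \cap K$. For any root $\alpha$, conjugation by $t_\mu^{-1}$ sends the root subgroup $U_\alpha(p\ZZ_p)$ to $U_\alpha(p^{1-\langle\alpha,\mu\rangle}\ZZ_p)$. Since $\mu$ is minuscule, $\langle\alpha,\mu\rangle \in \{-1,0,1\}$, so the exponent is nonnegative and the image lands in $U_\alpha(\ZZ_p) \subset K$. Combined with the analogous (and trivial) statement for the torus part, this gives $t_\mu^{-1} K_1 t_\mu \subset K$, i.e.\ $K_1 \subset \widetilde{P}_\mu$. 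With this containment in hand, the second step is essentially formal: because $K_1$ acts trivially on $W_1$ and $W_2$, the coinvariants $(W_1)_{\widetilde{N}_\mu}$ coincide with $(W_1)_{\widetilde{N}_\mu K_1}$, and the invariants $W_2^{\widetilde{N}_{-\mu}}$ coincide with $W_2^{\widetilde{N}_{-\mu}K_1}$.

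For the third step, I identify the quotient groups. Under reduction mod $p$, the image of $\widetilde{P}_\mu$ in $\GG(\FF_p)$ is the parabolic $P_\mu$ (this is exactly the minuscule statement that $\widetilde{P}_\mu$ is the preimage of $P_\mu$ in $K$), and similarly the image of $\widetilde{N}_\mu$ is $N_\mu$, so $\widetilde{P}_\mu/(\widetilde{N}_\mu K_1) \cong M_\mu$. The corresponding statement for $-\mu$ gives $\widetilde{P}_{-\mu}/(\widetilde{N}_{-\mu} K_1) \cong M_{-\mu}$, and conjugation by $t_\mu$ furnishes the isomorphism $M_\mu \cong M_{-\mu}$ already noted before the statement. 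Under these identifications, the twisted $\widetilde{P}_\mu$-action on $(W_2^{\widetilde{N}_{-\mu}})^{(t_\mu)}$ descends to the natural $M_\mu$-action on $W_2^{N_{-\mu}}$ transported from the $M_{-\mu}$-action via the $t_\mu$-conjugation isomorphism, yielding the desired $\Hom_{M_\mu}((W_1)_{N_\mu}, W_2^{N_{-\mu}})$.

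The main potential obstacle is the inclusion $K_1 \subset \widetilde{P}_\mu$, which is really the only place the minuscule hypothesis enters in an essential way; everything else is bookkeeping around Frobenius reciprocity and passage to $K_1$-coinvariants. Once that inclusion is established, the isomorphism is forced, and the compatibility of the $M_\mu$-action is automatic from how Proposition \ref{satake1} was set up.
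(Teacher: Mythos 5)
Your proposal is correct and follows essentially the same route as the paper: invoke Proposition \ref{satake1}, use the minuscule hypothesis to get $K_1 \subset \widetilde{P}_\mu$, use triviality of the $K_1$-action to pass to $\widetilde{N}_\mu K_1$-coinvariants and $\widetilde{N}_{-\mu}K_1$-invariants, and descend to $M_\mu$ via the $t_\mu$-conjugation isomorphism $M_\mu \cong M_{-\mu}$. Your explicit root-subgroup verification of $K_1 \subset \widetilde{P}_\mu$ is a detail the paper leaves implicit, but the argument is the same.
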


We specialize the above discussion to the case when $W_1 = W_2 = \sigma$ is an irreducible $\GG(\FF_p)$-representation over $\FF_p$ (and hence a $K$-representation by inflation) and $\mu$ is a minuscule coweight.
By \cite[Lemma 2.5]{MR2845621}, $\sigma_{N_\mu}$ is irreducible, and the composition of canonical maps $\sigma^{N_{-\mu}} \subset \sigma \surj \sigma_{N_\mu}$ is an isomorphism of $M_\mu\cong M_{-\mu}$-representations.
Recalling that the isomorphism $M_\mu\cong M_{-\mu}$ is given by $t_\mu$-conjugation, we get a $\widetilde{P}_\mu$-isomorphism $\sigma_{N_\mu} \isom t_{-\mu} \cdot \sigma^{N_{-\mu}} \subset \ind_{\widetilde{P}_{-\mu}}^G \sigma^{N_{-\mu}}$.
This composite induces a nonzero map 
\begin{equation}\label{def:T}
T'_{\sigma,\mu}: \ind_{\widetilde{P}_\mu}^G \sigma_{N_\mu} \rarrow \ind_{\widetilde{P}_{-\mu}}^G \sigma^{N_{-\mu}}.
\end{equation}
As $\cH(\sigma)_\mu$ is one-dimensional by {\it loc. cit.}~and Proposition \ref{satake}, the composition of $T'_{\sigma,\mu}$ with the natural inclusion $\ind_{\widetilde{P}_{-\mu}}^G \sigma^{N_{-\mu}} \cong \ind_{\widetilde{P}_{\mu}}^G (\sigma^{N_{-\mu}})^{(t_\mu)} \subset \ind_K^{Kt_\mu K} \sigma$ is a generator of $\cH(\sigma)_\mu$.
The following is a rephrasing of weight cycling as in \cite[Proposition 2.3.1]{MR3079258}.

\begin{prop} \label{weightcycling}
The map $($\ref{def:T}$)$ is an isomorphism.
\begin{proof}
Symmetrically to the definition of $T'_{\sigma,\mu}$, one can define $\widetilde{P}_{-\mu}$-isomorphism $\sigma^{N_{-\mu}} \isom t_{\mu} \cdot \sigma_{N_{\mu}} \subset \ind_{\widetilde{P}_{\mu}}^G \sigma_{N_{\mu}}$.
The composite induces a map $\ind_{\widetilde{P}_{-\mu}}^G \sigma^{N_{-\mu}} \rarrow \ind_{\widetilde{P}_\mu}^G \sigma_{N_\mu}$ which is the inverse of $T'_{\sigma,\mu}$.
\end{proof}
\end{prop}

We omit $\mu$ from the notation if it is clear from context.
The following proposition follows from the discussion after Proposition \ref{satake1}.

\begin{prop} \label{extensioncycling}
If $T' \in \cH(W_1,W_2)_\mu$ corresponds via Proposition \ref{satake} to a map that factors through $\sigma_{N_\mu} \cong \sigma^{N_{-\mu}}$, then $T'$ factors through $T'_{\sigma,\mu}$ as $\ind_K^G W_1 \rarrow \ind_K^G \Ind_{\widetilde{P}_\mu}^K \sigma_{N_\mu} \rarrow \ind_K^G \Ind_{\widetilde{P}_{-\mu}}^K \sigma^{N_{-\mu}} \rarrow \ind_K^G W_2$.
\end{prop}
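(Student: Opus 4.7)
The plan is to trace through the explicit formula for $T' \in \cH(W_1, W_2)_\mu$ in terms of the $M_\mu$-equivariant map $f \colon (W_1)_{N_\mu} \to W_2^{N_{-\mu}}$ given in the paragraph following Proposition \ref{satake1}, and to observe that the construction is natural in $f$. By hypothesis, $f$ admits a factorization
\[(W_1)_{N_\mu} \xrightarrow{f_1} \sigma_{N_\mu} \xrightarrow{\sim} \sigma^{N_{-\mu}} \xrightarrow{f_2} W_2^{N_{-\mu}},\]
where the middle arrow is precisely the $M_\mu$-isomorphism used to define $T'_{\sigma}$ in the paragraph preceding Proposition \ref{weightcycling}.

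That recipe expresses $T'$ as $\ind_K^G$ applied to the $K$-equivariant composition
\[W_1 \rarrow \Ind_{\widetilde{P}_\mu}^K (W_1)_{N_\mu} \rarrow \Ind_{\widetilde{P}_\mu}^K (W_2^{N_{-\mu}})^{(t_\mu)} \hookrightarrow \Ind_{\widetilde{P}_\mu}^K W_2^{(t_\mu)} \cong \Ind_K^{Kt_\mu K} W_2.\]
Functoriality of $\Ind_{\widetilde{P}_\mu}^K$ refines the middle arrow according to the factorization of $f$ (which, as in the proof of Proposition \ref{satake}, corresponds to a $\widetilde{P}_\mu$-equivariant factorization through $\sigma_{N_\mu}$ and $(\sigma^{N_{-\mu}})^{(t_\mu)}$). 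After applying $\ind_K^G$ and regrouping, the first two stages assemble into the map $\ind_K^G W_1 \to \ind_K^G \Ind_{\widetilde{P}_\mu}^K \sigma_{N_\mu}$ coming from $f_1$; the final two stages assemble, using the identification $\ind_K^G \Ind_{\widetilde{P}_\mu}^K (\sigma^{N_{-\mu}})^{(t_\mu)} \cong \ind_K^G \Ind_{\widetilde{P}_{-\mu}}^K \sigma^{N_{-\mu}}$ from the definition of $T'_{\sigma}$, into a map $\ind_K^G \Ind_{\widetilde{P}_{-\mu}}^K \sigma^{N_{-\mu}} \to \ind_K^G W_2$ built from $f_2$ followed by the inclusion $W_2^{N_{-\mu}} \hookrightarrow W_2$.

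The only remaining point is that the middle stage $\ind_K^G \Ind_{\widetilde{P}_\mu}^K \sigma_{N_\mu} \to \ind_K^G \Ind_{\widetilde{P}_{-\mu}}^K \sigma^{N_{-\mu}}$ coincides with $T'_{\sigma}$. This is immediate from the definitions: both maps arise by applying $\ind_K^G \Ind_{\widetilde{P}_\mu}^K$ to the $\widetilde{P}_\mu$-equivariant map $\sigma_{N_\mu} \to (\sigma^{N_{-\mu}})^{(t_\mu)}$ supplied by the fixed $M_\mu$-isomorphism $\sigma_{N_\mu} \cong \sigma^{N_{-\mu}}$. Consequently there is no substantive obstacle; the proposition reduces to bookkeeping about naturality of the construction following Proposition \ref{satake1}.
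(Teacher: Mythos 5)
Your argument is correct and is essentially the paper's own: the paper simply asserts that the proposition follows from the discussion after Proposition \ref{satake1}, and your write-up just makes explicit the naturality of that construction in the $M_\mu$-equivariant map, identifying the middle stage with $T'_{\sigma}$ via the twist/translation identification $\Ind_{\widetilde{P}_\mu}^K (\sigma^{N_{-\mu}})^{(t_\mu)} \cong \Ind_{\widetilde{P}_{-\mu}}^K \sigma^{N_{-\mu}}$ exactly as in the definition preceding Proposition \ref{weightcycling}. No discrepancy to report.
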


\subsection{Hecke algebras of extensions for $\GL_3$} \label{heckealg}

We specialize \S \ref{heckered} to the case of $\GG = \GL_3$ and certain extensions of weights, extending some of the results in \cite{MR2845621}.
Let $\mu$ be an antidominant minuscule coweight.
If $\mu$ is $(0,0,1)$ (resp. $(0,1,1)$), then $N_\mu = \overline{N}_1$ (resp. $\overline{N}_2$) in the notation of \S \ref{subsubsec:mod p}.
Let $a,b,$ and $c$ be integers such that $a>b>c$ and $a-c<p-1$.
Let $\sigma'$ be the upper alcove Serre weight $F(c+p-1,b,a-p+1)$ so that $\sigma'_{N_\mu} \cong F(b+p-1,a)\otimes F(c)$ or $F(a)\otimes F(c,b-p+1)$.
Let $\sigma$ be $F(a-1,b,c+1)$.
Recall that we defined $\overline{W}$ and $\overline{W}^0$ in \S \ref{algvect}.
Then $\overline{W}$ is a nonsplit extension of $\sigma$ by $\sigma'$ and $\overline{W}^0$ is a nonsplit extension of $\sigma'$ by $\sigma$.
Note that there is an injection $\overline{W}\inj\Ind_{\widetilde{P}_\mu}^K \sigma'_{N_\mu} = \Ind_{P_{\mu}}^{\GL_3(\FF_p)} \sigma'_{N_\mu}$.

\begin{prop} \label{modphecke}
We have $\overline{W}_{N_\mu} \cong \sigma_{N_\mu} \oplus \sigma'_{N_\mu}, \overline{W}^{N_{-\mu}} \cong (\sigma')^{N_{-\mu}}, \overline{W}^0_{N_\mu} \cong \sigma'_{N_\mu}$, and $(\overline{W}^0)^{N_{-\mu}} \cong \sigma^{N_{-\mu}} \oplus (\sigma')^{N_{-\mu}}$.
The spaces $\mathcal{H}(\overline{W})_\mu$ and $\mathcal{H}(\overline{W}^0)_\mu$ are both one-dimensional over $\FF_p$.
\begin{proof}
We have exact sequences
\begin{align*}
0 \rarrow (\sigma')^{N_{-\mu}} \rarrow &\overline{W}^{N_{-\mu}} \rarrow \sigma^{N_{-\mu}} \\
0 \rarrow \sigma^{N_{-\mu}} \rarrow &(\overline{W}^0)^{N_{-\mu}} \rarrow (\sigma')^{N_{-\mu}} \\
\sigma'_{N_\mu} \rarrow &\overline{W}_{N_\mu} \rarrow \sigma_{N_\mu} \rarrow 0 \\
\sigma_{N_\mu} \rarrow &\overline{W}^0_{N_\mu} \rarrow \sigma'_{N_\mu} \rarrow 0.
\end{align*}
By \cite[Lemma 2.5]{MR2845621} and the fact that $c \not\equiv a-1 \pmod{p-1}$, the irreducible $M_\mu$-representations $(\sigma')^{N_{-\mu}} \cong \sigma'_{N_\mu}$ and $\sigma^{N_{-\mu}} \cong \sigma_{N_\mu}$ have different actions of the $\GL_1$ factor of $M_\mu$.
Thus, if the above exact sequences extend to short exact sequences, they must split.
It suffices then to determine whether these exact sequences extend to short exact sequences.

Since $\sigma'$ is not a Jordan--H\"older factor of $\Ind_{P_{-\mu}}^{\GL_3(\FF_p)} \sigma^{N_{-\mu}}$ (resp. $\Ind_{{P}_{\mu}}^{\GL_3(\FF_p)} \sigma_{N_{\mu}}$) as can be checked casewise using \cite[Lemma 6.1.1]{MR3079258}, any map $\Ind_{P_{-\mu}}^{\GL_3(\FF_p)} \sigma^{N_{-\mu}} \rarrow \overline{W}$ (resp. $\overline{W}^0 \rarrow \Ind_{P_\mu}^{\GL_3(\FF_p)} \sigma_{N_\mu}$) must be zero.
By Frobenius reciprocity, we conclude that $\overline{W}^{N_{-\mu}} \cong (\sigma')^{N_{-\mu}}$ and $\overline{W}^0_{N_\mu} \cong \sigma'_{N_\mu}$.

On the other hand, we have an inclusion $\overline{W}\subset \Ind_{P_{\mu}}^{\GL_3(\FF_p)} \sigma'_{N_{\mu}}$.
By Frobenius reciprocity, we conclude that $\overline{W}_{N_\mu} \cong \sigma_{N_\mu} \oplus \sigma'_{N_\mu}$.
We claim that there exists a surjection $\Ind_{P_{-\mu}}^{\GL_3(\FF_p)} (\sigma')^{N_{-\mu}} \surj \overline{W}^0$, as can be checked from Proposition \ref{prop:parind}.
By Frobenius reciprocity, we conclude that $(\overline{W}^0)^{N_{-\mu}} \cong \sigma^{N_{-\mu}} \oplus (\sigma')^{N_{-\mu}}$.
The second part now follows from Proposition \ref{satake}.
\end{proof}
\end{prop}

\begin{remark}
An analogous result (with some hypotheses on $a$,$b$, and $c$) could be proven for $\sigma$ one of the weights $F(c+p-2,a,b+1)$ and $F(b-1,c,a-p+2)$ and $\overline{W}$ replaced by the extension of $\sigma$ by $\sigma'$ and $\overline{W}^0$ similarly replaced, but we will not use this in what follows.
\end{remark}



\begin{prop} \label{rank}
The standard $G$-action on $V$ gives an isomorphism $\ind_K^G V \cong V\otimes \ind_K^G \mathbf{1}$ which induces a canonical isomorphism $\mathcal{H}(V)_\mu \cong \mathcal{H}(\mathbf{1})_\mu$.
For an arbitrary $\mu \in X_*(\TT)_-$, $\cH(W, W_0)_\mu$, $\mathcal{H}(W^0)_\mu$, and $\mathcal{H}(W)_\mu$ are canonically lattices in the one dimensional space $\mathcal{H}(V)_\mu \cong \mathcal{H}(\mathbf{1})_\mu$ via $i: W_0\inj W \subset V$.
Moreover, the double coset operator $p^{\langle \mu,w_0\nu\rangle} Kt_\mu K$ lies in all these lattices.
\begin{proof}
The isomorphism $\mathcal{H}(V)_\mu \cong \mathcal{H}(\mathbf{1})_\mu$ comes from the fact that $V$ is irreducible.
The space $\mathcal{H}(\mathbf{1})_\mu$ is one-dimensional over $\QQ_p$ by the Satake isomorphism.
Since $t_\mu$ acts on the $\lambda$-weight space of $V^*$ by $p^{\langle \mu,\lambda\rangle}$ and $-w_0\nu$ is the highest weight of $V^*$, $p^{\langle \mu,w_0\nu\rangle} Kt_\mu K$ stabilizes 
\[\mathcal{H}(W) \cong \Hom_K(W,\ind_K^G W) \cong (W^d \otimes \ind_K^G W)^K \subset V^* \otimes \ind_K^G V\]
and thus is the image of an element in $\mathcal{H}(W)_\mu$.
(Recall that $W^d$ is the Schikhof or $\ZZ_p$-dual of $W$.)
The proof for the other lattices are the same.
\end{proof}
\end{prop}

The reduction $i: \overline{W}^0 \rarrow \overline{W}$ factors through a map $\sigma' \inj \overline{W}$ which we fix.
Let $T' \in \cH({W},{W}^0)_\mu$ be the element which maps to the double coset operator $p^{\langle \mu,w_0\nu\rangle} Kt_\mu K$ in Proposition \ref{rank}.

\begin{prop} \label{iso}
The inclusion $i$ induces natural maps $\cH(W, W^0)_\mu \rarrow \cH(W^0)_\mu$ and $\cH(W, W^0)_\mu \rarrow \cH(W)_\mu$, which are isomorphisms between rank one $\ZZ_p$-modules.
Furthermore, the map $\cH(\overline{W}, \overline{W}^0)_\mu \rarrow \cH(\sigma')_\mu$, induced by the fixed maps $\sigma' \inj \overline{W}$ and $i$, is an isomorphism.
Finally, $T'$ generates $\cH({W},{W}^0)_\mu$ over $\ZZ_p$.
\begin{proof}
As in the proof of Proposition \ref{rank}, $p^{\langle \mu,w_0\nu\rangle} t_\mu$ stabilizes on $W^d$.
Mod $p$, it acts by zero on all weight spaces except for the $-w_0\nu$-weight space.
Let $h$ be the natural isomorphism from $\mathcal{H}(\sigma')$ to the space of compactly supported functions $f: G \rarrow \End_K(\sigma')$ satisfying $f(k_1gk_2) = k_1 \circ f(g) \circ k_2$.
If $T$ is the image of $T'$ in $\mathcal{H}(W)$, then the reduction of $T$ mod $p$ stabilizes $\ind_K^G \sigma'$ and $h(T)(p)$ is the projection to the $w_0\nu$-weight space.
In particular, the map $\cH(\overline{W}, \overline{W}^0)_\mu \rarrow \cH(\sigma')_\mu$ is nonzero and thus an isomorphism since the domain and codomain are both one-dimensional by Proposition \ref{modphecke}.
Furthermore, the maps $\cH(\overline{W}, \overline{W}^0)_\mu \rarrow \cH(\overline{W})_\mu$ and $\cH(\overline{W}, \overline{W}^0)_\mu \rarrow \cH(\overline{W}^0)_\mu$ induced by $i$ are isomorphisms.
By Nakayama's lemma, the maps $\cH({W}, {W}^0)_\mu \rarrow \cH({W})_\mu$ and $\cH({W}, {W}^0)_\mu \rarrow \cH({W}^0)_\mu$ induced by $i$ are isomorphisms and $T'$ generates $\cH({W},{W}^0)_\mu$ over $\ZZ_p$.
\end{proof}
\end{prop}

By abuse of notation, let $T$ denote the generator which is the image of $T'$ in $\cH(W)_\mu,\cH(\overline{W})_\mu, \cH(W^0)_\mu, \cH(\overline{W}^0)_\mu,$ and $\cH(\sigma')_\mu$ via the maps in Proposition \ref{iso}.
We record this in the following proposition.

\begin{prop} \label{factor}
The morphism $T \in \mathcal{H}(W^0)_\mu$ factors as $\ind_K^G W^0 \overset{i}{\rarrow} \ind_K^G W \overset{T'}{\rarrow} \ind_K^G W^0$.
\end{prop}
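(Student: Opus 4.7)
The plan is to unpack the definition of $T \in \cH(W^0)_\mu$ given just before the proposition. Recall that Proposition \ref{iso} produces a natural map $\cH(W,W^0)_\mu \rarrow \cH(W^0)_\mu$, induced by $i : W^0 \rarrow W$, and that $T$ is by definition the image of the fixed generator $T' \in \cH(W, W^0)_\mu$ under this map.

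The map $\cH(W,W^0)_\mu \rarrow \cH(W^0)_\mu$ is precomposition with $\ind_K^G i$: concretely, sending $\varphi \in \Hom_G(\ind_K^G W, \ind_K^G W^0)$ to $\varphi \circ \ind_K^G i \in \Hom_G(\ind_K^G W^0, \ind_K^G W^0)$. (This is the only natural way to convert a $W$-to-$W^0$ Hecke operator into a $W^0$-to-$W^0$ Hecke operator using $i$.) Applied to $T'$, this yields precisely the composite $\ind_K^G W^0 \xrightarrow{i} \ind_K^G W \xrightarrow{T'} \ind_K^G W^0$.

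Hence the claim is immediate from the construction: $T = T' \circ (\ind_K^G i)$. There is no essential obstacle here; the content of the proposition is the labelling convention, together with the substantive fact (already established in Proposition \ref{iso}) that this image of $T'$ is indeed a generator of the rank-one module $\cH(W^0)_\mu$, so no information is lost in passing from $T'$ to $T$. In later arguments this factorization will let one transfer statements about the Hecke action on $\ind_K^G W^0$ to statements about $T'$ on $\ind_K^G W$, which is the reason to record it explicitly.
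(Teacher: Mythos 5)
Your proposal is correct and matches the paper exactly: the paper gives no separate argument, since it defines $T \in \cH(W^0)_\mu$ as the image of the generator $T'$ of $\cH(W,W^0)_\mu$ under the map induced by $i$ (Proposition \ref{iso}), which is precisely precomposition with $\ind_K^G i$, and the proposition merely records this. Your added remark that Proposition \ref{iso} guarantees this image generates the rank-one module $\cH(W^0)_\mu$ is also how the paper justifies calling $T$ a generator.
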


\subsection{Hecke algebras for principal series types} \label{heckeps}

In this section, we describe Hecke operators for principal series types.
We use the notation of \S \ref{intps}, with the following abuses of notation: $v^s$ also denotes the image of $v^s \in \ind^K_I \chi^s$ under the natural map $\ind^K_I \chi^s \rarrow \ind^G_I\chi^s$, and $\iota_{s'}^s$ also denotes $\ind^G_K (\iota_{s'}^s)$.
Let $t_1 = \Bigl(\begin{smallmatrix}
p^{-1}&0&0\\ 0&1&0 \\ 0&0&1
\end{smallmatrix} \Bigr)$ and $t_2 = \Bigl(\begin{smallmatrix}
p^{-1}&0&0\\ 0&p^{-1}&0 \\ 0&0&1
\end{smallmatrix} \Bigr)$.
Let $(i,j)$ be a permutation of $(1,2)$.

\begin{prop} \label{normalizer}
The element $r_i t_i v^s$ spans an $I_1$-invariant subspace of $\ind_{I}^G \chi^s$ on which $I$ acts by $\chi^{r_i s}$.
Moreover, the map $v^{r_i s} \mapsto t_i r_i v^s$ induces via Frobenius reciprocity isomorphisms $\nu_{r_i s}^s: \ind_{I}^G \chi^{r_i s} \rarrow \ind_{I}^G \chi^s$.
\begin{proof}
First, since $(t_ir_i)^{-1}I_1t_i r_i$ is $I_1$, $t_i r_i v^s$ spans an $I_1$-invariant subspace.
Second, we have that $t t_i r_i v^s = t_i r_i (r_i^{-1}tr_i) v^s = \chi^{r_i s}(t) t_i r_i v^s$ for any $t\in \TT(\ZZ_p)$.

For the second statement, note that the composition 
\[\nu_{r_i s}^s \circ \nu_s^{r_i s}: \ind_{I}^G \chi^s \rarrow \ind_{I}^G \chi^{r_i s} \rarrow \ind_{I}^G \chi^s,\] which maps $v^s$ to $r_i t_i r_j t_j v^s = \Bigl(\begin{smallmatrix}
p&0&0\\ 0&p&0 \\ 0&0&p
\end{smallmatrix} \Bigr)v^s$, is invertible and hence an isomorphism.
\end{proof}
\end{prop}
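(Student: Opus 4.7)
The plan is to model the argument on that of Proposition \ref{weightcycling}. The essential fact I would need is that $r_i t_i$ normalizes both the pro-$p$ Iwahori $I_1$ and the Iwahori $I$. This can be verified by a direct matrix computation; conceptually, $r_1 t_1$ and $r_2 t_2$ are the non-identity length-zero elements of the extended affine Weyl group of $\GL_3$, i.e.\ they represent the nontrivial rotations of the affine Dynkin diagram of type $\tilde A_2$, and such elements normalize the Iwahori.

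Granting this, the first claim follows quickly. For $g \in I$ I would write
\[
g \cdot (r_i t_i v^s) = (r_i t_i) \cdot \bigl((r_i t_i)^{-1} g (r_i t_i)\bigr) v^s,
\]
and observe that since $(r_i t_i)^{-1} g (r_i t_i) \in I$ acts on $v^s$ by $\chi^s$, the line spanned by $r_i t_i v^s$ is $I$-stable. To identify the character, it suffices to check the identity on diagonal lifts $t$ of elements of $I/I_1 \cong \TT(\FF_p)$, since $\chi^s$ and $\chi^{r_i s}$ are both trivial on $I_1$. As $t_i$ commutes with the diagonal torus, $(r_i t_i)^{-1} t (r_i t_i) = r_i^{-1} t r_i$, and the identity $\chi^s(r_i^{-1} t r_i) = \chi^{r_i s}(t)$ is direct from the definition of $\chi^{r_i s}$ in Section \ref{intps}.

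For the isomorphism, Frobenius reciprocity gives $\nu_{r_i s}^s$ from the first part. Applying the same construction with $s$ replaced by $r_i s$ and $i$ by the unique $j \in \{1,2\}$ with $r_j = r_i^{-1}$ yields a map $\nu_s^{r_i s}: \ind_I^G \chi^s \rarrow \ind_I^G \chi^{r_i s}$ sending $v^s$ to $r_j t_j v^{r_i s}$. The composition $\nu_{r_i s}^s \circ \nu_s^{r_i s}$ is then $G$-equivariant and sends $v^s$ to $(r_i t_i)(r_j t_j) v^s$. A direct matrix multiplication gives $(r_i t_i)(r_j t_j) = pI$, which is a central element of $G$ whose action on $\ind_I^G \chi^s$ is inverted by the action of $p^{-1} I$, hence is a $G$-equivariant automorphism. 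Symmetrically for the other composition, so both $\nu_{r_i s}^s$ and $\nu_s^{r_i s}$ are isomorphisms.

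The main obstacle is purely bookkeeping: nothing is conceptually subtle, but both the normalizer verification and the identification $\chi^s(r_i^{-1} t r_i) = \chi^{r_i s}(t)$ demand careful tracking of the permutation conventions set up in Section \ref{intps}.
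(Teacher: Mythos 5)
Your proposal is correct and follows essentially the same route as the paper: conjugation by $r_i t_i$ (which normalizes $I_1$) identifies the character as $\chi^{r_i s}$ via $\chi^s(r_i^{-1}tr_i)=\chi^{r_i s}(t)$, and the composition $\nu_{r_i s}^s \circ \nu_s^{r_i s}$ sends $v^s$ to $r_i t_i r_j t_j v^s = \mathrm{diag}(p,p,p)v^s$, the action of a central element of $G$, hence an automorphism. Your extra observations (that $r_i t_i$ also normalizes $I$ as a length-zero element of the extended affine Weyl group, and the explicit check of both compositions) are harmless refinements of the same argument.
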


Let $U_i$ be the operator on $\ind_{I}^G \chi^s$ given by $v^s \mapsto \sum_{g\in t_i^{-1}I_1t_i/(t_i^{-1} I_1 t_i \cap I_1)} t_i g v^s$.

\begin{prop} \label{uop}
The composition 
\[\iota_{r_i s}^s\circ \nu_s^{r_i s}:\ind_{I}^G \chi^s \isom \ind_{I}^G \chi^{r_is} \inj \ind_{I}^G \chi^s\]
is the operator $U_j$.
\begin{proof}
From the respective formulas in (\ref{inter}) and Proposition \ref{normalizer}, we see that the composition is defined by 
\begin{align*}
v^{r_i s} \mapsto \sum_{g\in I_1/(r_i I_1 r_i^{-1} \cap I_1)} (t_j r_j) (gr_i)  v^{r_i s} &= \sum_{g\in r_i^{-1}I_1r_i/(r_i^{-1} I_1 r_i \cap I_1)} t_j g v^s\\
&=\sum_{g\in t_j^{-1}I_1t_j/(t_j^{-1} I_1 t_j \cap I_1)} t_j g v^s.
\end{align*}
\end{proof}
\end{prop}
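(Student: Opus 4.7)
The plan is to compute the composition directly on the canonical generator $v^{r_i s}$ and match it with the explicit formula for $U_j^{r_i s}$.

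First I would apply $\iota_{r_i s}^s$ using formula (\ref{inter}): this sends $v^{r_i s}$ to $\sum_{g \in I_1/(r_i I_1 r_i^{-1} \cap I_1)} g r_i v^s$. Next, I would apply $\nu_s^{r_i s}$ term by term. By the analogue of Proposition \ref{normalizer} with the roles of source and target swapped, this map sends $v^s$ to $r_j t_j v^{r_i s}$, where $r_j = r_i^{-1}$ in $S_3$ (so that $r_j \cdot r_i s = s$ and the characters match). Since $g$ acts $\ZZ_p$-linearly on $\ind_I^G \chi^s$, the two maps compose to send $v^{r_i s}$ to $\sum_{g \in I_1/(r_i I_1 r_i^{-1} \cap I_1)} g r_i r_j t_j v^{r_i s}$. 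Using $r_i r_j = e$, this simplifies to $\sum_{g \in I_1/(r_i I_1 r_i^{-1} \cap I_1)} g t_j v^{r_i s}$.

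The remaining and genuinely substantive step is to identify the indexing set with that appearing in the definition of $U_j^{r_i s}$, namely $I_1/(t_j I_1 t_j^{-1} \cap I_1)$. For this I would check that $r_i I_1 r_i^{-1} \cap I_1 = t_j I_1 t_j^{-1} \cap I_1$ by a direct root-subgroup computation: writing $I_1$ as an extension of $T_1 := \ker(\TT(\ZZ_p) \to \TT(\FF_p))$ by a product of root subgroups, both intersections are easily seen to consist of those matrices in $I_1$ whose entries in the root positions ``reversed'' by $r_i$ (equivalently, the positive roots $\alpha$ with $\langle \alpha, \mu_j \rangle > 0$, where $\mu_j$ is the cocharacter defining $t_j$) lie in $p^2 \ZZ_p$ rather than merely $p \ZZ_p$. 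A short direct check for the two cases $(i,j) = (1,2)$ and $(2,1)$ completes this identification.

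With the two indexing sets identified, the sum is precisely $U_j^{r_i s} v^{r_i s}$, and since both sides are determined by their value at $v^{r_i s}$ via Frobenius reciprocity, this gives the equality of operators. The main obstacle is this final combinatorial identification of the coset spaces; everything else is a direct substitution using the explicit formulas of (\ref{inter}) and Proposition \ref{normalizer}.
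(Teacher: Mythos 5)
Your computation of the composition on $v^{r_i s}$ is exactly the paper's argument: apply (\ref{inter}), then Proposition \ref{normalizer} in the form $\nu_s^{r_i s}(v^s) = r_j t_j v^{r_i s}$ with $r_j = r_i^{-1}$, cancel $r_i r_j = e$, and identify the index set of the resulting sum with $I_1/(t_j I_1 t_j^{-1}\cap I_1)$. The only place you diverge is in how you justify that last identification, and there your parenthetical description is off: with $I_1$ the upper-triangular pro-$p$ Iwahori, both $r_i I_1 r_i^{-1}\cap I_1$ and $t_j I_1 t_j^{-1}\cap I_1$ consist of the matrices in $I_1$ whose entries at the \emph{positive}-root positions $\alpha$ with $\langle \alpha,\mu_j\rangle>0$ (i.e.\ $(1,3)$ and $(2,3)$ for $j=2$, $(1,2)$ and $(1,3)$ for $j=1$) lie in $p\ZZ_p$ rather than merely $\ZZ_p$; there is no ``$p^2\ZZ_p$ rather than $p\ZZ_p$'' condition (that would refer to below-diagonal entries, which acquire no extra condition), and the positions in question are the roots reversed by $r_j = r_i^{-1}$, not by $r_i$, under the usual conventions. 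The direct check you propose would surface this and still yield the needed equality, so this is a misstated detail rather than a fatal gap; but note that no root-subgroup computation is needed at all: Proposition \ref{normalizer} (applied with index $j$) already says $r_j t_j$ normalizes $I_1$, whence $t_j I_1 t_j^{-1} = r_j^{-1} I_1 r_j = r_i I_1 r_i^{-1}$, so the two subgroups are literally equal and the two sums have the same index set. This is presumably why the paper passes from one coset sum to the other without comment.
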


\section{Patching} \label{patching}

Following \cite{1305.1594}, we use Kisin's local-global modification of the Taylor--Wiles method to study lattices in completed cohomology in families over local deformation rings.
In this section, we introduce the Taylor--Wiles patching method, as in \cite{1310.0831}, in order to deduce Theorem \ref{patchedmod}.
This result crucially shows that patched modules have a natural action by integral Hecke algebras at $p$.

\subsection{Galois representations and automorphic forms}
We now consider some Galois representations and automorphic forms relevant to our implementation of the Taylor--Wiles patching method.
Let $F$ be a CM field with $F^+$ its totally real subfield such that $F/F^+$ is unramified at all finite places and places of $F^+$ dividing $p$ split in $F$.

\subsubsection{Galois representations} \label{galrep}
Following \cite[\S 2]{MR2470687}, let $\cG/\ZZ$ be the group scheme defined to be the semidirect product of $\GL_3 \times \GL_1$ by the group $\{1,j\}$, which acts on $\GL_3\times \GL_1$ by $j(g,s) = (s\cdot (g^{-1})^t,s)$.
Let $E$ be a finite extension of $\QQ_p$ with ring of integers $\cO_E$, uniformizer $\varpi_E$, and residue field $k_E$.
We consider a continuous Galois representation $\rhobar:G_{F^+} \rarrow \cG(k_E)$ such that $\rbar^{-1} (\GL_3(k_E) \times \GL_1(k_E)) = G_F$.
Then the composition $\rbar:G_F \rarrow \GL_3(k_E) \times \GL_1(k_E) \rarrow \GL_3(k_E)$ of $\rhobar|_{G_F}$ and the first projection is an essentially conjugate self-dual Galois representation.
Recall that $\epsilon$ denotes the cyclotomic character and $\overline{\epsilon}$ its reduction.
Assume that $\rbar^c \cong \rbar^\vee \overline{\epsilon}^{-2}$ where $\cdot^c$ denotes complex conjugation.
Suppose that $\rbar$ ramifies only at places that are split in $F/F^+$. 
\begin{ax} \label{tw}
We say that $\rbar$ satisfies the Taylor--Wiles conditions {\bf (TW)} if
\begin{itemize}
\item $\rbar$ has image containing $\GL_3(\FF)$ for some $\FF \subset k_E$ with $\#\FF > 9$, and
\item $\overline{F}^{\ker \ad \rbar}$ does not contain $F(\zeta_p)$.
\end{itemize}
\end{ax}

Note that the first condition, which is stronger than the usual condition of adequacy (see \cite[Definition 2.3]{MR2979825}), allows us to choose a place $v_1$ of $F^+$ which is prime to $p$ and satisfies the following properties (see \cite[\S 2.3]{1310.0831}). 
\begin{itemize}
 \item $v_1$ splits in $F$ as $v_1 = w_1 w_1^c$
 \item $v_1$ does not split completely in $F(\zeta_p)$. 
 \item $\rbar(\Frob_{w_1})$ has distinct $\FF$-rational eigenvalues, no two of which have ratio $(\N v_1)^{\pm 1}$.
\end{itemize}
See Remark \ref{v1} for an explanation of the choice of $v_1$.

Let $\Sigma$ be a finite set of places of $F^+$ away from $p$ such that $\rbar$ is unramified away from places dividing $p$ and those in $\Sigma$.
Let $S$ be the set of places $v_1$, those in $\Sigma$, and those dividing $p$ in $F^+$.
For each place $v$ in $S$, fix a place $\widetilde{v}$ in $F$ lying over $v$
and let $\widetilde{S}$ be the set of these places $\widetilde{v}$.
For places $w$ of $F$, let $R_w^\square$ be the universal $\mathcal{O}_E$-lifting ring of $\rbar|_{G_{F_w}}$.
For $v\in \Sigma$, let $R_{\widetilde{v}}^{\square,\tau_{\tld{v}}}$ be the reduced $p$-torsion free quotient of $R_{\widetilde{v}}^\square$ corresponding to lifts of the inertial type $\tau_{\tld{v}}$. 

\subsubsection{The ring $R_\infty$}
Let $\delta_{F/F^+}$ denote the quadratic character of $F/F^+$.
Then we define
\[\cS = \(F/F^+,S,\widetilde{S},\cO_E,\rbar, \epsilon^{-2} \delta_{F/F^+}, \{ R^\square_{\widetilde{v}_1} \} \cup \{ R^{\square}_{\widetilde{v}}\}_{v|p} \cup \{ R^{\square,\tau_{\tld{v}}}_{\widetilde{v}} \}_{v \in \Sigma}\)\]
to be the deformation problem in the terminology of \cite{MR2470687}.
There is a universal deformation ring $R_\cS^{\textrm{univ}}$ and a universal $S$-framed deformation ring $R_\cS^{\square_S}$ in the sense of \cite[Definition 1.2.1]{MR2470687}.

Let 
\[R^{\textrm{loc}} = \widehat{\otimes}_{v|p} R^\square_{\widetilde{v}} \widehat{\otimes} \(\widehat{\otimes}_{v\in \Sigma} R^{\square,\tau_{\tld{v}}}_{\widetilde{v}}\) \widehat{\otimes} R^\square_{\widetilde{v}_1}\]
where all completed tensor products are taken over $\cO_E$.
Choose an integer $q\geq 3[F^+:\QQ]$ as in \cite[Proposition 4.4]{MR2979825}.
We introduce the ring \[R_\infty = R^{\textrm{loc}}[[x_1,\ldots ,x_{q-3[F^+:\QQ]}]],\]
over which we construct a patched module of automorphic forms in \S \ref{construction}.

\subsubsection{Automorphic forms on definite unitary groups} \label{aut}
As in \cite[\S 2.3]{1310.0831}, we let $\UU/\mathcal{O}_{F^+}$ be a model for a definite unitary group that is quasisplit at all finite places of $F^+$ and splits over $F$.
For each place $v$ of $F^+$ that splits as $v = ww^c$ in $F$, let $\cO_v$ and $\cO_w$ denote $\cO_{F^+_v}$ and $\cO_{F_w}$, respectively.
Let $\iota_w: \UU(\cO_v) \isom \GL_3(\cO_w)$ be the natural isomorphism.
For $r$ as in \S \ref{galrep}, one defines a corresponding ideal $\lambda \subset \TT$, where $\TT = \TT^{S_p,\textrm{univ}}$ is defined in \cite[\S 2.3]{1310.0831}.

For a compact open subgroup $U \subset \UU(\AA^\infty_{F^+})$ and a $\cO_E[U]$-module $V$, we define the $\cO_E$-module of algebraic modular forms $S(U,V)$ to be the space of functions
\[f: \UU(F^+) \backslash \UU(\AA_{F^+}^\infty) \rarrow V\]
such that $f(gu) = u^{-1}f(g)$ for all $g\in \UU(\AA_{F^+}^\infty)$ and $u\in U$.

For $m\in \NN$, let $U_m = \prod_v U_{m,v} \subset \UU(\AA_{F^+}^\infty)$ be the compact open subgroup where
\begin{itemize}
\item $U_{m,v}= \UU(\mathcal{O}_v)$ for all places $v$ which split in $F$ other than $v_1$ and those dividing $p$;
  \item  $U_{m,v_1}$ is the preimage of the upper triangular matrices under the map
\[\UU(\mathcal{O}_{v_1})\to \UU(k_{v_1}) \underset{\iota_{\tld{v}_1}}\isom \GL_3(k_{\tld{v}_1});\]
\item $U_{m,v}$ is  the kernel of the map
$\UU(\mathcal{O}_v)\to \UU(\mathcal{O}_v/p^m)$ for $v|p$; 
 \item $U_{m,v}$ is a hyperspecial maximal compact open subgroup of $\UU(F_v)$
    if $v$ is inert in $F$.
  \end{itemize}
\begin{remark} \label{v1}
Note that $v_1$ was chosen in \S \ref{galrep} so that for all $t \in \UU(\AA_{F^+}^\infty)$, $t^{-1} \UU(F^+) t \cap U$ does not contain an element of order $p$, necessary for Theorem \ref{patchedmod}(\ref{proj}).
\end{remark}

Let $\rbar$, $\Sigma$, $\widetilde{v}_1$, and $\tau_{\tld{v}}$ be as in \S \ref{galrep}.
For $v\in \Sigma$, denote by $\sigma(\tau_v)$ the $\UU(\mathcal{O}_v) \underset{\iota_{\tld{v}}}{\isom} \GL_3(\mathcal{O}_{\tld{v}})$-representation over $E$ which is the type corresponding to $\tau_{\tld{v}}$ by results towards inertial local Langlands (see \cite[Proposition 6.5.3]{MR2656025}).
Note that $\sigma(\tau_v)$ does not depend on the choice of place ${\tld{v}}|v$ of $F$.
For each $v\in \Sigma$, let $\sigma^\circ(\tau_v)$ be a $\UU(\mathcal{O}_v)$-invariant $\mathcal{O}_E$-lattice in $\sigma(\tau_v)$, and $\overline{\sigma}(\tau_v)$ be its reduction.
Let $\sigma^\circ(\tau)$ be $\otimes_{v\in \Sigma} \sigma^\circ(\tau_v)$ and $\overline{\sigma}(\tau)$ be $\otimes_{v\in \Sigma} \overline{\sigma}(\tau_v)$.
Let $\widehat{H}^0$ and $\overline{H}^0$ be
$\varprojlim_r \varinjlim_m S(U_m,\sigma^\circ(\tau)/\pi_E^r)$ and $\varinjlim_m S(U_m,\overline{\sigma}(\tau))$,
respectively.
These are spaces of algebraic modular forms with level at $\tld{v}_1$, infinite level at places dividing $p$, and coefficients at places in $\Sigma$.

Let $\mathbb{T}^{\Sigma\cup p\cup \{v_1\},\mathrm{univ}}$ be the commutative $\cO_E$-algebra generated by the formal variables $T_w^{(j)}$ for $1\leq j \leq 3$, where $w$ is a place of $F$ lying over a place $v$ of $F^+$ away from $v_1$, $p$, and $\Sigma$ which splits as $ww^c$ in $F$.
Then $\mathbb{T}^{\mathrm{univ}}$ acts naturally on $\widehat{H}^0$ and $\overline{H}^0$ as in \cite[\S 5.3]{MR3134019}.
For a Galois representation $r:G_F \rarrow \GL_3(\cO_E)$ lifting $\rbar$, let $\lambda$ be the ideal generated by the relations given by $\det(XI-r(\Frob_w)) = X^3 - (\NN w) T_w^{(1)}X^2+(\NN w) T_w^{(2)} X - (\NN w) T_w^{(3)}$ for $w$ as above.
Similarly define the ideal $\mathfrak{m} \subset \mathbb{T}^{\mathrm{univ}}$ with respect to $\rbar$ so that $\mathfrak{m} = \lambda + (\varpi_E)$.

\begin{ax}
We say that $r$ is modular {\bf (M)} if 
\[\Hom_{\prod_{v|p}\UU(\cO_v)}(W,\widehat{H}^0)[\lambda]\neq 0\]
for some finite locally algebraic $\cO_E[\prod_{v|p}\UU(\cO_v)]$-module $W$.
\end{ax}

We now assume that $r$ is modular.
In particular, we have that $r^c \cong r^\vee \epsilon^{-2}$, $r$ is unramified outisde of $\Sigma$, and the inertial type of $r|_{G_{F_{\tld{v}}}}$ is $\tau_{\tld{v}}$ for all places $v\in \Sigma$.
Let $\pi$ and $\pibar$ be $\widehat{H}^0[\lambda]$ and $\overline{H}^0[\lambda]$.
\begin{remark} \label{surjchoice}
As explained in the section following \cite[Remark 2.9]{1310.0831}, one chooses a surjection $R_\infty \surj R_\cS^{\square_S}$.
\end{remark}
With the choice in the previous remark fixed, let $\wp \subset \mathfrak{m} \subset R_\infty$ be the inverse images of the ideals in $R_\cS^{\square_S}$ corresponding to $r$ and $\rbar$.
Though an abuse of notation, the two uses of $\mathfrak{m}$ are compatible via the usual map from the deformation ring to the Hecke algebra.
The following theorem summarizes some of the main results of \cite{1310.0831}. The proof is postponed to \S \ref{construction}.

\begin{thm} \label{patchedmod}
There exists an $R_\infty[\![\prod_{v|p} \UU(\cO_v)]\!]$-module $M_\infty$ with a compatible action of $\prod_{v|p} \UU(F^+_v)$ satisfying the following properties. For a $\prod_{v|p} \UU(\cO_v)$-representation $\otimes_{v|p} W_v$ which is finite over $\ZZ_p$, let $M_\infty(\otimes_{v|p} W_v)$ denote 
\[\Hom_{\prod_{v|p} \UU(\cO_v)}(\otimes_{v|p} W_v,M_\infty^\vee)^\vee,\] where $\cdot^\vee$ denotes the Pontrjagin dual.
Then 
\begin{enumerate}
\item $M_\infty$ is projective as an $\cO_E[\![\prod_{v|p} \UU(\cO_v)]\!]$-module; \label{proj}
\item for $\otimes_{v|p} W_v$ as above which is either a $p$-power-torsion or $p$-torsion-free locally algebraic representation, $M_\infty(\otimes_{v|p} W_v)$ is maximal Cohen-Macaulay over its scheme-theoretic support in $\mathrm{Spec} R_\infty$ or is $0$; and \label{cm}
\item $\pi = M_\infty^d[\wp]$ and $\pibar = M_\infty^\vee[\mathfrak{m}]$, where $\cdot^d$ denotes the Schikhof dual (see \cite[\S 1.8]{1310.0831}). \label{special}
\end{enumerate}
\end{thm}

\begin{remark} \label{schikhof}
By \cite[Lemma 4.14]{1310.0831}, if $\otimes_v W_v$ is $\varpi_E$-torsion-free, then 
\[M_\infty(\otimes_v W_v) \cong \Hom_K(\otimes_v W_v,M_\infty^d)^d.\]
Note that in \cite{1310.0831}, $M_\infty(\otimes_v W_v)$ is only defined for $p$-torsion-free objects and the definition with Schikhof duality is used instead.
\end{remark}

\subsection{The construction of patched modules} \label{construction}
In this section, we describe the construction of the patched module $M_\infty$ satisfying the properties in Theorem \ref{patchedmod}.
The main difference from \cite{1310.0831} is that we allow ramification away from $p$, and we patch at all places dividing $p$ simultaneously.
The necessary modifications are straightforward and the proofs are identical.

\subsubsection{Compact open subgroups and Taylor--Wiles primes} \label{compact}

Recall the definition of $U_{m,v}$ for $m\in \NN$ and $v$ a finite place of $F^+$ from \S \ref{aut}.
We now choose Taylor--Wiles primes.
Recall that in \S \ref{galrep}, we chose an integer $q$.
As in \cite[\S 2.6]{1310.0831} (which follows \cite[\S 4]{MR2979825}), for each $N \geq 1$ we use ({\bf TW}) to choose a set $Q_N$ of $q$ places of $F^+$ satisfying the following properties: for each place $v\in Q_N$,
\begin{itemize}
\item $v$ is not in $\Sigma$, does not divide $p$, and splits in $F$;
\item $\N v \equiv 1 \mod p^N$ 
\item if $w|v$ is a place of $F$, then $\rbar|_{G_{F_w}}$ has a nonzero direct summand $\overline{\psi}_w$ which is an eigenspace on which Frobenius acts semisimply.
\end{itemize}
For each $v\in Q_N$, choose a place $\tld{v}|v$ of $F$ and let $\tld{Q}_N = \{\tld{v}:v\in Q_N\}$.
For $v \in Q_N$, we let $U_1(Q_N)_v \subset \UU(\cO_v)$ be the corresponding parahoric compact open subgroup defined in \cite[\S 5.5]{MR3134019}.
Let $U_1(Q_N)_{m,v} = U_{m,v}$ for $v \not\in Q_N$ and $U_1(Q_N)_{m,v} = U_1(Q_N)_v$ for $v\in Q_N$. We define the compact open subgroup $U_1(Q_N)_m  = \prod_v U_1(Q_N)_{m,v} \subset \UU(\AA_{F^+}^\infty)$.
Recall from \S \ref{aut} the definition of the set of places $\Sigma$ and $\sigma^\circ(\tau)$.

For each $v\in Q_N$, let $R^{\psibar_{\widetilde{v}}}_{\widetilde{v}}$ be the quotient of $R^\square_{\widetilde{v}}$ defined in \cite[\S 5.5]{MR3134019}.
We let $\cS_{Q_N}$ be the deformation problem
\begin{align*}
\cS_{Q_N} = (&F/F^+,S \cup Q_N,\widetilde{S} \cup \widetilde{Q}_N, \cO_E , \rbar, \epsilon^{-2} \delta_{F/F^+}, \\
&\{ R^\square_{\widetilde{v}_1} \} \cup \{ R^{\square}_{\widetilde{v}}\}_{v|p} \cup \{ R^{\square,\tau_{\tld{v}}}_{\widetilde{v}} \}_{v \in \Sigma} \cup \{R^{\psibar_{\widetilde{v}}}_{\widetilde{v}}\}_{v\in Q_n} ).
\end{align*}
Again, we have the universal deformation ring $R_{\cS_{Q_N}}^{\textrm{univ}}$ and the universal $S$-framed deformation ring $R_{\cS_{Q_N}}^{\square_S}$.
The above properties of the set $Q_N$ crucially guarantee that $R_{\cS_{Q_N}}^{\square_S}$ is topologically generated over $R^{\mathrm{loc}}$ by $q-[F^+:\QQ]n(n-1)/2$ elements.
Let $M_{1,Q_N} = \textrm{pr}\( (S(U_1(Q_N)_{2N},\sigma^\circ(\tau)/\varpi^N)_{\mathfrak{m}_{Q_N}}\)^\vee$ where $\cdot^\vee$ denotes the Pontrjagin dual, $\textrm{pr}$ is defined in \cite[\S 2.6]{1310.0831}, and $\mathfrak{m}_{Q_N}\subset \mathbb{T}^{\Sigma\cup p\cup \{v_1\}\cup Q_N,\mathrm{univ}}$ is the maximal ideal defined as in \S \ref{aut}.
Let 
\[M_{1,Q_N}^\square = M_{1,Q_N} \otimes_{R_{\cS_{Q_N}}^{\textrm{univ}}} ~R_{\cS_{Q_N}}^{\square_S}\]
and \[S_\infty = \cO_E [[z_1,\ldots,z_{9\#S},y_1,\ldots,y_q]].\]
As in \cite[\S 2.8]{1310.0831}, we patch the spaces $M_{1,Q_N}^\square$ together using the Taylor--Wiles method to get an $R_\infty[[\prod_{v|p} \UU(\cO_v)]]$-module $M_\infty$.

\begin{proof}[Proof of Theorem \ref{patchedmod}]
For part (\ref{proj}) of Theorem \ref{patchedmod}, the proof of \cite[Proposition 2.10]{1310.0831} shows that $M_\infty$ is in fact projective as an $S_\infty[[\prod_{v|p} \UU(\cO_v)]]$-module and admits a $\prod_{v|p} \UU(F^+_v)$-action that extends the action of $\prod_{v|p} \UU(\cO_v)$.

Part (\ref{cm}) follows from \cite[Lemma 4.18(1)]{1310.0831}.
Part (\ref{special}) follows directly from the construction of $M_\infty$ (see \cite[\S 2.8]{1310.0831}).
\end{proof}

\section{Cyclicity of patched modules} \label{cyclic}
We deduce axiomatic mod $p$ multiplicity one results (see Theorems \ref{cyclicalg} and \ref{cyclicps}) for upper alcove algebraic vectors and tame principal series types.
Let $F/F^+$ and $r$ and $\rbar$ be as in \S \ref{patching}.
Throughout \S \ref{cyclic}-\ref{lattice}, assume that $p$ splits completely in $F$.
For each place $v|p$ of $F^+$, choose a place $\tld{v}|v$ of $F$.
For $\UU(\cO_v) \underset{\iota_{\tld{v}}}\isom\GL_3(\cO_{\tld{v}})$-representations $V_{\tld{v}}$, let $M_\infty(\otimes_v V_{\tld{v}})$ be defined as in Theorem \ref{patchedmod}.
Fix a place $v|p$ of $F^+$.
In what follows, we will sometimes fix a choice of $\GL_3(\cO_{\tld{v}'})$-representation $V_{\tld{v}'}$ for each $v' \neq v$ and simply write $M_\infty(V_{\tld{v}})$ for a $\GL_3(\cO_{\tld{v}})$-representation $V_{\tld{v}}$ to denote $M_\infty(V_{\tld{v}} \otimes_{v' \neq v} V_{\tld{v}'})$.

\subsection{A key isomorphism} \label{keyiso}
In this section, we prove Lemma \ref{main}, which provides the key input in proving cyclicity of patched modules for upper alcove algebraic representations.
Suppose that $\rbar$ satisfies {\bf (TW)} and $r$ satisfies {\bf (M)} from \S \ref{galrep} and \S \ref{aut}.
Fix a place $v|p$ of $F^+$.
We say that $\rbar$ is modular of $\GL_3(\cO_{\tld{v}})$-weight $\sigma$ at $\tld{v}$ if $\Hom_{\GL_3(\cO_{\tld{v}})}(\sigma, \pibar)$ is nonzero.
Note that since we define $\mathfrak{m}$ in terms of $\rbar$ rather than $\rbar^\vee$ and use $\Hom_{\GL_3(\cO_{\tld{v}})}(\sigma, \pibar)$ rather than $(\sigma\otimes \pibar)^{\GL_3(\cO_{\tld{v}})}$, our definition agrees with that of \cite{herzig,MR3079258}.
We denote the set of modular Serre weights for $\rbar$ at $\tld{v}$ by $W_{\tld{v}}(\rbar)$.
Note that the corresponding set of $\UU(\cO_v)$-representations does not depend on the choice of $\tld{v}$.
The following is an immediate corollary of Theorem \ref{patchedmod}(\ref{special}).

\begin{prop} \label{support}
A weight $\sigma$ belongs to $W_{\tld{v}}(\rbar)$ if and only if $M_\infty(\sigma) \neq 0$ for some choice of $V_{\tld{v}'}$.
\end{prop}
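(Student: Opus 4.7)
The plan is to deduce this as a short formal consequence of Theorem \ref{patchedmod}(\ref{special}), which identifies $\pibar$ with $M_\infty^\vee[m]$. For any finite-dimensional $k_E$-representation $W = \sigma \otimes \bigotimes_{v' \neq v} V_{v'}$ of $\prod_{v'|p} K_{v'}$, the fact that the $R_\infty$-action on $M_\infty^\vee$ commutes with that of $\prod_{v'|p} K_{v'}$ together with Pontrjagin duality give natural identifications
\[\Hom_{\prod_{v'|p} K_{v'}}(W, \pibar) = \Hom_{\prod_{v'|p} K_{v'}}(W, M_\infty^\vee)[m] = M_\infty(W)^\vee[m] = (M_\infty(W)/m M_\infty(W))^\vee,\]
using the standard identity $N^\vee[m] = (N/mN)^\vee$ for an $R_\infty$-module $N$ in the last step, and the very definition of $M_\infty(W)$ in the middle step. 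Nakayama applied to the finitely generated $R_\infty$-module $M_\infty(W)$ (finite generation following from the projectivity of $M_\infty$ over $\cO_E[[\prod_{v'|p} K_{v'}]]$ in Theorem \ref{patchedmod}(\ref{proj})) then shows that this Hom is nonzero if and only if $M_\infty(W) \neq 0$.

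Next I would observe that $\Hom_{K_v}(\sigma, \pibar)$ carries a smooth action of $\prod_{v' \neq v,\, v'|p} K_{v'}$ over $k_E$. Since every vector in such a smooth representation has open stabilizer and hence finite orbit under the compact group $\prod_{v' \neq v} K_{v'}$, the subrepresentation it generates is finite-dimensional over $k_E$ and in particular contains an irreducible subrepresentation. After enlarging $E$ if necessary so that $k_E$ splits the relevant finite quotients, such an irreducible subrepresentation factors as a tensor product $\bigotimes_{v' \neq v} V_{v'}$ of irreducible $K_{v'}$-representations. Therefore $\Hom_{K_v}(\sigma,\pibar) \neq 0$ if and only if $\Hom_{\prod_{v'|p} K_{v'}}(\sigma \otimes \bigotimes_{v' \neq v} V_{v'}, \pibar) \neq 0$ for some choice of $V_{v'}$. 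Combined with the display above and the definition of $W_v(\rhobar)$, this yields the claimed equivalence.

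The proof is essentially formal once one has Theorem \ref{patchedmod}(\ref{special}). The only genuine technical inputs are finite generation of $M_\infty(W)$ over $R_\infty$ for $W$ finite over $k_E$ (needed for Nakayama), which is built into the construction of $M_\infty$, and the existence of an irreducible $\prod_{v' \neq v} K_{v'}$-subrepresentation of a smooth $k_E$-representation, which reduces to finite-dimensional representation theory via the finite orbit argument.
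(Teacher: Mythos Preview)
Your argument is correct and is exactly the unwinding of what the paper means when it declares the proposition an ``immediate corollary of Theorem \ref{patchedmod}(\ref{special})''; the paper gives no further proof. Your identification $\Hom_{\prod K_{v'}}(W,\pibar)\cong (M_\infty(W)/mM_\infty(W))^\vee$ together with Nakayama and the tensor--hom adjunction to pass between $\Hom_{K_v}(\sigma,\pibar)$ and $\Hom_{\prod K_{v'}}(\sigma\otimes\bigotimes V_{v'},\pibar)$ are precisely the steps implicit in that one-line reference.
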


We introduce the following axiom.

\begin{ax}
We say that $\rbar$ satisfies weight elimination {\bf (WE)} for $\mu$ at $\tld{v}$ if $\Ind_{\widetilde{P}_\mu}^K \sigma'_{N_\mu}/\overline{W}$ has no Jordan--H\"older factors in $W_{\tld{v}}(\rbar)$.
\end{ax}

\begin{example}\label{ex:we}
Suppose that $r|_{F_{\tld{v}}}$ is irreducible with $\rbar|_{I_{\tld{v}}} \cong \psi \oplus \psi^p \oplus \psi^{p^2}$ where $\psi \cong \omega_3^{-(a_{\tld{v}}+1)-p (b_{\tld{v}}+1) - p^2(c_{\tld{v}}+1)}$.
Then, $\rbar$ satisfies {\bf (WE)} for $\mu_1$ at $\tld{v}$ by \cite[Theorem 7.5.5]{MR3079258} (see \cite[Theorem 5.1 and Conjecture 6.6]{herzigthesis} for a formula for $W^?(\rbar)$).
\end{example}

\begin{remark} \label{eliminate}
By Proposition \ref{support} and Theorem \ref{patchedmod}(\ref{proj}), if $\rbar$ satisfies {\bf (WE)} for $\mu$ at ${\tld{v}}$ then $M_\infty(\Ind_{\widetilde{P}_\mu}^K \sigma'_{N_\mu}/\overline{W}) = 0$ and $M_\infty(\ker(\Ind_{\widetilde{P}_{-\mu}}^K \sigma'^{N_{-\mu}} \rarrow \overline{W}^0)) = 0$ for all choices of $V_{v'}$.
\end{remark}

An element of $\mathcal{H}(\overline{W},\overline{W}^0)$ induces an $R_\infty$-homomorphism 
\[\Hom_{G\times \prod_{v'\neq v}\UU(\cO_{v'})}(\ind_K^G \overline{W}^0 \otimes \otimes_{v'\neq v} V_{\tld{v}'},M_\infty^\vee) \rarrow \Hom_{G\times \prod_{v'\neq v}\UU(\cO_{v'})}(\ind_K^G \overline{W} \otimes \otimes_{v'\neq v} V_{\tld{v}'},M_\infty^\vee),\]
and by duality an element of $\Hom_{R_\infty}(M_\infty(\overline{W}),M_\infty(\overline{W}^0))$.
This defines a ring homomorphism $\mathcal{H}(\overline{W},\overline{W}^0) \rarrow \Hom_{R_\infty}(M_\infty(\overline{W}),M_\infty(\overline{W}^0))$.
It is here that we use that the $R_\infty[\![\prod_{v'|p} \UU(\cO_{v'})]\!]$-module $M_\infty$ in Theorem \ref{patchedmod} has a compatible $\prod_{v'|p} \UU(F^+_{v'})$-action.
By abuse of notation, we also denote the image of $T'$ in $\Hom_{R_\infty}(M_\infty(W_1),M_\infty(W_2))$ by $T'$.

\begin{lem} \label{main}
Suppose that $\rbar$ satisfies {\bf (WE)} for $\mu$ at ${\tld{v}}$.
Then the $R_\infty$-module homomorphism $T': M_\infty(\overline{W}) \rarrow M_\infty(\overline{W}^0)$ is an isomorphism.
\begin{proof}
By Proposition \ref{extensioncycling} and Theorem \ref{patchedmod}, the map $T': M_\infty(\overline{W}) \rarrow M_\infty(\overline{W}^0)$ can be written as the composition
\[ M_\infty(\overline{W}) \rarrow M_\infty(\Ind_{\widetilde{P}_\mu}^K \sigma_{N_\mu}) \rarrow M_\infty(\Ind_{\widetilde{P}_{-\mu}}^K \sigma^{N_{-\mu}}) \rarrow M_\infty(\overline{W}^0).\]
By Frobenius reciprocity and Proposition \ref{weightcycling}, the second morphism is an isomorphism.
By the exactness of $M_\infty(\cdot)$ from Theorem \ref{patchedmod}(\ref{proj}) and Remark \ref{eliminate}, the first and third morphisms are isomorphisms.
\end{proof}
\end{lem}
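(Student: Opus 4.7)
The plan is to decompose $T'$ as a three-step composition on $M_\infty(\cdot)$ and show that each arrow is an isomorphism. First I would apply Proposition \ref{extensioncycling} to the given $T' \in \cH(W_1,W_2)_\mu$: by the inclusion $W_1 \inj \Ind_{\widetilde{P}_\mu}^K \sigma'_{N_\mu}$ and the dual surjection $\Ind_{\widetilde{P}_{-\mu}}^K (\sigma')^{N_{-\mu}} \surj W_2$ noted in Section \ref{heckealg}, the element $T'$ corresponds via Proposition \ref{satake} to a map factoring through $\sigma'_{N_\mu} \cong (\sigma')^{N_{-\mu}}$. Proposition \ref{extensioncycling} then writes $T'$ as the composition
\[
\ind_K^G W_1 \rarrow \ind_K^G \Ind_{\widetilde{P}_\mu}^K \sigma'_{N_\mu} \xrightarrow{T'_{\sigma'}} \ind_K^G \Ind_{\widetilde{P}_{-\mu}}^K (\sigma')^{N_{-\mu}} \rarrow \ind_K^G W_2.
\]
Applying the functor $M_\infty(\cdot)$, which is exact by Theorem \ref{patchedmod}(\ref{proj}), produces the analogous factorization of $T'\colon M_\infty(W_1) \rarrow M_\infty(W_2)$.

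Next I would dispose of the middle arrow using Proposition \ref{weightcycling}: $T'_{\sigma'}$ is already an isomorphism of $G$-representations, so the induced map on $M_\infty$ is an isomorphism. For the outer arrows I would appeal to Remark \ref{eliminate}, which is precisely the translation of \textbf{(WE)} into the vanishings
\[
M_\infty\!\left(\Ind_{\widetilde{P}_\mu}^K \sigma'_{N_\mu}/W_1\right) = 0 \quad\text{and}\quad M_\infty\!\left(\ker\!\left(\Ind_{\widetilde{P}_{-\mu}}^K (\sigma')^{N_{-\mu}} \surj W_2\right)\right) = 0.
\]
Combined with the exactness of $M_\infty(\cdot)$ applied to the short exact sequences defining the cokernel and kernel above, these vanishings force the first and third arrows to be isomorphisms, and composing the three isomorphisms yields the lemma.

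The main obstacle is really preparatory rather than conceptual. One needs Proposition \ref{extensioncycling} to isolate the single Jordan-H\"older constituent $\sigma'$ through which $T'$ can be routed, Proposition \ref{weightcycling} to upgrade that weight-level factorization into a genuine isomorphism of compact inductions, and Remark \ref{eliminate} to package \textbf{(WE)} symmetrically so that it simultaneously kills the cokernel on the $W_1$ side and the kernel on the $W_2$ side of the factorization. Once these three inputs are aligned, the proof is a short diagram chase and requires no further information from the Galois or automorphic side.
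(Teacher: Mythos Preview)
Your proposal is correct and follows essentially the same approach as the paper's proof: factor $T'$ through $T'_{\sigma'}$ via Proposition \ref{extensioncycling}, use Proposition \ref{weightcycling} for the middle isomorphism, and use exactness of $M_\infty(\cdot)$ together with Remark \ref{eliminate} for the outer two. In fact your write-up is slightly more careful with notation, correctly using $\sigma'$ (the socle of $W_1$) where the paper's displayed composition drops the prime.
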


\subsection{Multiplicity one for extensions}

From Lemma \ref{main}, we deduce cyclicity of certain patched modules, extending the method of \cite{MR1440309} and \cite{math/0602606}.
Recall that $\tau_{\tld{v}}$ is the inertial type of $r$ at $\tld{v}$ for each place $v\in \Sigma$.

\begin{ax}
We say that $r$ has regular type {\bf (RT)} if $\widehat{\otimes}_{v\in \Sigma} R^{\square,\tau_{\tld{v}}}_{\widetilde{v}}$ is a regular local ring.
\end{ax}

\begin{remark}
We note that the axiom {\bf (RT)} is satisfied if $r$ is minimally ramified in the sense of \cite[Definition 2.4.14]{MR2470687} by \cite[Corollary 2.4.21]{MR2470687}.
\end{remark}

Suppose that for each place $v|p$ of $F$, $\sigma_{\tld{v}}$ is a Serre weight $F(x_{\tld{v}}, y_{\tld{v}}, z_{\tld{v}})$ with $x_{\tld{v}}\geq y_{\tld{v}}\geq z_{\tld{v}}$ and $x_{\tld{v}}-z_{\tld{v}} < p-3$. The following theorem is a well-known consequence of the method of Diamond and Fujiwara.

\begin{thm} \label{multfl}
Assume that $r$ satisfies {\bf(RT)}.
Then $M_\infty(\otimes_v \sigma_{\tld{v}})$ is cyclic over $R_\infty$ or $0$.
\begin{proof}
Let $R_\infty(\otimes_v W(x_{\tld{v}}, y_{\tld{v}}, z_{\tld{v}}))'$ be the quotient 
\[\widehat{\otimes}_{v|p} R^{\psi_{\tld{v}},\square}_{\widetilde{v}} \widehat{\otimes} \(\widehat{\otimes}_{v\in \Sigma} R^{\square,\tau_{\tld{v}}}_{\widetilde{v}}\) \widehat{\otimes} R^\square_{\widetilde{v}_1}[[x_1,\ldots ,x_{q-3[F^+:\QQ]}]]\]
of $R_\infty$, where $R^{\psi_{\tld{v}},\square}_{\widetilde{v}}$ is the crystalline lifting ring of $\rbar|_{G_{F_{\tld{v}}}}$ of Hodge--Tate weights $(x_{\tld{v}}+2,y_{\tld{v}}+1,z_{\tld{v}})$.
Since the Hodge--Tate weights $(x_{\tld{v}}+2,y_{\tld{v}}+1,z_{\tld{v}})$ are in the Fontaine-Laffaille range, $R^{\psi_{\tld{v}},\square}_{\widetilde{v}}$ is formally smooth over $\cO$ by \cite[Lemmas 2.4.27 and 2.4.28]{MR2470687}.
By the choice of $v_1$, $R^\square_{\widetilde{v}_1}$ is formally smooth over $\cO$ by \cite[Lemma 2.5]{MR2470687}.
Finally, $R^{\square,\tau_{\tld{v}}}_{\widetilde{v}}$ is formally smooth over $\cO$ for all $v\in \Sigma$ by {\bf (RT)}.
We conclude that $R_\infty(\otimes_v W(x_{\tld{v}}, y_{\tld{v}}, z_{\tld{v}}))'$ is formally smooth over $\cO$.

Assume that $M_\infty(\otimes_v W(x_{\tld{v}},y_{\tld{v}},z_{\tld{v}}))$ is nonzero.
Then by the proof of \cite[Lemma 4.18]{1310.0831}, it is maximal Cohen--Macaulay over $R_\infty(\otimes_v W(x_{\tld{v}}, y_{\tld{v}}, z_{\tld{v}}))'$.
By \cite[Theorem 2.1]{MR1440309}, $M_\infty(\otimes_v W(x_{\tld{v}},y_{\tld{v}},z_{\tld{v}}))$ is a free module over $R_\infty(\otimes_v W(x_{\tld{v}}, y_{\tld{v}}, z_{\tld{v}}))'$.
By the proof of \cite[Lemma 4.18(1)]{1310.0831}, $M_\infty(\otimes_v W(x_{\tld{v}},y_{\tld{v}},z_{\tld{v}}))[1/p]$ is locally free of rank at most one over $R_\infty(\otimes_v W(x_{\tld{v}}, y_{\tld{v}}, z_{\tld{v}}))'[1/p]$.
Thus $M_\infty(\otimes_v W(x_{\tld{v}},y_{\tld{v}},z_{\tld{v}}))$, and therefore its reduction $M_\infty(\otimes_v \sigma_{\tld{v}})$, is free of rank one over its support.
\end{proof}
\end{thm}

We fix a place $v|p$ of $F^+$ and continue to use the notation of \S \ref{keyiso}.
Suppose that $V_{\tld{v}'}$ is $p$-torsion free for all $v' \neq v$.

\begin{lem} \label{extcyclic}
Suppose that $\rbar$ satisfies {\bf (WE)} for $\mu$ at $\tld{v}$ for some minuscule coweight $\mu$.
Let $R' \subset \End_{R_\infty}(M_\infty(W^0))$ be a commutative local $R_\infty$-algebra which commutes with the action of $T$, and let $R$ be $R'[T]$.
If $M_\infty(\sigma)$ is nonzero, then $R$ is a local ring.
\begin{proof}
By the definition of $T$ in \S \ref{heckealg}, we have the commutative diagram
\begin{center}
\begin{tikzcd}
0 \arrow{r}{} &M_\infty(\sigma') \arrow{r}{} \ar[-, double equal sign distance]{d}{} &M_\infty(\overline{W}) \arrow{r}{} \arrow{d}{T'} &M_\infty(\sigma) \arrow{r}{} \arrow{d}{} &0 \\
0 \arrow{r}{} &M_\infty(\sigma') \arrow{r}{} &M_\infty(\overline{W}^0) \arrow{r}{} &M_\infty(\overline{W}^0)/T \arrow{r}{} &0,
\end{tikzcd}
\end{center}
where the map $M_\infty(\sigma') \rarrow M_\infty(\overline{W}^0)$ is defined to be the composition of the map $M_\infty(\sigma') \rarrow M_\infty(\overline{W})$ and $T'$.
The middle vertical arrow is an isomorphism by Lemma \ref{main}, and hence so is the right vertical arrow. The module $M_\infty(\sigma)$ is cyclic and nonzero by assumption.
Let $\mathfrak{a}= (z_1,\ldots,z_{9\# S},y_1,\ldots ,y_q)$ be the augmentation ideal of $S_\infty$ in the notation of \S \ref{construction}.
Then $M_\infty(W^0)/\mathfrak{a}$ is $p$-torsion free by \cite[Corollary 2.11]{1310.0831}.
By definition of $T$ and \cite[Theorem 1.2]{MR2954105}, $T$ acts on $(M_\infty(W^0)/\mathfrak{a}) \otimes_{\cO_E} E$ semisimply.
Since $M_\infty({W}^0)/(\mathfrak{a},T)$ is nonzero, each eigenvalue of the action of $T$ on $M_\infty({W}^0)/\mathfrak{a} \otimes_{\cO_E} E$ must have positive valuation.
We conclude that the image of $T$ in $R/\mathfrak{a}$ must be in the Jacobson radical.
Since $R$ is finite over $R_\infty$ (because $\End_{R_\infty}(M_\infty)$ is), any maximal ideal of $R$ must contain $\mathfrak{a}$.
Thus, $T$ is in the Jacobson radical of $R$, and $R$ is local.
\end{proof}
\end{lem}

\subsection{Multiplicity one for locally algebraic types} \label{multloc}

In this section, we prove some mod $p$ multiplicity one results for upper alcove algebraic vectors and principal series types.
For $v'|p$, let $a_{\tld{v}'}, b_{\tld{v}'},$ and $c_{\tld{v}'}$ be integers such that $a_{\tld{v}'} > b_{\tld{v}'} > c_{\tld{v}'}$ and $a_{\tld{v}'} - c_{\tld{v}'} < p-1$.
Let $\sigma_{\tld{v}'} = F(a_{\tld{v}'}-1, b_{\tld{v}'},c_{\tld{v}'}+1)$ and $\sigma'_{\tld{v}'} = F(c_{\tld{v}'}+p-1, b_{\tld{v}'},a_{\tld{v}'}-p+1)$.

\begin{ax}
We say that $\rbar$ is modular of weight $\otimes_{v'} \sigma_{\tld{v}'}$ {\bf (M')} if \[\Hom_{\prod_{v'|p}\UU(\cO_{v'})}(\otimes_{v'} \sigma_{\tld{v}'},\pibar) \neq 0.\]
\end{ax}

\noindent By Theorem \ref{patchedmod}(\ref{special}), {\bf (M')} holds if and only if $M_\infty(\otimes_{v'} \sigma_{\tld{v}'}) \neq 0$.

\subsubsection{The case of algebraic vectors}
We continue to denote by $v$ a fixed place of $F^+$ dividing $p$ and continue to use the notation of \S \ref{keyiso}.
Define $W_{\tld{v}'}$, $W^0_{\tld{v}'}$, $T_{\tld{v}'}$, etc.~as in \S \ref{algvect}.
Let $R^\mathrm{alg} \subset \End_{R_\infty}(M_\infty(\otimes_{v'} W^0_{\tld{v}'}))$ be the subring generated by the images of $R_\infty$ and $T_{\tld{v}'}$.

\begin{thm} \label{cyclicalg}
For each place $v'|p$, let $\mu_{\tld{v}'}$ be some minuscule coweight.
Suppose that $\rbar$ satisfies {\bf(M')}, {\bf(RT)}, and {\bf (WE)} for $\mu_{\tld{v}'}$ at $\tld{v}'$.
Then $R^\mathrm{alg}$ is a local ring and $M_\infty(\otimes_{v'} W^0_{\tld{v}'})$ is free of rank one over $R^\mathrm{alg}$.
\begin{proof}
By changing $R'$ and $R$ as necessary, Lemma \ref{extcyclic} inductively shows that $R^\mathrm{alg}$ is a local ring.
Moreover, by the proof of Lemma \ref{extcyclic}, $M_\infty(\otimes_{v'} W^0_{\tld{v}'}/(\varpi_E,T_{\tld{v}'})_{\tld{v}'})$ is isomorphic to $M_\infty(\otimes_{v'} \sigma_{\tld{v}'})$.
By Nakayama's lemma and Theorem \ref{multfl}, $M_\infty(\otimes_{v'} W^0_{\tld{v}'})$ is a cyclic $R^\mathrm{alg}$-module.
By definition, it is also a faithful $R^\mathrm{alg}$-module, and is thus free of rank one.
\end{proof}
\end{thm}

\subsubsection{The case of principal series types}\label{subsec:pstype}
For places $v'|p$ of $F^+$, let $\tau_{\tld{v}'} = \ind_I^K \chi_{\tld{v}'}$, where $\chi_{\tld{v}'} = \eta^{a_{\tld{v}'}} \otimes \eta^{b_{\tld{v}'}} \otimes \eta^{c_{\tld{v}'}}$.
So $\tau_{\tld{v}'}$ is a tame principal series $\GL_3(\cO_{\tld{v}'})$-type.
\begin{ax}
We say that $\rbar$ satisfies weight elimination {\bf (WE)} for $\tau_{\tld{v}}$ and $\tld{v}$ if 
\begin{align*}
W_{\tld{v}}(\rbar) \cap \mathrm{JH}(\taubar_{\tld{v}}) \subset \{&F(a_{\tld{v}}-1,b_{\tld{v}},c_{\tld{v}}+1), F(b_{\tld{v}},c_{\tld{v}},a_{\tld{v}}-p+1), \\
&F(b_{\tld{v}}+p-1,a_{\tld{v}},c_{\tld{v}}), F(c_{\tld{v}}+p-1,b_{\tld{v}},a_{\tld{v}}-p+1)\}.
\end{align*}
\end{ax}


\begin{lem} \label{lem:cyclicps}
Suppose that $\rbar$ satisfies {\bf (M')}, {\bf (RT)}, and {\bf (WE)} for $\taubar_{{\tld{v}}}$ and $\tld{v}$ for some $v|p$.
Let $R'\subset \End_{R_\infty}(M_\infty(\tau^{s_2}))$ be a commutative local $R_\infty$-algebra which commutes with the actions of $U_1$ and $U_2$, and let $R$ be $R'[U_1,U_2]$.
If $M_\infty(\sigma)$ is nonzero, then $R$ is a local ring.
\end{lem}
\begin{proof}
The proof is similar to Lemma \ref{extcyclic}.
By Proposition \ref{uop}, we have isomorphisms
\[M_\infty(\tau^{s_2}_{\tld{v}}/\iota_{s_1}^{s_2}(\tau^{s_1}_{{\tld{v}}})) \isom M_\infty(\tau^{s_2}_{{\tld{v}}})/U_{2,{\tld{v}}}\]
and
\[M_\infty(\tau^{s_2}_{{\tld{v}}}/\iota_{w_0}^{s_2}(\tau^{w_0}_{{\tld{v}}})) \isom M_\infty(\tau^{s_2}_{\tld{v}})/U_{1,{\tld{v}}}.\]
The left sides are nonzero since $M_\infty(\sigma_{\tld{v}})$ is.
Replacing $T$ in the proof of Lemma \ref{extcyclic} by $U_1$ and $U_2$, we see that $R$ is local.
\end{proof}

Let 
$R^\mathrm{ps} \subset \End_{R_\infty}(M_\infty(\otimes_{v'| p} \tau_{{\tld{v}}'}^{s_2}))$
denote the subring generated by the image of $R_\infty$ and the Hecke operators $U_{2,{\tld{v}}'}$ and $U_{1,{\tld{v}}'}$ for all $v'|p$.

\begin{thm} \label{cyclicps}
Assume that $\rbar$ satisfies {\bf (M')}, {\bf (RT)}, and {\bf (WE)} for $\taubar_{{\tld{v}}'}$ and $\tld{v}'$ for all $v'|p$.
Then $R^\mathrm{ps}$ is a local ring and the $R^\mathrm{ps}$-module $M_\infty(\otimes_{v'|p} \tau^{s_2}_{{\tld{v}}'})$ is free of rank one.
\begin{proof}
By changing $R'$ and $R$ as necessary, Lemma \ref{lem:cyclicps} inductively shows that $R^\mathrm{ps}$ is a local ring.
By {\bf (WE)} for $\tau_{\tld{v}'}$ and $\tld{v}'$ for all $v'|p$ and Proposition \ref{submodule}, the natural inclusion 
\[M_\infty(\otimes_{v'|p}\sigma_{\tld{v}'}) \rarrow M_\infty(\otimes_{v'|p} \taubar^{s_2}_{\tld{v}'}/(\iota_{s_1}^{s_2}(\taubar^{s_1}_{\tld{v}'})+\iota_{w_0}^{s_2}(\taubar^{w_0}_{\tld{v}'}))) \cong M_\infty(\otimes_{v'|p} \taubar^{s_2}_{\tld{v}'})/(U_{1,\tld{v}'},U_{2,\tld{v}'})_{v'|p}\]
is an isomorphism.
By Nakayama's lemma and Theorem \ref{multfl}, $M_\infty(\otimes_{v'|p} \tau^{s_2}_{{\tld{v}}'})$ is a cyclic $R^\mathrm{ps}$-module.
Since it is also a faithful $R^\mathrm{ps}$-module, it is free of rank one.
\end{proof}
\end{thm}

\subsubsection{An application of cyclicity}

In this subsection, we deduce a lemma which allows one to use multiplicity one to relate the reduction of lattices to modularity of Serre weights.
We again fix a place $v|p$ of $F^+$ and continue to use the notation of \S \ref{keyiso}.

\begin{lem} \label{red}
Fix a place $v|p$ and let $V_{\tld{v}'}$ be $\cO_E[\GL_3(\cO_{\tld{v}})]$-modules for $v'\neq v$.
Let $\tau$ be a finite locally algebraic $E[\GL_3(\cO_{\tld{v}})]$-module, and let $\tau_1 \subset \tau$ be an $\cO_E$ lattice with irreducible cosocle $\sigma$ where $M_\infty(\sigma) \neq 0$.
Let $R$ be a local $R_\infty$-algebra which acts on $M_\infty(\tau_1)$ extending the action of $R_\infty$.
Suppose that $M_\infty(\tau_1)$ is a cyclic $R$-module and $\tau_2 \subsetneq \tau_1$ is a sublattice.
Then $M_\infty(\tau_2) \subset \mathfrak{m}_R M_\infty(\tau_1)$.
\begin{proof}
The inclusion $\tau_2 \inj \tau_1$ and surjection $\tau_1 \surj \sigma$, whose composition is $0$, induce the diagram
\begin{center}
\begin{tikzcd}
M_\infty(\tau_2) \arrow{r}{} &M_\infty(\tau_1) \arrow{r}{} \arrow{d}{}
&M_\infty(\sigma) \arrow{d}{}\\
&M_\infty(\tau_1)/\mathfrak{m}_R \arrow{r}{} &M_\infty(\sigma)/\mathfrak{m}_R
\end{tikzcd}
\end{center}
where the composition of the top row is $0$.
The map $M_\infty(\tau_1) \rarrow M_\infty(\sigma)$ is surjective by exactness of $M_\infty(\cdot)$, and so the bottom row is surjective.
By assumption, $M_\infty(\tau_1)/\mathfrak{m}_R$ is one dimensional and $M_\infty(\sigma) \neq 0$, and so the bottom row is an isomorphism.
We conclude that the composition $M_\infty(\tau_2) \rarrow M_\infty(\tau_1) \rarrow M_\infty(\tau_1 )/\mathfrak{m}_R$ is $0$.
\end{proof}
\end{lem}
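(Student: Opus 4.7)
My plan is to run a short diagram chase using the exactness of $M_\infty(\cdot)$ and the cyclicity of $M_\infty(\tau_1)$, after first verifying that the composition $\tau_2 \inj \tau_1 \surj \sigma$ vanishes.

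First I would check that the composition $\tau_2 \inj \tau_1 \surj \sigma$ is zero. The reduction $\overline{\tau}_1$ has irreducible cosocle $\sigma$, so every proper $K$-submodule of $\overline{\tau}_1$ is contained in its radical. Because $\tau_2 \subsetneq \tau_1$ is a proper inclusion of full-rank lattices, the image of $\tau_2$ in $\overline{\tau}_1$ is a proper $K$-submodule (otherwise Nakayama would force $\tau_2 = \tau_1$), so its image in the cosocle $\sigma$ is zero.

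Next I would apply $M_\infty(\cdot)$, which is exact because $M_\infty$ is projective as an $\cO_E[[\prod_{v|p}K_v]]$-module by Theorem \ref{patchedmod}(\ref{proj}). In particular $M_\infty(\tau_1) \surj M_\infty(\sigma)$ is surjective, and the composition $M_\infty(\tau_2) \to M_\infty(\tau_1) \to M_\infty(\sigma)$ is zero. Reducing modulo $m$ yields the diagram
\begin{center}
\begin{tikzcd}
M_\infty(\tau_2) \arrow{r} &M_\infty(\tau_1) \arrow{r} \arrow{d} &M_\infty(\sigma) \arrow{d}\\
&M_\infty(\tau_1)/m \arrow[two heads]{r} &M_\infty(\sigma)/m
\end{tikzcd}
\end{center}
where the composition along the top is zero.

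Now I would finish with Nakayama. Since $M_\infty(\tau_1)$ is cyclic, $M_\infty(\tau_1)/m$ has dimension at most one over the residue field. Since $M_\infty(\sigma) \neq 0$, Nakayama gives $M_\infty(\sigma)/m \neq 0$, so the bottom arrow, being a nonzero surjection from a space of dimension at most one, is an isomorphism. Therefore the composition $M_\infty(\tau_2) \to M_\infty(\tau_1)/m$ is zero, which is exactly the assertion $M_\infty(\tau_2) \subset mM_\infty(\tau_1)$.

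There isn't really a hard step here: the only subtle point is the vanishing of $\tau_2 \to \sigma$, which is a one-line Nakayama argument using irreducibility of the cosocle. Everything else is formal from exactness of $M_\infty$ and the hypothesis that $M_\infty(\tau_1)$ is cyclic and $M_\infty(\sigma)$ is nonzero.
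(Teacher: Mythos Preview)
Your proof is correct and follows essentially the same approach as the paper's: both use the vanishing of $\tau_2 \to \sigma$, exactness of $M_\infty(\cdot)$, and cyclicity of $M_\infty(\tau_1)$ together with $M_\infty(\sigma)\neq 0$ to deduce that $M_\infty(\tau_1)/m \to M_\infty(\sigma)/m$ is an isomorphism. The only difference is that you spell out the Nakayama argument for why $\tau_2 \to \sigma$ vanishes, which the paper simply asserts.
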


\section{Lattices in patched modules} \label{lattice}

In this section, we deduce our main theorem, Theorems \ref{mainalg} and \ref{mainps}, on lattices in cohomology.
We again fix a place $v|p$ of $F^+$ and continue to use the notation involving $V_{{\tld{v}}'}$ from \S \ref{keyiso}.

\subsection{Lattices in patched modules for upper alcove algebraic vectors}
Recall the definitions of $W$ and $W^0$ from \S \ref{algvect}, $T$ and $T'$ from \S \ref{heckealg}, and $R^\mathrm{alg}$ from \S \ref{multloc}.

\begin{lem} \label{factoriso}
Suppose that $\rbar$ satisfies {\bf (WE)} for $\mu$ at $v$ for some minuscule coweight $\mu$. Then the $R_\infty$-module homomorphism $T': M_\infty(W) \rarrow M_\infty(W^0)$ induced by $T'\in \mathcal{H}(W,W^0)_\mu$ is an isomorphism.
\begin{proof}
The map $T': M_\infty(\overline{W}) \rarrow M_\infty(\overline{W}^0)$ is an isomorphism by Lemma \ref{main}.
By Nakayama's lemma, $T': M_\infty(W) \rarrow M_\infty(W^0)$ is surjective.

We now prove that $T': M_\infty(W) \rarrow M_\infty(W^0)$ is injective.
Let $I$ be the kernel of the map $M_\infty(W) \rarrow M_\infty(W^0)$.
Suppose that $M_\infty(\overline{W})$ and $M_\infty(\overline{W}^0)$ are nonzero and have support of dimension $d$ as $R_\infty$-modules.
Let $Z_{d}$ be the functor that assigns to an $R^\mathrm{alg}$-module the associated $d$-dimensional cycle (see \cite[Definition 2.2.5]{MR3134019}).
We have \[Z_{d}(M_\infty(W/p^n)) = Z_{d}(M_\infty(W^0/p^n))\]
by exactness of $M_\infty(\cdot)$ from Theorem \ref{patchedmod}(\ref{proj}), the fact that $\overline{W}$ and $\overline{W}^0$ have the same Jordan--H\"older factors, and additivity of $Z_{d}$ in exact sequences.

Let $I_n$ be the image of $I/p^n$ in $M_\infty(W/p^n)$. Then we have the exact sequence
\[ 0 \rarrow I_n \rarrow  M_\infty(W/p^n) \rarrow  M_\infty(W^0/p^n) \rarrow 0.\]
Again by additivity of $Z_{d}$, we see that $Z_{d}(I_n) = 0$ and so $I_n$ has support of dimension less than $d$.
Since $M_\infty(W/p^n)$ is maximal Cohen-Macaulay over its scheme-theoretic support, there are no embedded associated primes by \cite[Theorem 17.3(i)]{MR1011461}, and so $I_n=0$ and the map $I/p^n \rarrow M_\infty(W/p^n)$ must be $0$ for all $n$.
We conclude that the map $I \rarrow M_\infty(W)$ is $0$ and so $I = 0$.
\end{proof}
\end{lem}

Let $V_{v'} = W_{v'}$ or $W^0_{v'}$ for $v' \neq v$ and $v'|p$ a place of $F^+$.
With this fixed choice of $V_{v'}$, we continue to use the notation of \S \ref{keyiso}.
By Theorem \ref{cyclicalg}, we can and do fix an isomorphism $s:R^\mathrm{alg} \isom M_\infty(W^0)$, which also gives an isomorphism of $R_\infty$-modules $(T')^{-1} \circ s: R^\mathrm{alg} \isom M_\infty(W)$ by Lemma \ref{factoriso}.
This induces an $R^\mathrm{alg}$-action on $M_\infty(W)$.
Consider the element \[(M_\infty(W^0) \oplus M_\infty(W),s,(T')^{-1}\circ s) \in \mathcal{M}(R^\mathrm{alg}),\]
with $\mathcal{M}$ the functor defined in \S \ref{subsubsec:moduli}.
Let $ \Lambda:\ZZ_p[x,y]/(xy-p) \rarrow R^\mathrm{alg}$ be the corresponding morphism from Proposition \ref{rep}.

\begin{thm} \label{algfamily}
For each place $v'|p$, let $\mu_{\tld{v}'}$ be some minuscule coweight.
Suppose that $\rbar$ satisfies {\bf(M')}, {\bf(RT)}, and {\bf (WE)} for $\mu_{\tld{v}'}$ at $\tld{v}'$.
Then $\Lambda(y) = s^{-1}\circ T \circ s$. In other words, in the composition $\ZZ_p[x,y]/(xy-p) \rarrow R^\mathrm{alg} \rarrow \End_{R^\mathrm{alg}} (M_\infty(W^0))$, the image of $y$ is $T$.
\begin{proof}
By Proposition \ref{rep}, we have that \[\Lambda(y) = ((T')^{-1} \circ s)^{-1} \bigl(\begin{smallmatrix}
0&0\\ 1&0
\end{smallmatrix} \bigr)
s = s^{-1} \circ T' \circ i \circ s = s^{-1} \circ T \circ s.\]
\end{proof}
\end{thm}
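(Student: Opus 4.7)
My plan is to prove this by directly unpacking the explicit description of the universal map in Proposition \ref{rep} and matching it against the given rigidification of $M_\infty(W^0)\oplus M_\infty(W)$ as an $\mathcal{E}^\mathrm{alg}$-module with $\psi_1 = s$ and $\psi_2 = (T')^{-1}\circ s$. The computation then reduces to identifying how the matrix element $\bigl(\begin{smallmatrix} 0&0\\ 1&0 \end{smallmatrix}\bigr) \in \mathcal{E}^\mathrm{alg}$ acts on the patched module and invoking Proposition \ref{factor}.

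First, I would observe that by additivity and covariant functoriality of $M_\infty(\cdot)$, the ring $\mathcal{E}^\mathrm{alg} = \End_K(W^0\oplus W)$ acts on $M_\infty(W^0\oplus W) = M_\infty(W^0)\oplus M_\infty(W)$, and the matrix element $\bigl(\begin{smallmatrix} 0&0\\ 1&0 \end{smallmatrix}\bigr)$, which corresponds to the inclusion $i\colon W^0 \inj W$ viewed as an endomorphism of $W^0\oplus W$, acts as the map $M_\infty(W^0) \to M_\infty(W)$ obtained by applying $M_\infty(\cdot)$ to $i$. (This is the one place where I would pause to check that the variance convention matches the one used in the definition of $\mathcal{M}$; the patching construction of Section \ref{patching} is set up so that $\mathcal{E}^\mathrm{alg}$ rather than its opposite acts on $M_\infty$, which is exactly what is needed.) I expect this variance check to be the only subtle point—everything else is an essentially formal manipulation.

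Next I would apply the explicit formula from Proposition \ref{rep}: under the rigidifications $\psi_1, \psi_2$, the image of $y$ is $\psi_2^{-1}\circ \bigl(\begin{smallmatrix} 0&0\\ 1&0 \end{smallmatrix}\bigr)\circ \psi_1$. Substituting $\psi_1 = s$ and $\psi_2 = (T')^{-1}\circ s$ from the construction of the element of $\mathcal{M}(R^\mathrm{alg})$ produced just before the theorem, and identifying $\bigl(\begin{smallmatrix} 0&0\\ 1&0 \end{smallmatrix}\bigr)$ with $i\colon M_\infty(W^0)\to M_\infty(W)$, this yields
\[ \Lambda(y) = \bigl((T')^{-1}\circ s\bigr)^{-1} \circ i \circ s = s^{-1}\circ T' \circ i \circ s. \]

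Finally, I would invoke Proposition \ref{factor}, which asserts that $T = T' \circ i$ on $\ind_K^G W^0$, and hence—by functoriality of $M_\infty$—the induced endomorphism of $M_\infty(W^0)$ factors as $T = T'\circ i$. Substituting this into the display above gives $\Lambda(y) = s^{-1}\circ T\circ s$, completing the proof. There is no real obstacle to overcome here: the content of the theorem is the bookkeeping that the tautological coordinate $y$ on the moduli $\mathcal{M}$ of lattices corresponds, through the isomorphisms $s$ and $(T')^{-1}\circ s$, exactly to the Hecke operator $T$ acting on $M_\infty(W^0)$. The real work lies in the earlier constructions (Lemma \ref{factoriso}, Theorem \ref{cyclicalg}, and Proposition \ref{rep}) which make this identification possible.
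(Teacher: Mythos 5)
Your proposal is correct and follows essentially the same route as the paper: apply the explicit formula of Proposition \ref{rep} with $\psi_1 = s$, $\psi_2 = (T')^{-1}\circ s$, identify $\bigl(\begin{smallmatrix} 0&0\\ 1&0 \end{smallmatrix}\bigr)$ with $i$, and conclude via the factorization $T = T'\circ i$ of Proposition \ref{factor}. Your explicit variance check is exactly the point the paper addresses by working with $\mathcal{E}^\mathrm{alg}$-modules rather than $\mathcal{E}^{\mathrm{alg},\mathrm{op}}$-modules in the definition of $\cM$.
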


\begin{thm} \label{mainalg}
Let $F$ be a CM field in which $p$ splits completely.
Let $F^+$ be its totally real subfield and assume that $F/F^+$ is unramified at all finite places.
Let $r: G_F\rarrow \GL_3(\mathcal{O}_E)$ be a Galois representation satisfying {\bf(M)}.
Assume further that
\begin{itemize}
 \item $r$ satisfies {\bf (RT)};
 \item for all places $v|p$ of $F$, $r|_{F_{\tld{v}}}$ is a lattice in a crystalline representation with Hodge--Tate weights $(c_{\tld{v}}+p+1,b_{\tld{v}}+1,a_{\tld{v}}-p+1)$ with $a_{\tld{v}}-b_{\tld{v}}>6$, $b_{\tld{v}}-c_{\tld{v}}>6$, and $a_{\tld{v}}-c_{\tld{v}}<p-5$;
 \item for all places $v|p$ of $F^+$, $r|_{F_{\tld{v}}}$ is irreducible with $\rbar|_{I_{\tld{v}}} \cong \psi \oplus \psi^p \oplus \psi^{p^2}$ where $\psi \cong \omega_3^{-(a_{\tld{v}}+1)-p (b_{\tld{v}}+1) - p^2(c_{\tld{v}}+1)}$;
 \item and the reduction $\rbar:G_F \rarrow \GL_3(k_E)$ satisfies {\bf(TW)}.
\end{itemize}
Let $\lambda_{\tld{v}}$ be the trace of $\varphi$ acting on $D_{\textrm{cris}}(r|_{G_{F_{\tld{v}}}})$.
Let $V_{\tld{v}}$, $W_{\tld{v}}$, and $W^0_{\tld{v}}$ be as in \S \ref{algvect} and $\pi$ as in \S \ref{aut}.
Then the lattice $\otimes_v V_{\tld{v}} \cap \pi \subset \otimes_v V_{\tld{v}}$ is isomorphic to $\otimes_{v|p} (i(W_{\tld{v}}^0) + p^{c_{\tld{v}}+p-1} \lambda_{\tld{v}} W_{\tld{v}}) \subset \otimes_{v|p} (W_{\tld{v}} \otimes_{\ZZ_p} E)$.
\begin{proof}
Let $\theta: R^\mathrm{alg} \rarrow E$ be the map corresponding to $r$, defined as follows.
Recall that in Remark \ref{surjchoice}, we fixed a surjection $R_\infty \surj R_\cS^{\square_S}$.
The composition of this map and the map $R_\cS^{\square_S} \rarrow E$ defining $r$ gives a map $\theta':R_\infty \rarrow E$.
Note that $M_\infty(W)/\ker(\theta')$ is nonzero for some locally algebraic module $W$ by Theorem \ref{patchedmod}(\ref{special}) and axiom {\bf (M)}.
Moreover, $W$ must be isomorphic to $\otimes_v W_{\tld{v}}$ by local-global compatibility at $p$ as in \cite[Theorem 1.2]{MR2954105}.
Hence, the $E$-point given by $\theta'$ is an $E$-point of the scheme-theoretic support of $M_\infty(\otimes_v W_{\tld{v}})$.
In other words, the map $\theta'$ factors through the faithful quotient of $R_\infty$ acting on $M_\infty(\otimes_v W_{\tld{v}})$. 
Let $\mu = \mu_1 = (0,0,1)$.
Again by \cite[Theorem 1.2]{MR2954105}, the double coset operator $\iota_{\tld{v}}^{-1}(\GL_3(\cO_{\tld{v}})t_\mu\GL_3(\cO_{\tld{v}}))$ acts by $\lambda_{\tld{v}}$.
Then $T_{\tld{v}}$ acts on $M_\infty(\otimes_v W_{\tld{v}})/\ker(\theta')$ by $p^{c_{\tld{v}}+p-1} \lambda_{\tld{v}} \in \cO$.
Thus, there is a map $\theta: R^\mathrm{alg} \rarrow E$ extending $\theta'$ such that $\theta(T_{\tld{v}})=p^{c_{\tld{v}}+p-1} \lambda_{\tld{v}}$ by \cite[Corollary 4.4.3]{MR3079258}, \cite[Theorem 1.2]{MR2954105}, Theorem \ref{patchedmod}(\ref{special}), and the unramified local Langlands correspondence.

By \cite[Theorem A]{BLGG}, $\rbar$ satisfies {\bf (M')}.
By Example \ref{ex:we}, $\rbar$ satisfies {\bf (WE)} for $\mu_1$ at all places $v|p$.
By Theorem \ref{algfamily}, the composition $\theta \circ \Lambda$ takes $y_{\tld{v}}$ to $p^{c_{\tld{v}}+p-1} \lambda_{\tld{v}}$.
By Theorem \ref{patchedmod}(\ref{special}) and Remark \ref{schikhof}, $\Hom_K(\otimes_v W_{\tld{v}}^0\oplus \otimes_v W_{\tld{v}},\pi)^d \cong (M_\infty(\otimes_v W_{\tld{v}}^0) \oplus M_\infty(\otimes_v W_{\tld{v}}))\otimes_\theta E$ as $\mathcal{E}$-modules.
We conclude that the $\mathcal{E}^{\textrm{op}}$-module $\Hom_K(\otimes_v W_{\tld{v}}^0\oplus \otimes_v W_{\tld{v}},\pi)$ is isomorphic to the $\mathcal{E}^{\textrm{op}}$-module corresponding to the lattice $\otimes_{v|p} (i(W_{\tld{v}}^0) + p^{c_{\tld{v}}+p-1} \lambda_{\widetilde{v}} W_{\tld{v}}) \subset \otimes_{v|p} (W_{\tld{v}} \otimes_{\ZZ_p} E)$ for each place $v|p$.
\end{proof}
\end{thm}

\begin{remark} \label{wlog}
If instead $\rbar|_{I_{\tld{v}}} \cong \psi \oplus \psi^p \oplus \psi^{p^2}$ where $\psi \cong \omega_3^{-(a_{\tld{v}}+1)-p (c_{\tld{v}}+1) - p^2 (b_{\tld{v}}+1)}$, then a similar result holds using the Hecke operator corresponding to $\mu = (0,1,1)$.
\end{remark}

\begin{corr}\label{corr:val}
Keep the assumptions of Theorem \ref{mainalg}. Then $-c_{\tld{v}}-p+1 < \val(\lambda_{\tld{v}}) < -c_{\tld{v}}-p+2$.
\begin{proof}
Note that the first inequality follows from \cite[Corollary 4.4.3 and Proposition 4.5.2]{MR3079258}.
Let $W$ and $W^0$ be $W_{\tld{v}}$ and $W^0_{\tld{v}}$, respectively, and let $V_{\tld{v'}}$ be $W_{\tld{v}'}$ in the notation of \S \ref{cyclic}.
Lemma \ref{red} shows that $i(M_\infty(W^0)) \subset \mathfrak{m}_{R_\infty} M_\infty(W)$.
Thus $T(M_\infty(W)) \subset \mathfrak{m} M_\infty(W)$, and we deduce from the proof of Theorem \ref{mainalg} that $p^{c_{\tld{v}}+p-1}\lambda_{\tld{v}}\in \varpi \cO_E$.
On the other hand, Proposition \ref{alglat}, Lemma \ref{extcyclic}, and Lemma \ref{red} show that $pM_\infty(W) \subset \mathfrak{m} i(M_\infty(W^0))$.
We deduce that $pM_\infty(W) \subset \mathfrak{m} T(M_\infty(W))$, and that $p^{-c_{\tld{v}}-p+2}\lambda_{\tld{v}}^{-1} \in \varpi_E \cO_E$.
\end{proof}
\end{corr}

\subsection{Lattices in patched modules for principal series types} \label{latticeps}

In this section, we prove Theorem \ref{mainthm} in the case of principal series types.
We keep the notation of \S \ref{intps} and \S \ref{subsec:pstype} such as $\tau_{{\tld{v}}'}^s$ for $s\in S_3$ and $\tau^i_{{\tld{v}}'}$ for $i = 1,2,3$.
We continue to assume that $\rbar$ satisfies {\bf (M')}, {\bf (RT)}, and {\bf (WE)} for $\taubar_{{\tld{v}}'}$ and $\tld{v}'$ for all $v'|p$.

Fix a place $v|p$ of $F^+$.
Fix $s_{v'}\in S_3$ for places $v'|p$ of $F^+$ with $v' \neq v$.
From now on we denote $M_\infty(V_{\tld{v}'} \otimes \otimes_{v'\neq v} \tau^{s_{v'}}_{\tld{v}'})$ by $M_\infty(V)$ where $V$ is a $\GL_3(\cO_{\tld{v}})\cong \GL_3(\ZZ_p)$-representation.

\begin{prop} \label{tilt}
The map $\iota_{r_js_2}^{s_is_2} : M_\infty(\tau^{r_js_2}) \rarrow M_\infty(\tau^{s_is_2})$ is an isomorphism.
\begin{proof}
The map is injective by Theorem \ref{patchedmod}(\ref{proj}).
By Proposition
\ref{submodule} and {\bf (WE)} for $\tau_v$, the cokernel of $\iota_{r_js_2}^{s_is_2}: \taubar^{r_js_2} \inj \taubar^{s_is_2}$ contains no Serre weights in $W_v(\rbar)$. 
By Proposition \ref{support}, the cokernel of
$\iota_{r_js_2}^{s_is_2}: M_\infty(\taubar^{r_js_2}) \rarrow M_\infty(\taubar^{s_is_2})$ is $0$. By Nakayama's lemma, the cokernel of $\iota_{r_js_2}^{s_is_2}: M_\infty(\tau^{r_js_2}) \inj M_\infty(\tau^{s_is_2})$ is $0$.
\end{proof}
\end{prop}

The following propositions are proved similarly.

\begin{prop} \label{shadow}
The induced map $\iota_2^{s_2}: M_\infty(\tau^2) \inj M_\infty(\tau^{s_2})$ is an isomorphism.
\end{prop}

For $i = 1,2,3$, let $\iota^i: \tau^{s_1}\oplus\tau^{w_0} \rarrow \tau^i$ denote the sum of $\iota_{s_1}^i$ and $\iota_{w_0}^i$.

\begin{prop} \label{othershadow}
For $i = 1,3$, the map $\iota^i: M_\infty(\tau^{s_1}\oplus\tau^{w_0}) \rarrow M_\infty(\tau^i)$ is surjective.
\end{prop}

Using Theorem \ref{cyclicps}, we can and do choose an isomorphism $\psi^{s_2}: R^\mathrm{ps} \isom M_\infty(\tau^{s_2})$.
The compositions $\psi^{r_i{s_2}} := \nu^{r_i{s_2}}_{s_2} \circ \psi^{s_2}: R^\mathrm{ps} \isom M_\infty(\tau^{s_2}) \isom M_\infty(\tau^{r_i{s_2}})$ are also isomorphisms by Proposition \ref{normalizer}.

For each $i$, Proposition \ref{tilt} gives an isomorphism $\psi^{s_is_2} := \iota^{s_is_2}_{r_js_2} \circ \psi^{r_j{s_2}}: R^\mathrm{ps} \isom M_\infty(\tau^{r_j{s_2}}) \isom M_\infty(\tau^{s_i{s_2}})$. This also gives an isomorphism 
\[\psi^{w_0s_2} := \nu^{w_0s_2}_{s_1s_2} \circ \psi^{s_1s_2} : R^\mathrm{ps} \isom M_\infty(\tau^{w_0s_2}).\]
Through these isomorphisms, we endow $M_\infty(\otimes_{v'} \tau^{s_{v'}}_{v'})$ with the structure of an $R^{\mathrm{ps}}$-module for all choices of $(s_{v'})_{v'}$.

\begin{lem}\label{psfamily}
We have the following identities:
\begin{enumerate}
\item $(\psi^{s_2})^{-1} \circ \iota^{s_2}_{r_is_2} \circ \psi^{r_is_2}=(\psi^{s_2})^{-1}U_j\psi^{s_2}$;
\item $(\psi^{s_2})^{-1} \circ \iota^{s_2}_{s_is_2} \circ \psi^{s_is_2}=(\psi^{s_2})^{-1}U_i\psi^{s_2}$; and
\item $(\psi^{s_2})^{-1} \circ \iota^{s_2}_{w_0s_2} \circ \psi^{w_0s_2}=(\psi^{s_2})^{-1}U_1\psi^{s_2} \cdot (\psi^{s_1s_2})^{-1}U_2\psi^{s_1s_2}$.
\end{enumerate}
\begin{proof}
For (1), we note that
\[(\psi^{s_2})^{-1} \circ \iota^{s_2}_{r_is_2} \circ \psi^{r_is_2}
= (\psi^{s_2})^{-1} \circ \iota^{s_2}_{r_is_2} \circ \nu^{r_i{s_2}}_{s_2} \circ \psi^{s_2}
= (\psi^{s_2})^{-1} U_j \psi^{s_2}\]
using Proposition \ref{uop}.
For (2), Proposition \ref{intercomp} gives
\begin{align*}
(\psi^{s_2})^{-1} \circ \iota^{s_2}_{s_is_2} \circ \psi^{s_is_2}
= (\psi^{s_2})^{-1} \circ \iota^{s_2}_{s_is_2} \circ \iota^{s_is_2}_{r_js_2} \circ \psi^{r_j{s_2}}
= (\psi^{s_2})^{-1} \circ \iota^{s_2}_{r_js_2} \circ \psi^{r_js_2},
\end{align*}
which is $(\psi^{s_2})^{-1}U_i\psi^{s_2}$ by (1).
For (3),
\begin{align*}
(\psi^{s_2})^{-1} \circ \iota^{s_2}_{w_0s_2} \circ \psi^{w_0s_2}
&=
(\psi^{s_2})^{-1} \circ \iota^{s_2}_{s_1s_2} \iota_{w_0s_2}^{s_1s_2} \circ \psi^{w_0s_2}
\\
&=
(\psi^{s_2})^{-1} \circ \iota^{s_2}_{s_1s_2} \circ \psi^{s_1s_2} (\psi^{s_1s_2})^{-1} \iota_{w_0s_2}^{s_1s_2} \circ \psi^{w_0s_2}
\\
&=
(\psi^{s_2})^{-1}U_1\psi^{s_2} \cdot (\psi^{s_1s_2})^{-1}U_2\psi^{s_1s_2},
\end{align*}
where the first equality uses Proposition \ref{intercomp} and the third equality uses (2) and the fact that $(\psi^{s_1s_2})^{-1} \iota_{w_0s_2}^{s_1s_2} \circ \psi^{w_0s_2} = (\psi^{s_1s_2})^{-1}U_2\psi^{s_1s_2}$ by an argument similar to the one for (1).
\end{proof}
\end{lem}

\begin{prop} \label{localglobal}
Let $\alpha_1$, $\alpha_2$, and $\alpha_3$ be the eigenvalues of $\varphi$ acting on the $\eta^{a_v}$, $\eta^{b_v}$, and $\eta^{c_v}$-eigenspaces, respectively, of $\mathrm{WD}(r^\vee|_{G_{F_v}})^{\mathrm{F}-\mathrm{ss}}$.
Then $U_1$ and $U_2$ act on $\Hom_{\GL_3(\cO_v)}(\tau^s_v, \pi)$ by $\alpha_{s(1)}$ and $\alpha_{s(1)}\alpha_{s(2)}/p$, respectively.
\begin{proof}
This follows from the first two paragraphs of the proof of \cite[Theorem 4.5.2]{HLM}.
\end{proof}
\end{prop}


\begin{thm} \label{mainps}
Let $F$ be a CM field in which $p$ splits completely.
Let $F^+$ be its totally real subfield and assume that $F/F^+$ is unramified at all finite places.
Let $r: G_F\rarrow \GL_3(\mathcal{O}_E)$ be a Galois representation satisfying {\bf(M)}. Assume further that
\begin{itemize}
 \item $r$ satisfies \emph{(}{\bf RT}\emph{)};
 \item for all places $v'|p$ of $F^+$, $r|_{F_{{\tld{v}}'}}$ is a lattice in a potentially crystalline representation with Hodge--Tate weights $(0,1,2)$ with tame type $\eta^{-a_{{\tld{v}}'}} \oplus \eta^{-b_{{\tld{v}}'}} \oplus \eta^{-c_{{\tld{v}}'}}$ where $a_{\tld{v}}-b_{\tld{v}}>6$, $b_{\tld{v}}-c_{\tld{v}}>6$, and $a_{\tld{v}}-c_{\tld{v}}<p-5$;
 \item for all places $w|p$ of $F$, $\rbar|_{F_{\tld{v}}}$ is irreducible with $\rbar|_{I_{\tld{v}}} \cong \psi \oplus \psi^p \oplus \psi^{p^2}$ where $\psi \cong \omega_3^{-(a_{\tld{v}}+1)-p (b_{\tld{v}}+1)- p^2(c_{\tld{v}}+1)}$;
 \item and its reduction $\rbar:G_F \rarrow \GL_3(k_E)$ satisfies \emph{(}{\bf TW}\emph{)}.
\end{itemize}
Let $\tau_{\tld{v}'}$, $\tau^s_{\tld{v}'}$, etc.~be as in \S \ref{intps} and $\pi$ as in \S \ref{aut}.
Then the lattice $(\otimes_{v'} \tau_{\tld{v}'} \otimes_{\ZZ_p} E) \cap \pi \subset \otimes_{v'} \tau_{\tld{v}'} \otimes_{\ZZ_p} E$ is isomorphic to a tensor product of lattices with factor at $v$ given by
\begin{align*}
\tau^{s_2} &+ 
(\alpha_1\alpha_2/p)^{-1} \iota_{e}^{s_2}(\tau^{e}) + 
\alpha_2^{-1} \iota_{r_1}^{s_2}(\tau^{r_1}) \\
&+ (\alpha_1\alpha_2\alpha_3/p)^{-1} \iota_{r_2}^{s_2} (\tau^{r_2}) + 
(\alpha')^{-1} \iota_1(\tau^1) + (\alpha')^{-1} \iota_3(\tau^3),
\end{align*}
where $\alpha'$ is the one of $\alpha_1 \alpha_2/p$ and $\alpha_2$ which has smaller valuation.
\begin{proof}
Let $\theta: R^\mathrm{ps} \rarrow \cO_E$ be the map corresponding to $r$, defined as follows.
Recall that by Remark \ref{surjchoice}, we chose a surjection $R_\infty \surj R_\cS^{\square_S}$.
The composition of this map and the map $R_\cS^{\square_S} \rarrow \cO_E$ defining $r$ gives a map $\theta':R_\infty \rarrow \cO_E$.
Let $\tau_{\tld{v}}$ be defined as in \S \ref{subsec:pstype}.
The map $\theta'$ factors through the faithful quotient of $R_\infty$ acting on $M_\infty(\otimes_v \tau^{s_2}_{\tld{v}})$ since $M_\infty(\otimes_v \tau^{s_2}_{\tld{v}})/\ker(\theta')$ is nonzero by Theorem \ref{patchedmod}(\ref{special}), axiom {\bf (M)}, local-global compatibility at $p$ as in \cite[Theorem 1.2]{MR2954105}, and inertial local Langlands as in \cite[Proposition 2.4.1(ii)]{MR3079258}.
We extend $\theta'$ to $\theta$ using Proposition \ref{localglobal}.

By \cite[Lemma 4.1.2]{1305.1594}, the factor at $v$ of the lattice $(\otimes_{v'|p} \tau_{\tld{v}'} \otimes_{\ZZ_p} E) \cap \pi$ takes the form
\begin{align*}
\tau^{s_2} + 
&(\alpha^{r_1})^{-1} \iota_{r_1}^{s_2}(\tau^{r_1}) + 
(\alpha^{r_2})^{-1} \iota_{r_2}^{s_2}(\tau^{r_2}) + 
(\alpha^{s_1})^{-1} \iota_{s_1}^{s_2}(\tau^{s_1})\\ + 
&(\alpha^{w_0})^{-1} \iota_{w_0}^{s_2}(\tau^{s_2}) +
(\alpha^{e})^{-1} \iota_e^{s_2} (\tau^e) + 
\alpha' \iota_1(\tau^1) + \alpha'' \iota_2(\tau^2) + \alpha''' \iota(\tau^3)
\end{align*}
for some constants $\alpha^\bullet$.
As in the proof of \ref{mainalg}, $\rbar$ satisfies {\bf (M')} and {\bf (WE)} for $\tau_{\tld{v}}$ and $\tld{v}$ at all places $v|p$.
Thus, we choose isomorphisms $\psi^\bullet$ as before.
By Proposition \ref{patchedmod}(\ref{special}), the isomorphisms $\psi^{s}$ induce identifications $\Hom_{\prod_{v'} \UU(\cO_v')} (\tau^{s} \otimes \otimes_{v' \neq v} \tau^{s_{v'}}_{\tld{v}'},\pi) \isom \cO_E$ via $\otimes_\theta \cO_E$ and Schikhof duality.

By Lemma \ref{psfamily}, the map 
\[\cO_E \cong \Hom_{\prod_{v'} \UU(\cO_v')} (\tau^{s_2} \otimes  \otimes_{v' \neq v} \tau^{s_{v'}}_{\tld{v}'},\pi) \rarrow \Hom_{\prod_{v'} \UU(\cO_v')} (\tau^{e} \otimes \otimes_{v' \neq v} \tau^{s_{v'}}_{\tld{v}'},\pi) \cong \cO_E\]
induced by $\iota_{e}^{s_2}$ is multiplication by $\alpha_1 \alpha_2/p$; and so $\alpha^{e} = \alpha_1 \alpha_2/p$.
The values for $\alpha^{r_1}$, $\alpha^{r_2}$, $\alpha^{s_1}$, and $\alpha^{w_0}$ are obtained similarly.

We deduce that $\alpha''$ can be taken to be $1$ by Proposition \ref{shadow}.
By Proposition \ref{othershadow}, for $i = 1,3$, either the map 
\begin{equation} \label{noncyclic1}
\Hom_{\prod_{v'} \UU(\cO_v')} (\tau^i \otimes \otimes_{v' \neq v} \tau^{s_{v'}}_{\tld{v}'},\pi) \rarrow \Hom_{\prod_{v'} \UU(\cO_v')} (\tau^{s_1} \otimes \otimes_{v' \neq v} \tau^{s_{v'}}_{\tld{v}'},\pi)
\end{equation}
or the map
\begin{equation} \label{noncyclic2}
\Hom_{\prod_{v'} \UU(\cO_v')} (\tau^i \otimes \otimes_{v' \neq v} \tau^{s_{v'}}_{\tld{v}'},\pi) \rarrow \Hom_{\prod_{v'} \UU(\cO_v')} (\tau^{w_0} \otimes \otimes_{v' \neq v} \tau^{s_{\tld{v}'}}_{\tld{v}'},\pi)
\end{equation}
is an isomorphism.
As the maps $\iota_{w_0}^{s_2}$ and $\iota_{s_1}^{s_2}$ factor through $\iota^i$, we see that $\alpha'$ and $\alpha'''$ can be taken to be one of $\alpha_2$ and $\alpha_1\alpha_2/p$.
Finally, $\val(\alpha'), \val(\alpha''') \leq \min(\val(\alpha_2), \val(\alpha_1\alpha_2/p))$ since the maps in (\ref{noncyclic1}) (resp. (\ref{noncyclic2})) are given by multiplication by the quotient of $\alpha_1\alpha_2/p$ (resp. $\alpha_2$) by $\alpha'$ and $\alpha'''$, which must be elements of $\cO_E$.
We conclude that $\alpha'$ and $\alpha'''$ can be taken to be the one of $\alpha_2$ and $\alpha_1 \alpha_2/p$ which has smaller valuation.
This yields the desired formula.
\end{proof}
\end{thm}

\begin{remark}
As in Remark \ref{wlog}, the case $\rbar|_{I_{\tld{v}}} \cong \psi \oplus \psi^p \oplus \psi^{p^2}$ where $\psi \cong \omega_3^{-(a_{\tld{v}}+1)-p (c_{\tld{v}}+1)- p^2(b_{\tld{v}}+1)}$ is treated similarly.
\end{remark}

\bibliographystyle{amsalpha}

\newcommand{\etalchar}[1]{$^{#1}$}
\providecommand{\bysame}{\leavevmode\hbox to3em{\hrulefill}\thinspace}
\providecommand{\MR}{\relax\ifhmode\unskip\space\fi MR }
\providecommand{\MRhref}[2]{%
  \href{http://www.ams.org/mathscinet-getitem?mr=#1}{#2}
}
\providecommand{\href}[2]{#2}

{\footnotesize \textit{E-mail address:} {\tt le@math.toronto.edu}}

\vskip 0.2cm

 {\footnotesize \sc Department of Mathematics, The University of Toronto}

\end{document}